\newtheorem{theorem}{Theorem}[section]
\newtheorem{lemma}[theorem]{Lemma}
\newtheorem{corollary}[theorem]{Corollary}
\newtheorem{proposition}[theorem]{Proposition}
\theoremstyle{definition}
\newtheorem{definition}[theorem]{Definition}
\newtheorem{assu}[theorem]{Assumption}
\newtheorem{exmp}[theorem]{Example}
\newtheorem{rem}[theorem]{Remark}
\theoremstyle{remark}
\numberwithin{equation}{section}
\newcommand{\field}[1]{\mathbb{#1}}
\newcommand{\R}{\field{R}}
\newcommand{\C}{\field{C}}
\newcommand{\bbW}{\field{W}}
\DeclareMathOperator{\dom}{dom}    
\DeclareMathOperator{\dist}{dist}  
\DeclareMathOperator{\supp}{supp}  
\DeclareMathOperator{\sign}{sign}  
\DeclareMathOperator{\dive}{div}   
\newcommand{\abs}[1]{\lvert#1\rvert}
\newcommand{\norm}[1]{\lVert#1\rVert}
\newcommand{\calL}{\mathcal{L}}
\newcommand{\calA}{\mathcal{A}}
\newcommand{\calM}{\mathcal{M}}
\newcommand{\calP}{\mathcal{P}}
\newcommand{\calQ}{\mathcal{Q}}
\newcommand{\bbA}{\field{A}}
\newcommand{\dx}{\,\mathrm{d} x}
\newcommand{\wt}{\widetilde}
\newcommand{\bfC}{\mathbf{C}}
\newcommand{\bbH}{\field{H}}
\DeclareMathOperator{\dskew}{skew}
\DeclareMathOperator{\axl}{axl}
\begin{document}

\author{Robert Haller-Dintelmann}
\address{Technische Universit\"at Darmstadt, Fachbereich
Mathematik, Schlossgartenstr.\@ 7, D-64298 Darmstadt, Germany}
\email{haller@mathematik.tu-darmstadt.de}
	
\author{Alf Jonsson}
\address{Umea universitet SE-901 87 Umea Sverige}
\email{alf.jonsson@math.umu.se}

\author{Dorothee Knees}
\address{Weierstrass Institute for Applied Analysis and Stochastics,
 Mohrenstr.\@ 39, D-10117 Berlin, Germany}
\email{dorothee.knees@wias-berlin.de}

\author{Joachim Rehberg}
\address{Weierstrass Institute for Applied Analysis and Stochastics,
 Mohrenstr.\@ 39, D-10117 Berlin, Germany}
\email{rehberg@wias-berlin.de}

\title[Elliptic and parabolic regularity]{Elliptic and parabolic regularity for
second order divergence operators with mixed boundary conditions}			

\subjclass[2010]{
35B65, 
35J47, 
35J57, 
74B05
}	
\keywords{mixed boundary conditions, interpolation, elliptic
  regularity for equations and systems, analytic semigroups } 
%
\begin{abstract}
We study second order equations and systems on non-Lipschitz domains including mixed boundary
conditions. The key result is interpolation for suitable function spaces. From this, 
elliptic and parabolic regularity results are deduced by means of Sneiberg's isomorphism theorem.
\end{abstract}
\maketitle

\section{Introduction}
In this paper we first establish interpolation properties for function spaces
that are related to mixed boundary value problems. Afterwards, from this and a
fundamental result of Sneiberg \cite{snei} (cf. also \cite{vigna}) we deduce 
elliptic and parabolic regularity results for second order, divergence operators.

In recent years it became manifest that the appearance of mixed boundary
conditions is not the exception when modelling real world problems, but more
the rule. For instance, in semiconductor theory, models with only pure
Dirichlet or pure Neumann conditions are meaningless, see \cite{selberherr84}.

One geometric concept, which proved of value for the analysis of mixed boundary
value problems, is that introduced by Gr\"oger in \cite{groeger89} (compare
also \cite{mitr} and references therein). It demands, roughly speaking, that
the domain $\Omega$ under consideration is a Lipschitz domain and that the 'Dirichlet
part' $D \subset \partial \Omega$ of the boundary is locally separated from the
rest by a Lipschitzian hypersurface within $\partial \Omega$. Within this
geometric framework, several properties for differential operators,
well-known from smooth constellations, were re-established. This concerns
elliptic regularity (in particular H\"older continuity) \cite{groeger89},
\cite{griehoel}, \cite{grie/reck}, \cite{h/m/r/s}, maximal parabolic regularity
\cite{griemax}, \cite{hal/reh}, \cite{hal/reh1} and interpolation \cite{ggkr}.

In this paper, we impose more general conditions on the domain and on the
Dirichlet boundary part $D$; notably, we dispense the Lipschitz property of the
domain. In particular, the domain may touch itself from outside, see the
examples in Figures~\ref{fig-Meissel-1} and \ref{fig-Meissel} below that are
included in our framework. Note that the situation in Figure~\ref{fig-Meissel}
 is not an artificial one:
the reader may think of a body for which $\Sigma$ and the two striped areas
form an extremely thin, but highly conducting contact $D$, to which an external
 source (e.g.\@ heat or electrical) is applied. If the body is formed by a much less conducting
material, the distribution of heat/charge within the body is subject of an
elliptic/parabolic equation with Dirichlet conditions on $D$.

\begin{figure}
\begin{minipage}[t]{.47\textwidth}
 \centerline{\includegraphics[scale=0.7]{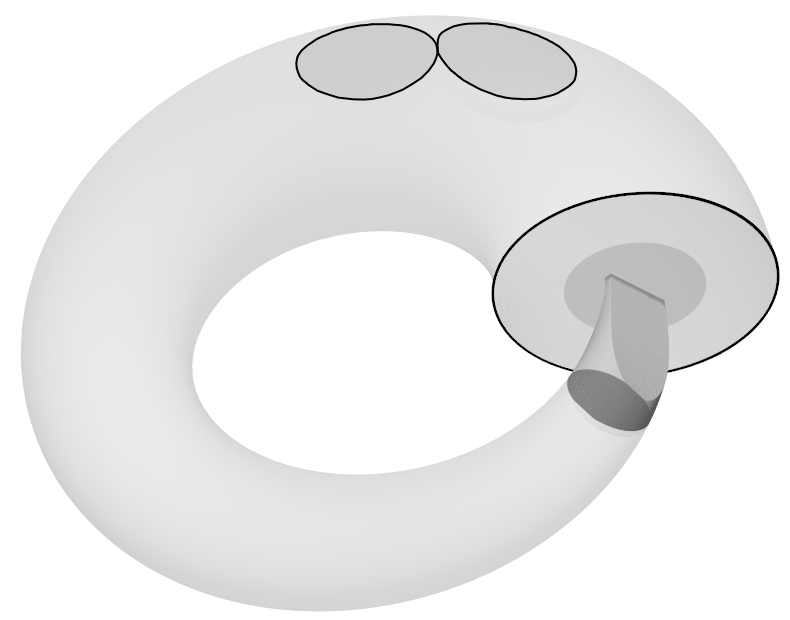}}
 \caption{ A geometric non-Lipschitzian setting which fulfills our assumptions,
	if the grey apex and the three shaded circles carry the Dirichlet
	condition.}
 \label{fig-Meissel-1}
\end{minipage}
\begin{minipage}[t]{.47\textwidth}
  \centerline{\includegraphics[scale=.55]{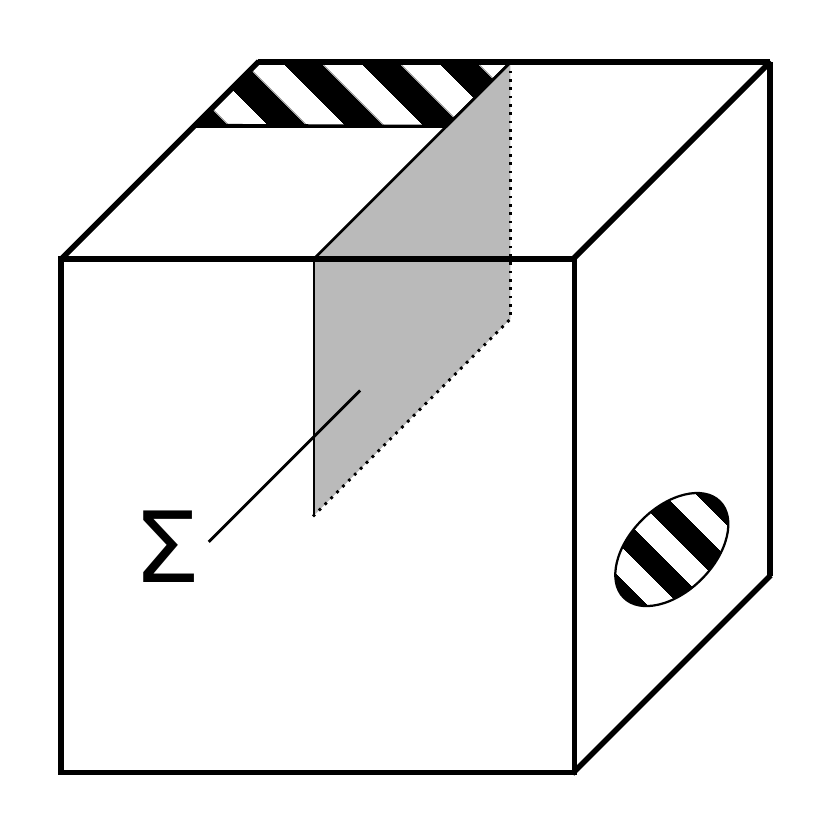}}
  \caption{The set $\Sigma$ does not belong to $\Omega$, and carries -- together
	with the striped parts -- the Dirichlet condition.}
  \label{fig-Meissel}
\end{minipage}
\end{figure}

Our geometric framework is the following. The Dirichlet boundary part $D$ only
has to be a $(d-1)$-set in the sense of Jonsson/Wallin. This can be seen as an
-- extremely weak -- compatibility condition between $D$ 
and $\partial \Omega \setminus D$. For the complement of the Dirichlet boundary
part, the crucial feature is the local extendability of Sobolev functions.
Within this geometrical framework we prove the following: the spaces
$W^{1,p}_D(\Omega)$ (cf. Definition \ref{d-1}) $ p \in ]1,\infty[$ interpolate
according to the same rules as if one \emph{formally} replaces the domain
$\Omega$ by a ball $B$ and the boundary part $D \subset \partial \Omega$
by the empty set (compare \cite[Ch.~4.3.1]{triebel}).
Based on the interpolation results we reproduce Gr\"oger's elliptic regularity
result from \cite{groeger89}, namely that an arbitrary elliptic divergence operator
$-\nabla \cdot \mu \nabla + 1$ provides a topological isomorphism between a
space $W^{1,p}_D(\Omega)$ and $W^{-1,p}_D(\Omega)$ for $p$ close to $ 2$ -- but now
for a much broader class of domains and Dirichlet boundary parts.
Let us emphasize that the -- matrix valued -- coefficient function $\mu$ of the
operator needs only be bounded and elliptic, cf.\@ Assumption~\ref{a-coeff} below.
Note that the main result from \cite{groeger89} was used in some tens of papers in
order to treat (mostly two dimensional) problems, stemming from real world
applications. Having this regularity result at hand, we succeed in proving
that divergence operators of this type generate analytic semigroups on spaces
$W^{-1,p}_D(\Omega)$, as long as $p$ is chosen close to $2$. Clearly, this can
serve as the adequate instrument for the treatment of corresponding parabolic
problems, compare e.g.\@ \cite{amannteubner}, \cite[Ch.~2]{henry}, \cite{luna}.

One of our main technical tools is the version of the now classical
restriction/extension theorem of Jonsson/Wallin (\cite[Ch.~V.1 Thm.1]{jons})
for the limit case of Lipschitz functions, see Proposition \ref{p-JW} below.

Throughout we stick to the condition that $D$ is a $d-1$-set, which in several
instances can in fact be weakened. Since our motivation for this paper comes
from the applications, our aim is to describe a very general but nevertheless
easily accessible geometric constellation that allows to deduce our results.

The outline of the paper is as follows: in the next section we introduce some
preliminaries. In Section~\ref{sec3} we reproduce interpolation within the
family of spaces $\{W^{1,p}_D(\Omega)\}_{p \in {]1,\infty[}}$, and, as a
consequence, in $\{W^{-1,p}_D(\Omega)\}_{p \in ]1,\infty[}$. Rather
unexpectedly, this follows directly from the results of Jonsson/Wallin,
combined with a classical interpolation principle for complemented subspaces
and the existence of an extension operator $\mathfrak E : W^{1,p}_D(\Omega) \to
W^{1,p}_D(\R^d)$, which is uniform in $p$.

Since the existence of an extension operator is thus crucial for our approach,
in Section~\ref{extens} we first establish construction principles for
extension operators. These, together with our conditions on the geometry of
$\Omega$ near $\partial \Omega \setminus D$, then indeed assure their
existence. A simple 'pre-processing', which essentially improves the
applicability of our setting, is described in Lemmas~\ref{l-modigeb} and
\ref{l-exten0}. It allows to pass from the original domain $\Omega$ to another
domain $\Omega_\bullet \subseteq \overline \Omega$ whose boundary is smaller
and, in most cases, a more regular one. It is exactly this what enables also
the treatment of geometric settings like in Figure~\ref{fig-Meissel}, compare
\cite{mitrea} for a similar, but different approach -- there even applied to
higher order Sobolev spaces.
Section \ref{sec4} contains the above mentioned elliptic and parabolic 
regularity results. 
In Section~\ref{sec_systems} we extend the discussion to a class of elliptic
systems comprising the equations for linear elasticity and for Cosserat models.
Relying on the interpolation results it is shown that the corresponding
differential operators provide topological isomorphisms between
$\bbW_D^{1,p}(\Omega)$ and $\bbW_D^{-1,p}(\Omega)$ for suitable $p>2$.
Moreover, under an additional symmetry assumption on the coefficient tensor,
uniform estimates are derived for classes of coefficient tensors satisfying
certain uniform bounds. Since in the case of systems the coercivity of the
operator not necessarily entails the positivity of the coefficient tensor, the
pointwise arguments from \cite{groeger89} have to be modified and transferred
to arguments dealing with the whole operator. In this way also the results from
\cite{HMW11} are extended to more general geometric situations.

Finally, in Section~\ref{sec-appl}, we point out a broad class of possible
applications for our regularity results.

\section{Notation, Preliminaries}

If $X$ and $Y$ are two Banach spaces, then we use the symbol $\mathcal L(X;Y)$
for the  space of linear, continuous operators from $X$ to $Y$. In case of
$X=Y$ we abbreviate $\mathcal L(X)$.

We are now going to impose the adequate condition on the Dirichlet boundary
part $D$. For this we first recall the notion of an $l$-set,
cf.\@ Jonsson/Wallin \cite[II.1.1/2]{jons}.

\begin{definition} \label{d-lset} Assume $0 < l \le d$.
Let $M \subset \R^d$ be closed and $\rho$ the restriction of the $l$-dimensional
Hausdorff measure $\mathcal H_{l}$ to $M$. Then $M$ is called an $l$-set, if there 
exist two positive constants $c_1, c_2$ that satisfy
\begin{equation} \label{e-0}
 c_1 r^l \le \rho \bigl( B(\mathrm x,r) \cap M \bigr) \le c_2 r^l, \quad
	\mathrm x \in M, r \in {]0,1[},
\end{equation}
where $B(\mathrm x,r)$ is the ball with center $\mathrm x$ and radius $r$ in
$\R^d$.
\end{definition}

\begin{assu} \label{a-1}
 Let $\Omega \subset \R^d$ always be a bounded domain and let $\Gamma$ be an
 open part of $\partial \Omega$, such that $D := \partial \Omega \setminus
 \Gamma$ is a $(d-1)$-set.
\end{assu}

We now define the adequate Sobolev space of first order that reflects the
Dirichlet condition.

\begin{definition} \label{d-1}
Let $\Lambda \subset \R^d$ be a domain and let $F$ be a closed subset of
$\overline{\Lambda}$. Then we define
\begin{equation} \label{e-closuresp}
 C^\infty_F(\Lambda) := \{\psi|_\Lambda : \psi \in C^\infty(\R^d),
	\ \supp(\psi) \cap F =\emptyset \}.
\end{equation}
Moreover, for $p \in [1,\infty[$, we denote the closure of
$C^\infty_F(\Lambda)$ in $W^{1,p}(\Lambda)$ by $W^{1,p}_F(\Lambda)$.
\end{definition}
In particular, the set $F$ may be identical with the boundary part $D$.\\
Since the ultimate instrument for almost everything in the next section is a
classical result of Jonsson/Wallin (see \cite[Ch.~VII]{jons}) we quote this
here for the convenience of the reader.
\begin{proposition} \label{p-JW}
 Let $F \subset \R^d$ be closed and, additionally, a $(d-1)$-set. 
 \begin{enumerate} 
 \item There is a continuous restriction operator $\mathcal R_F$ which maps
	every space $W^{1,p}(\R^d)$ continuously onto the Besov space
	$B^{1-\frac {1}{p}}_{p,p}(F)$ as long as $p \in {]1,\infty[}$.
 \item Conversely, there is an extension operator $\mathcal E_F$ which maps each
	space $B^{1-\frac {1}{p}}_{p,p}(F)$ continuously into $W^{1,p}(\R^d)$,
	provided $p \in {]1,\infty[}$.
 \item By construction, $\mathcal E_F$ is a right inverse for $\mathcal R_F$,
	i.e.\@ $\mathcal R_F \mathcal E_F$ is the identity operator on
	$B^{1-\frac {1}{p}}_{p,p}(F)$, cf.\@ \cite[Ch.V.1.3]{jons}.
 \end{enumerate}
\end{proposition}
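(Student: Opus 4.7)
This is the classical Besov trace/extension theorem for $(d-1)$-sets due to Jonsson--Wallin, and the plan is simply to follow the construction from \cite[Ch.~V.1, Ch.~VII]{jons}; I outline the main ingredients. The only property of $F$ that enters is the Ahlfors regularity \eqref{e-0}, and the Besov space $B^{1-1/p}_{p,p}(F)$ is taken in its intrinsic form, defined by the double integral
\[
 \|g\|_{B^{1-1/p}_{p,p}(F)}^{p} \sim \|g\|_{L^p(F,\rho)}^{p} + \int_F \int_F \frac{|g(\mathrm{x})-g(\mathrm{y})|^{p}}{|\mathrm{x}-\mathrm{y}|^{d-1+(p-1)}}\, d\rho(\mathrm{x})\, d\rho(\mathrm{y}).
\]

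For (i), I would define $\mathcal{R}_F u$ at $\rho$-almost every $\mathrm{x} \in F$ as the limit of the ball averages $|B(\mathrm{x},r)|^{-1}\int_{B(\mathrm{x},r)} u\, d\mathrm{y}$ as $r\to 0$. The Lebesgue-point argument for $W^{1,p}(\R^d)$ shows this limit exists outside a set of $p$-capacity zero, and the upper bound in \eqref{e-0} forces that exceptional set to be $\rho$-null. To obtain the Besov estimate one represents $(\mathcal{R}_F u)(\mathrm{x}) - (\mathcal{R}_F u)(\mathrm{y})$ as a telescoping sum of dyadic ball averages of $|\nabla u|$ on a chain of balls joining $\mathrm{x}$ and $\mathrm{y}$; the upper Ahlfors bound then dominates the double $\rho$-integral above by a $d$-dimensional maximal-function estimate, so that continuity from $W^{1,p}(\R^d)$ reduces to the $L^p$-boundedness of the Hardy--Littlewood maximal operator.

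For (ii), the plan is the Whitney-cube construction: decompose $\R^d \setminus F$ into a Whitney family $\{Q_i\}$ with $\mathrm{diam}(Q_i) \sim \dist(Q_i, F)$, take a smooth partition of unity $\{\varphi_i\}$ subordinate to a mild dilation of the $Q_i$, pick $\mathrm{x}_i \in F$ closest to $Q_i$, and set
\[
 \mathcal{E}_F g := \sum_i \varphi_i \, \bar g_i, \qquad \bar g_i := \frac{1}{\rho\bigl(B(\mathrm{x}_i, r_i)\bigr)}\int_{B(\mathrm{x}_i, r_i) \cap F} g\, d\rho,
\]
with $r_i \sim \mathrm{diam}(Q_i)$ and $\mathcal{E}_F g = g$ on $F$. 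On each cube the gradient of $\mathcal{E}_F g$ is controlled by differences $|\bar g_i - \bar g_j|$ between neighbouring cells, and each such difference unfolds into a double average of $|g(\mathrm{y}) - g(\mathrm{y}')|$ on $F \times F$ at scale $\mathrm{diam}(Q_i)$. Summing over $i$ and invoking the lower bound in \eqref{e-0} to pass back to the intrinsic Besov norm yields $\|\mathcal{E}_F g\|_{W^{1,p}(\R^d)} \le C\, \|g\|_{B^{1-1/p}_{p,p}(F)}$.

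Part (iii) is then immediate from Lebesgue differentiation with respect to $\rho$: at every $\rho$-Lebesgue point $\mathrm{x}$ of $g$, the averages $\bar g_i$ over $F \cap B(\mathrm{x}_i, r_i)$ with $\mathrm{x}_i \to \mathrm{x}$ and $r_i \to 0$ converge to $g(\mathrm{x})$, and the Euclidean ball averages defining $(\mathcal{R}_F \mathcal{E}_F g)(\mathrm{x})$ do the same. The main technical burden is not any single estimate but the careful bookkeeping that identifies the intrinsic double-integral norm on $B^{1-1/p}_{p,p}(F)$ with the quantities produced by ball averaging and by the Whitney construction; this matching is precisely what the Jonsson--Wallin analysis is designed to furnish, and I would quote it rather than reproduce the calculations.
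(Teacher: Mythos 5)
The paper does not prove Proposition~\ref{p-JW} at all; it quotes it verbatim from Jonsson--Wallin \cite[Ch.~VII]{jons} and \cite[Ch.V.1.3]{jons}, explicitly ``for the convenience of the reader.'' Your sketch of the ball-average restriction, the Whitney-cube extension (which the paper does reproduce in detail when proving the Lipschitz endpoint, Theorem~\ref{t-Jons}), and the Lebesgue-differentiation argument for the right-inverse property is a correct outline of the cited proof, with the intrinsic Besov exponent $d-1+(p-1)=(d-1)+\alpha p$ identified correctly; and since you too would ultimately quote Jonsson--Wallin for the technical bookkeeping, your treatment is in effect the same as the paper's.
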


It turns out that the extension operator $\mathcal E_F$ even maintains
Lipschitz continuity:

\begin{theorem} \label{t-Jons}
 Let $F \subset \R^d$ be closed and, additionally, a $(d-1)$-set. Then the
 operator $\mathcal E_F$ from Proposition~\ref{p-JW} maps the space of
 Lipschitz continuous functions on $F$ continuously into the space of
 Lipschitz continuous functions on $\R^d$.
\end{theorem}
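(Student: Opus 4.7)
The plan is to work directly with the explicit Whitney-type construction of $\mathcal{E}_F$ given in Jonsson/Wallin \cite[Ch.~V.1]{jons} and verify by hand that it preserves Lipschitz regularity. The operator is defined on $\R^d \setminus F$ by $\mathcal{E}_F f(x) = \sum_i \bar f_i \, \varphi_i(x)$, where $\{Q_i\}_i$ is a Whitney decomposition of $\R^d \setminus F$, $\{\varphi_i\}_i$ is an associated smooth partition of unity with $|\nabla \varphi_i| \le C/\mathrm{diam}(Q_i)$ and bounded overlap $N_0$, each $p_i \in F$ satisfies $\mathrm{dist}(p_i,Q_i) \lesssim \mathrm{diam}(Q_i)$, and $\bar f_i$ is an average of $f$ over a ball $B(p_i,r_i) \cap F$ with $r_i \simeq \mathrm{diam}(Q_i)$; on $F$ one sets $\mathcal{E}_F f = f$.

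First I would estimate the gradient of $\mathcal{E}_F f$ on the open set $\R^d \setminus F$. Since $\sum_i \varphi_i \equiv 1$ there, the identity $\sum_i \nabla \varphi_i(x) = 0$ lets me write, for any fixed index $i_0$ with $x \in \mathrm{supp}\,\varphi_{i_0}$,
\[
 \nabla \mathcal{E}_F f(x) = \sum_{i : x \in \mathrm{supp}\,\varphi_i} (\bar f_i - \bar f_{i_0}) \, \nabla \varphi_i(x).
\]
For a Lipschitz $f$ one has $|\bar f_i - f(p_i)| \le \|f\|_{\mathrm{Lip}(F)} r_i$, hence $|\bar f_i - \bar f_{i_0}| \le \|f\|_{\mathrm{Lip}(F)} (|p_i - p_{i_0}| + r_i + r_{i_0})$. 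The Whitney properties force $p_i$, $p_{i_0}$, and $r_i,r_{i_0}$ to all have distance/size $\lesssim \mathrm{diam}(Q_{i_0})$ whenever the supports overlap, so this difference is $\lesssim \|f\|_{\mathrm{Lip}(F)} \mathrm{diam}(Q_{i_0})$. Combined with $|\nabla \varphi_i(x)| \lesssim 1/\mathrm{diam}(Q_{i_0})$ and the bounded overlap, one obtains $|\nabla \mathcal{E}_F f(x)| \le C \|f\|_{\mathrm{Lip}(F)}$ uniformly on $\R^d \setminus F$.

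Next I would analyse the behaviour of $\mathcal{E}_F f$ near $F$. For $x \notin F$ with $\mathrm{dist}(x,F) = |x-x_0|$, $x_0 \in F$, every Whitney cube meeting $x$ has diameter comparable to $|x-x_0|$, so for every index $i$ appearing in the sum at $x$ one has $|p_i - x_0| + r_i \lesssim |x-x_0|$ and therefore $|\bar f_i - f(x_0)| \lesssim \|f\|_{\mathrm{Lip}(F)} |x-x_0|$; using $\sum_i \varphi_i(x) = 1$ this gives $|\mathcal{E}_F f(x) - f(x_0)| \le C \|f\|_{\mathrm{Lip}(F)} |x-x_0|$. In particular $\mathcal{E}_F f$ extends continuously to $\R^d$ with the value $f$ on $F$.

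Finally, to conclude a Lipschitz bound for arbitrary $x,y \in \R^d$, one splits into the three cases ($x,y \in F$, both in the complement, or one on each side). The first case is trivial, the second follows from the gradient bound and integration along a straight segment entirely contained in $\R^d \setminus F$ if possible, and otherwise by inserting a point of $F$ on the segment $[x,y]$ and applying the boundary estimate to each half; the third case is a direct application of the boundary estimate. Supplementing with the trivial $L^\infty$-bound $\|\mathcal{E}_F f\|_\infty \le \|f\|_\infty$ (which follows from $\mathcal{E}_F f$ being a convex combination of averages of $f$) yields continuity $\mathrm{Lip}(F) \to \mathrm{Lip}(\R^d)$. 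The main technical obstacle is the gradient step: one has to squeeze out of the Whitney geometry that all the $p_i$ and radii $r_i$ that feed into the sum at a given point are commensurate with $\mathrm{dist}(x,F)$, so that the $1/\mathrm{diam}(Q_i)$ blow-up of $\nabla \varphi_i$ is exactly compensated by the smallness of the differences of averages.
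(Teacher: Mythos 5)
Your approach is essentially the same as the paper's: both proofs open up the explicit Whitney-type formula for $\mathcal E_F$ and verify Lipschitz preservation by comparing local averages, exploiting that the cube sizes $l_i$ entering at a point are commensurate with the distance to $F$ so that the $l_i^{-1}$ blow-up of $\nabla\phi_i$ is cancelled. You organize it as a pointwise gradient bound on $\R^d\setminus F$ plus a boundary estimate, whereas the paper works directly with the differences $\mathcal E_F f(\mathrm x)-\mathcal E_F f(\mathrm y)$ via the identities \eqref{aj-4}--\eqref{aj-7}; that is a presentational, not substantive, difference.

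There is, however, a genuine gap. The Jonsson--Wallin operator as recalled in the paper is \emph{not} $\sum_i \bar f_i\,\phi_i$ over all Whitney cubes; it is $\sum_{i\in I}\phi_i\,c_i\int f\,d\rho$ with the index set $I=\{i: s_i\le 1\}$ (sidelength at most $1$), cf.\ \eqref{aj-3}. Your key gradient step uses $\sum_i\nabla\phi_i(x)=0$ to replace $\bar f_i$ by $\bar f_i-\bar f_{i_0}$; but this identity only helps because the \emph{full} partition of unity sums to $1$. Where $x$ lies in (or near) a cube of sidelength $>1/4$, some neighbouring cubes may fall outside $I$, so $\sum_{i\in I}\phi_i(x)$ need not be constant near $x$ and $\sum_{i\in I}\nabla\phi_i(x)$ is in general nonzero. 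Consequently your uniform bound $|\nabla\mathcal E_F f|\le C\|f\|_{\mathrm{Lip}(F)}$ on all of $\R^d\setminus F$ is not established by the argument as written. The same cutoff issue undermines the boundary estimate in that range, since $\sum_{i\in I}\phi_i(x)=1$ is also used there. This is exactly the case the paper's proof isolates in its last paragraph ($s_\nu>1/4$ or $s_\kappa>1/4$): there one can no longer subtract a constant, but because the relevant $l_\kappa$ is then bounded \emph{below}, the crude bounds $|D^j\phi_i|\le c\,l_i^{-1}\le c$ and $|c_i\int f\,d\rho|\le\|f\|_\infty$ already give $|\nabla\mathcal E_F f|\le c\|f\|_\infty$ there, and $|\mathcal E_F f|\le c\|f\|_\infty$ handles the remaining global case. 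Your ``supplement with the $L^\infty$ bound'' remark gestures at this but does not actually repair the gradient step; you need to treat the large-cube region as a separate case in which you forgo the telescoping trick entirely and use the lower bound on $l_i$ instead.
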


\begin{proof}
 The extension operator $\mathcal E_F$ is of Whitney type, and we need some
 facts about the Whitney decomposition of $\mathbb R^d\setminus F$ and a
 related partition of unity $\{\phi_i\}$, cf. \cite{jons} for more background
 and details. The decomposition is a collection of closed, dyadic cubes $Q_i$,
 with sidelength $2^{N_i}$ for integers $N_i$, and with mutually disjoint
 interiors, such that $\bigcup Q_i = \mathbb R^d\setminus F$, and 
 \begin{equation}\label {aj-1}
  {\rm diam} Q_i \leq \mathrm{d} (Q_i,F) \leq 4 \rm {diam} Q_i,
 \end{equation}
 where $\mathrm{d}(Q_i,F)$ is the distance between $Q_i$ and $F$. Denote the
 diameter of $Q_i$ by $l_i$, its sidelength by $s_i$,  and its center by
 $\mathrm x_i$, and let $Q_i^\star$ denote the cube obtained by expanding $Q_i$
 around its center with a factor $\iota$, $1<\iota<5/4$. It follows from
 \eqref{aj-1} that
 \begin{equation}\label {aj-2}
  1/4 l_i\leq l_k \leq 4l_i,
 \end{equation}
 if $Q_i$  and $Q_k$ touch. This means that  $Q_i^\star$ intersects a cube $Q_k$
 only if $Q_i$ and $Q_k$ touch, and that each point in $\mathbb R^d\setminus F$ is
 contained in at most $N_0$ cubes $Q_i^\star$, where $N_0$ is a number depending
 only on the dimension $d$.

 Next, nonnegative $C^\infty$-functions $\phi_i$ are chosen in such a way that
 $\phi_i(\mathrm x) = 0$ if $\mathrm x \notin Q_i^\star$, $\sum_i
 \phi_i(\mathrm x) = 1$, $\mathrm x \in \mathbb R^d\setminus F$, and so that
 $|D^j \phi_i| \leq c l_i^{-|j|}$ for any $j$, where $c$ depends on $j$. 

 Let $I$ denote those $i$ such that $s_i\leq 1$, let $\rho$ again be the
 restriction of the ($d-1$)-dimensional Hausdorff measure on $F$, and put $c_i =
 \rho (B(\mathrm x_i, 6l_i))^{-1}$. Note that it follows from \eqref{e-0} and
 \eqref{aj-1}, that $\rho (B(\mathrm x_i, 6l_i)) > 0$. The extension operator used
 in Proposition~\ref{p-JW} is given by
 \begin{equation}\label {aj-3}
  {\mathcal E}_F f(\mathrm x) = \sum_{i \in I} \phi_i(\mathrm x) c_i
	\int_{|\mathrm t - \mathrm x_i| \le 6 l_i} f(\mathrm t) \; \mathrm d \rho
	(\mathrm t), \quad \mathrm x \in \R^d \setminus F,
 \end{equation}
 and ${\mathcal E}_F f(\mathrm x) = f(\mathrm x)$ for $\mathrm x \in F$.

 We now head for Lipschitz continuity of $\mathcal E_F f$. To begin with let
 $\mathrm x$ and $\mathrm y$ be in cubes with sides $\leq 1/4$. Then $\sum
 \phi_i(\mathrm x) = \sum \phi_k(\mathrm y) = 1$, where the sums are taken over
 all $i$ and $k$, respectively. Using this, one obtains, for any constant $b$,
 \begin{equation}\label {aj-4}
  {\mathcal E}_F f(\mathrm x) - b = \sum_i \phi_i(\mathrm x) c_i
	\int_{|\mathrm t - \mathrm x_i| \le 6l_i} ( f(\mathrm t) - b )
	\; \mathrm d \rho (\mathrm t),
 \end{equation}
 and taking $b = {\mathcal E}_F f(\mathrm y)$
 \begin{equation}\label {aj-5}
  {\mathcal E}_F f(\mathrm x) - {\mathcal E_F f}(\mathrm y) = \sum_i \sum_k
	\phi_i(\mathrm x) \phi_k(\mathrm y) c_i c_k \int
	\int_{|\mathrm t - \mathrm x_i| \le 6 l_i,
	|\mathrm s - \mathrm x_k| \le 6l_k} (f(\mathrm t) - f(\mathrm s))
	\; \mathrm d \rho (\mathrm t) \mathrm d \rho(\mathrm s).
 \end{equation}
 We also have
 \begin{equation} \label{aj-6}
  D^j ({\mathcal E}_F f)(\mathrm x) = \sum_i D^j \phi_i (\mathrm x) c_i
	\int_{|\mathrm t - \mathrm x_i| \le 6l_i} f(\mathrm t) \; \mathrm d \rho
	(\mathrm t),
 \end{equation}
 and, for $|j|>0$, since then $\sum_i D^j \phi_i(\mathrm x) = 0$, so we can
 subtract ${\mathcal E}_F f(\mathrm y)$ from the integrand,
\begin{equation}\label {aj-7}
  D^j ({\mathcal E}_F f) (\mathrm x) = \sum_i \sum_k D^j \phi_i(\mathrm x)
	\phi_k(\mathrm y) c_i c_k \int\int_{|\mathrm t - \mathrm x_i| \le 6l_i,
	|\mathrm s - \mathrm x_k| \le 6l_k} (f(\mathrm t) - f(\mathrm s))
	\; \mathrm d \rho(\mathrm t) \mathrm d \rho(\mathrm s).
 \end{equation}

 Assume now that $f$ is Lipschitz continuous with Lipschitz norm $1$. Let
 $\mathrm x \in Q_\nu$, $\mathrm y \in Q_\eta$, where, say, $s_\nu \geq
 s_\eta$, and assume first $s_{\nu} \leq 1/4$. If $|\mathrm x - \mathrm y| <
 l_\nu/2$, then by the mean value theorem $\mathcal E_F f(\mathrm x) -
 \mathcal E_F f(\mathrm y) = \nabla (\mathcal E_F f)(\xi) \cdot (\mathrm x -
 \mathrm y)$ for some $\xi$ with $|\mathrm x -
 \xi| < l_\nu/2$. Note that the geometric constellation assures that the whole
 segment joining $\mathrm x$ and $\mathrm y$ avoids $F$, so the mean value
 theorem is applicable. Next, take $\kappa$ so that  $\xi \in Q_\kappa$. Now we
 use, if $s_{\kappa}\leq 1/4$ (otherwise, see below), \eqref{aj-7} with
 $\mathrm x$ and $\mathrm y$ equal to $\xi$, and recall that if $\phi_i(\xi )
 \neq 0$, then $Q_i$ and $Q_\kappa$ touch. For nonzero terms we then have, for
 $\mathrm t$ and $\mathrm s$ in the domain of integration, $|\mathrm t -
 \mathrm s| \le |\mathrm t - \mathrm x_i| + |\mathrm x_i - \mathrm x_\kappa| +
 |\mathrm x_\kappa - \mathrm x_k| + |\mathrm x_k - s| \le 7 l_i + 2 l_\kappa +
 7 l_k$, and also, by \eqref{aj-2}, that, $l_i$ and $l_k$ are comparable to
 $l_\kappa$. Recalling that $0 \le \phi_i \le 1$, $|D^j \phi_i| \le c l_i^{-1} $
 for $|j| = 1$ and using $|f(\mathrm s) - f(\mathrm t)| \le
 |\mathrm t-\mathrm s|$, one immediately obtains $|D^j ({\mathcal E}_F f)(\xi)|
 \le c$ for $|j| = 1$, so
 \begin{equation} \label{aj-8}
  |{\mathcal E}_F f(\mathrm x) - {\mathcal E}_F f(\mathrm y)| \le c |\mathrm x
	- \mathrm y|.
 \end{equation}
 If $|\mathrm x - \mathrm y| \ge l_\nu/2$, we use \eqref{aj-5} together with the
 observation that now $|\mathrm t - \mathrm s| \le |\mathrm t - \mathrm x_i| +
 |\mathrm x_i - \mathrm x| + |\mathrm x - \mathrm y| + |\mathrm y - \mathrm y_k| +
 |\mathrm y_k - \mathrm s| \le 7 l_i + l_\nu + l_\eta + 7 l_k + |\mathrm x -
 \mathrm y| \le 58 l_\nu + |\mathrm x - \mathrm y| \le c |\mathrm x - \mathrm y|$
 if $\phi(\mathrm x)$ and $\phi(\mathrm y)$ are nonzero, and obtain again
 \eqref{aj-8}. If instead $\mathrm y \in F$ we get the same result using
 \eqref{aj-4} with $b = f(\mathrm y)$ and $|\mathrm t - \mathrm y| \le
 |\mathrm t - \mathrm x_i| + |\mathrm x_i - \mathrm x| + |\mathrm x - \mathrm y|
 \le 7 l_i + l_\nu + |\mathrm x - \mathrm y| \le c |\mathrm x - \mathrm y|$,
 since, by \eqref{aj-1}, $l_\nu \le |\mathrm x - \mathrm y|$.

 If $s_\nu > 1/4$, or $s_\kappa > 1/4$, we can no longer use \eqref{aj-7},
 \eqref{aj-5}, and \eqref{aj-4}. In the case $|\mathrm x - \mathrm y| < l_\nu/2$,
 \eqref{aj-6} together with  $|f| \le 1$ gives the desired estimate
 $|D^j ({\mathcal E}_F f)(\xi)| \le c l_\kappa^{-1} \le c$ for $|j| = 1$. Using
 \eqref{aj-3} we see  that $|{\mathcal E}_F f| \leq c$ everywhere, which in
 particular implies \eqref{aj-8} in the remaining cases.
\end{proof}

\begin{rem} \label{r-wichtig}
 \begin{enumerate}
  \item Since the detailed structure of the Besov spaces
	$B^{1-\frac {1}{p}}_{p,p}(F)$ is not of interest in this paper, we
	refer to \cite[Ch.~V.1]{jons} for a definition.
  \item It is known that, for any $f \in W^{1,p}(\R^d)$,
	\begin{equation} \label{e-pointwi}
	  \lim_{r \to 0} \frac {1}{|B(\mathrm y,r)|}\int_{B(\mathrm y,r)}
		f(\mathrm x) \; d \mathrm x
	\end{equation}
	exists for $\mathcal H_{d-1}$-almost all $\mathrm y \in \R^d$ (even more
	is true, see \cite[Ch.~3.1]{ziemer}). Moreover, the function, defined by
	\eqref{e-pointwi}, reproduces $f$ within its Sobolev class, and the
	restriction of $f$ to any $(d-1)$-set $F$ is established this way, compare
	\cite[Ch.~2.1]{jons}.
  \item The proof of Theorem~\ref{t-Jons} does in fact not require much about the
	measure $\rho$. The only thing needed is that the measure of any ball with
	center in $F$ is positive, which in particular holds for any $l$-measure
	with $0 < l \le n$.
 \end{enumerate}
\end{rem}
\noindent
For all what follows we fix an open ball $B$ which contains $\overline \Omega$.
In the sequel we consider in our case $F = D$ the restriction/extension operators
$\mathcal R_F/\mathcal E_F$ not only on all of $\R^d$, but also on the ball $B$.
Since $D \subset B$ and the restriction operator $\mathcal R_D$ takes into account
only the local behaviour of functions near $D$, the operator $\mathcal E_D$
remains a right inverse of $\mathcal R_D$ in this understanding. In this spirit,
we also maintain the notations $\mathcal E_D, \mathcal R_D$.
\begin{definition} \label{d-modspace}
 If $\Lambda \subseteq \R^d$ is a domain and $F \subset \Lambda $ is a
 $(d-1)$-set, then we write
 \[ \mathcal W^{1,p}_F(\Lambda) := \bigl\{ \psi \in W^{1,p}(\Lambda) :
	\mathcal R_F \psi=0 \; \text{a.e.} \;\text {on} \; F \bigr\},
 \]
 where the measure on $F$ is again $\mathcal H_{d-1}|_F$,
 cf.\@ Definition~\ref{d-lset}.
\end{definition}
It is a natural question whether $\mathcal W^{1,p}_F(\Lambda) =
W^{1,p}_F(\Lambda)$ holds. An affirmative answer will be given in
Corollary~\ref{c-coincid} -- which will serve as a technical tool for the proof
of the interpolation results below.

\section{Interpolation} \label{sec3}

\noindent
In this section we establish interpolation results that are well-known for 
$\R^d$ or smooth domains, for the spaces $W^{1,p}_D(\Omega)$. As already
mentioned in the introduction, the crucial ingredient is a Sobolev extension
operator. So we introduce the following assumption.

\begin{assu} \label{a-extendgeneral}
 There exists a linear, continuous extension operator $\mathfrak E :
 W^{1,1}_D(\Omega) \to W^{1,1}_D(\R^d)$ which simultaneously defines a continuous
 extension operator $\mathfrak E : W^{1,p}_D(\Omega) \to W^{1,p}_D(\R^d)$ for
 every $p \in {]1,\infty[}$.
\end{assu}

\begin{rem} \label{r-remain}
 \begin{enumerate}
  \item \label{r-remain:i} We are aware that Assumption~\ref{a-extendgeneral}
	is of quite different character in comparison to Assumption~\ref{a-1}.
	Only by formulating the results in this abstract way, it becomes
	manifest that it is only the functorial property of the extension
	operator that carries over the interpolation results.
	 However, in Section~\ref{extens} we will subsequently
	establish geometric conditions on $\Omega$ and $D$ that will assure
	Assumption~\ref{a-extendgeneral}.
  \item \label{r-remain:ii} Combining the mapping $\mathfrak E$ with the
	operator that restricts any function on $\R^d$ to $B$, one obtains an
	operator that maps $W^{1,p}_D(\Omega)$ continuously into the space
	$W^{1,p}_D(B)$; we maintain the notation $\mathfrak E$ for the
	resulting operator.
  \item \label{r-remain:iii} Under Assumptoin \ref{a-extendgeneral}, one can 
        establish the corresponding Sobolev embeddings $W^{1,p}_D(\Omega) \to
        L^q(\Omega)$ (compactness, included) in a straightforward manner.
 \end{enumerate}
\end{rem}

Our main result on interpolation is the following.

\begin{theorem} \label{t-1interpolgen}
 Let Assumptions~\ref{a-1} and~\ref{a-extendgeneral} be satisfied. Then complex
 and real interpolation between the spaces of the family $\{ W^{1,p}_D(\Omega)
 \}_{p \in {]1,\infty[}}$ act as for the family $\{ W^{1,p}(\R^d)
 \}_{p \in {]1,\infty[}}$. In particular, one has for $p_0, p_1 \in
 {]1,\infty[}$ and $\frac 1p = \frac{1 - \theta}{p_0} + \frac{\theta}{p_1}$
 \[ \bigl[ W^{1,p_0}_D(\Omega), W^{1,p_1}_D(\Omega) \bigr]_\theta =
	W^{1,p}_D(\Omega) = \bigl( W^{1,p_0}_D(\Omega), W^{1,p_1}_D(\Omega)
	\bigr)_{\theta,p},
 \]
\end{theorem}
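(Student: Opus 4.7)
The plan is to exhibit $W^{1,p}_D(\Omega)$, simultaneously for every $p \in {]1,\infty[}$, as a retract of a complemented subspace of $W^{1,p}(\mathbb{R}^d)$, and then to invoke the classical interpolation identities for $\{W^{1,p}(\mathbb{R}^d)\}$. I would proceed in two stages: first prove the analogue on $\mathbb{R}^d$ by constructing a $p$-independent projection, and then descend it to $\Omega$ through the extension operator from \assuref{a-extendgeneral}.

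For the first stage, \proref{p-JW} says that $\mathcal R_D \mathcal E_D$ is the identity on $B^{1-1/p}_{p,p}(D)$, so
\[
 P := \mathrm{Id} - \mathcal E_D \mathcal R_D
\]
is a bounded projection on $W^{1,p}(\mathbb{R}^d)$ whose range equals $\mathcal W^{1,p}_D(\mathbb{R}^d)$, and whose formula is independent of $p$. Once the announced \corref{c-coincid} delivers $\mathcal W^{1,p}_D(\mathbb{R}^d) = W^{1,p}_D(\mathbb{R}^d)$, the space $W^{1,p}_D(\mathbb{R}^d)$ becomes a complemented subspace of $W^{1,p}(\mathbb{R}^d)$ by one and the same projection $P$, bounded at both endpoints $p_0, p_1$ of any interpolation couple. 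Since complex and real interpolation pass through complemented subspaces, and the interpolation identities for $\{W^{1,p}(\mathbb{R}^d)\}$ themselves are classical (cf.\ \cite{triebel}), one obtains the stated equality with $\mathbb{R}^d$ in place of $\Omega$.

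For the second stage, \assuref{a-extendgeneral} together with the ordinary restriction $r u := u|_\Omega$ supplies a coretraction/retraction pair
\[
 \mathfrak E : W^{1,p}_D(\Omega) \longrightarrow W^{1,p}_D(\mathbb{R}^d), \qquad r : W^{1,p}_D(\mathbb{R}^d) \longrightarrow W^{1,p}_D(\Omega),
\]
valid for every $p$ and satisfying $r \circ \mathfrak E = \mathrm{Id}$. The general retraction principle for complex and real interpolation then transports the identity from the first stage from $\mathbb{R}^d$ to $\Omega$, which is precisely the content of \thmref{t-1interpolgen}.

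The main obstacle is not in the interpolation machinery itself but in the identification $\mathcal W^{1,p}_D = W^{1,p}_D$ that underlies it, i.e.\ \corref{c-coincid}. This is where \thmref{t-Jons} becomes indispensable: to approximate a given $u \in \mathcal W^{1,p}_D(\mathbb{R}^d)$ by functions from $C^\infty_D(\mathbb{R}^d)$, the natural strategy is to multiply $u$ by a family of Lipschitz cut-offs that vanish on $D$ and converge monotonically to $1$ off $D$; \thmref{t-Jons} is what keeps the gradients of such cut-offs uniformly controllable, so that $W^{1,p}$-convergence $\varphi_n u \to u$ is obtained by dominated convergence, after which a standard mollification lands in $C^\infty_D$. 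Once this density step is in place, the remainder of the proof of \thmref{t-1interpolgen} is a formal application of the two interpolation principles cited above.
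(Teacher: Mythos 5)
Your proposal is correct and takes essentially the same route as the paper: a $p$-independent bounded projection $\mathrm{Id} - \mathcal E_D \mathcal R_D$ realises the trace-zero subspace (identified with $W^{1,p}_D$ via \thmref{t-coincid}) as a complemented subspace of the ambient $W^{1,p}$, and the retraction/coretraction pair $(\mathfrak E, r)$ then carries interpolation down to $\Omega$. The only cosmetic difference is that you use $\mathbb{R}^d$ as the ambient space where the paper passes through the fixed ball $B \supset \overline\Omega$; both are equally valid since interpolation in the family $\{W^{1,p}\}$ is classical either way.
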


\begin{corollary} \label{c-remaintrue-}
 Let $\hat W^{-1,q}_D(\Omega)$ denote the dual of $W^{1,q'}_D(\Omega)$,
 $\frac{1}{q} + \frac{1}{q'} = 1$ and $W^{-1,q}_D(\Omega)$ denote the space of
 continuous antilinear forms on $W^{1,q'}_D(\Omega)$, $\frac{1}{q} +
 \frac{1}{q'} = 1$. For $p_0, p_1 \in {]1,\infty[}$ and $\frac{1}{p} =
 \frac{1 - \theta}{p_0} + \frac{\theta}{p_1}$ one has
 \begin{equation} \label{e-interpol1-}
  \bigl[ \hat W^{-1,p_0}_D(\Omega), \hat W^{-1,p_1}_D(\Omega) \bigr]_\theta =
	\hat W^{-1,p}_D(\Omega),
 \end{equation}
 and 
 \begin{equation} \label{e-interpol1-1}
  \bigl[ W^{-1,p_0}_D(\Omega), W^{-1,p_1}_D(\Omega) \bigr]_\theta =
	W^{-1,p}_D(\Omega).
 \end{equation}
\end{corollary}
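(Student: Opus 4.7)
My plan is to derive Corollary~\ref{c-remaintrue-} from Theorem~\ref{t-1interpolgen} by dualizing, using the standard duality theorem for complex interpolation (e.g.\ Bergh--L\"ofstr\"om, Theorem~4.5.1, or Triebel~Ch.~1.11.3): if $(X_0,X_1)$ is an interpolation couple with $X_0\cap X_1$ dense in both $X_0$ and $X_1$, and at least one of the $X_j$ is reflexive, then $[X_0,X_1]_\theta^\ast = [X_0^\ast,X_1^\ast]_\theta$.

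First I would verify the two hypotheses. For reflexivity: each $W^{1,p}_D(\Omega)$ with $p\in{]1,\infty[}$ is, by definition, a closed subspace of the reflexive space $W^{1,p}(\Omega)$, hence reflexive. For density: the space $C^\infty_D(\Omega)$ sits in every $W^{1,p}_D(\Omega)$ and is dense in each by definition, so it lies in the intersection $W^{1,p_0'}_D(\Omega)\cap W^{1,p_1'}_D(\Omega)$ and is dense in both factors.

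Then I would apply the duality theorem to the couple $\bigl(W^{1,p_0'}_D(\Omega), W^{1,p_1'}_D(\Omega)\bigr)$ with $\frac{1}{p_j}+\frac{1}{p_j'}=1$, obtaining
\[
 \bigl[W^{1,p_0'}_D(\Omega),W^{1,p_1'}_D(\Omega)\bigr]_\theta^\ast
	= \bigl[\hat W^{-1,p_0}_D(\Omega),\hat W^{-1,p_1}_D(\Omega)\bigr]_\theta,
\]
where the right-hand side uses the defining identification $\hat W^{-1,p_j}_D(\Omega)=W^{1,p_j'}_D(\Omega)^\ast$. Theorem~\ref{t-1interpolgen} identifies the interpolation space on the left with $W^{1,p'}_D(\Omega)$, where $\tfrac{1}{p'}=\tfrac{1-\theta}{p_0'}+\tfrac{\theta}{p_1'}$; taking complements gives exactly the exponent relation $\tfrac{1}{p}=\tfrac{1-\theta}{p_0}+\tfrac{\theta}{p_1}$, and the dual is $\hat W^{-1,p}_D(\Omega)$ by definition. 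This proves \eqref{e-interpol1-}.

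For \eqref{e-interpol1-1} with antilinear forms, I would use that $W^{-1,q}_D(\Omega)$ is related to $\hat W^{-1,q}_D(\Omega)$ by the isometric antilinear map $\ell\mapsto\overline{\ell(\,\overline{\cdot}\,)}$ (or, equivalently, complex conjugation of the test function). Since complex interpolation is compatible with isometric antilinear isomorphisms (the functional calculus for the strip $0\le\mathrm{Re}\,z\le1$ carries through verbatim), the identity \eqref{e-interpol1-} transfers to the antilinear setting, yielding \eqref{e-interpol1-1}. I do not expect a genuine obstacle here: the whole argument is a textbook dualization, and the only points to check carefully are reflexivity and density of the intersection, both of which follow directly from the definition of $W^{1,p}_D(\Omega)$.
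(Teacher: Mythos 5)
Your proof of \eqref{e-interpol1-} is exactly the paper's argument: dualize Theorem~\ref{t-1interpolgen} via the duality formula for complex interpolation in the reflexive setting, and your explicit verification of reflexivity and density of the intersection (via $C^\infty_D(\Omega)$) is the correct supporting detail that the paper leaves implicit.

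For \eqref{e-interpol1-1} your idea is right but the formula is off. The map $\ell\mapsto\overline{\ell(\,\overline{\cdot}\,)}$ is \emph{not} the same as ``complex conjugation of the test function'': composing the two conjugations sends a linear form back to a linear form, so it does not carry $\hat W^{-1,q}_D(\Omega)$ onto $W^{-1,q}_D(\Omega)$. What you want is the single conjugation $T\mapsto T_a$ with $\langle T_a,\psi\rangle:=\langle T,\overline\psi\rangle$, which is a \emph{linear} isometric bijection from linear onto antilinear forms. Because it is linear, you do not need any extension of the complex-interpolation machinery to antilinear maps: $T\mapsto T_a$ and its inverse form a retraction/coretraction pair, and the standard retraction/coretraction theorem transfers \eqref{e-interpol1-} to \eqref{e-interpol1-1} directly. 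This is precisely the route the paper takes, and it avoids the (unnecessary and slightly delicate) claim about invariance of the $[\cdot,\cdot]_\theta$ functor under antilinear isomorphisms.
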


\begin{proof}
 Concerning \eqref{e-interpol1-}, one employs the duality formula for complex
 interpolation in case of reflexive Banach spaces (see
 \cite[Ch.~1.11.3]{triebel}), which reads as $[X',Y']_\theta = [X,Y]'_\theta$.
 In order to get \eqref{e-interpol1-1}, one associates to any linear form
 $T$ an antilinear form $T_a$ defining $\langle T_a, \psi \rangle := \langle
 T, \overline \psi \rangle$. It is clear that the mappings $T \mapsto T_a$ and
 $T_a \mapsto T$ form a retraction/coretraction pair, thus
 \eqref{e-interpol1-1} may be derived from \eqref{e-interpol1-} by the
 retraction/coretraction theorem for interpolation.
\end{proof}

Theorem~\ref{t-1interpolgen} will be proved in two steps. First we establish
the corresponding result for the spaces $W^{1,p}_F(B)$ where $B$ is a ball and
$F \subseteq B$ is a $(d-1)$-set. From this we will then deduce the general
statement.

One main ingredient is the Jonsson/Wallin result from Proposition~\ref{p-JW}
and Theorem~\ref{t-Jons}. We use this in the following way: the right inverse
property of $\mathcal E_F$ for $\mathcal R_F$ implies that $\mathcal E_F
\mathcal R_F : W^{1,p}(B) \to  W^{1,p}(B)$ is a (continuous) projection.
Furthermore, it is straightforward to verify that $\mathcal E_F \mathcal R_F
\varphi = 0$, iff $\mathcal R_F \varphi=0$. This implies that $\varphi \in
\mathcal W_F^{1,p}(B)$, if and only if $\varphi \in W^{1,p}(B)$ and
$(1 - \mathcal E_F \mathcal R_F) \varphi = \varphi$. Consequently, the operator
$\mathcal P := 1 - \mathcal E_F \mathcal R_F$ is a (continuous) projection from
$W^{1,p}(B)$ onto $\mathcal W_F^{1,p}(B)$.

The existence of the projector $\mathcal P$ allows to deduce the desired
interpolation properties for the spaces $\mathcal W^{1,p}_F(\Omega)$ by purely
functorial properties.

\begin{theorem} \label{t-1interpol}
 Let $F \subset  B$ be a $(d-1)$-set. Then the spaces $\mathcal W^{1,p}_F(B)$ 
$(p \in {]1,\infty[})$ interpolate according to the same rules as the
 spaces $W^{1,p}(B)$ do. This affects any interpolation functor, in particular
real and complex interpolation.
\end{theorem}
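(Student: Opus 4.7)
The plan is to transfer interpolation from the ambient family $\{W^{1,p}(B)\}_{p \in (1,\infty)}$ to the subfamily $\{\mathcal{W}^{1,p}_F(B)\}_{p \in (1,\infty)}$ via the standard interpolation theorem for complemented subspaces (cf.\ \cite[Ch.~1.17.1]{triebel}), using the projection $\mathcal{P} := 1 - \mathcal{E}_F \mathcal{R}_F$ introduced in the paragraph just before the statement.

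First I would check the two hypotheses needed for that theorem: namely, that $\mathcal{P}$ is a single linear operator whose restriction to each $W^{1,p}(B)$ is a continuous projection, and that its range on $W^{1,p}(B)$ equals $\mathcal{W}^{1,p}_F(B)$. For the former, observe that by Proposition~\ref{p-JW} the operators $\mathcal{R}_F$ and $\mathcal{E}_F$ are given by concrete, $p$-independent expressions (the pointwise trace of Remark~\ref{r-wichtig}(ii), respectively the Whitney-type formula \eqref{aj-3}) that act boundedly between $W^{1,p}(B)$ and $B^{1-1/p}_{p,p}(F)$ for every $p \in (1,\infty)$, with $\mathcal{R}_F \mathcal{E}_F = \mathrm{id}$. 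The range characterisation was already recorded in the text: $\varphi \in \mathcal{W}^{1,p}_F(B)$ iff $\mathcal{R}_F \varphi = 0$ iff $\mathcal{P}\varphi = \varphi$.

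The rest of the argument is purely functorial. Given any interpolation functor $\mathcal{F}$ and any $p_0, p_1 \in (1,\infty)$, the operator $\mathcal{P}$ extends, by the very definition of an interpolation functor, to a bounded projection on $\mathcal{F}(W^{1,p_0}(B), W^{1,p_1}(B))$, and the complemented-subspace theorem then yields
\begin{equation*}
 \mathcal{F}\bigl(\mathcal{W}^{1,p_0}_F(B), \mathcal{W}^{1,p_1}_F(B)\bigr) = \mathcal{P}\bigl(\mathcal{F}(W^{1,p_0}(B), W^{1,p_1}(B))\bigr).
\end{equation*}
Specialising $\mathcal{F}$ to the complex or real method and inserting the classical identities $[W^{1,p_0}(B), W^{1,p_1}(B)]_\theta = W^{1,p}(B) = (W^{1,p_0}(B), W^{1,p_1}(B))_{\theta,p}$ with $1/p = (1-\theta)/p_0 + \theta/p_1$ produces $\mathcal{F}(\mathcal{W}^{1,p_0}_F(B), \mathcal{W}^{1,p_1}_F(B)) = \mathcal{P}(W^{1,p}(B)) = \mathcal{W}^{1,p}_F(B)$, which is the desired conclusion.

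The only delicate point — and the one that explains why so much care has been invested in recalling the Jonsson/Wallin construction — is verifying that $\mathcal{P}$ is really the \emph{same} linear map for every $p$, so that restriction to an interpolation couple makes sense. This is precisely what Proposition~\ref{p-JW} guarantees, so no further obstacle arises and the proof reduces to invoking the complemented-subspace (equivalently, retraction/coretraction) theorem.
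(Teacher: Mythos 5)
Your proposal is correct and follows essentially the same route as the paper: introduce the projection $\mathcal P = 1 - \mathcal E_F \mathcal R_F$ and invoke the complemented-subspace interpolation principle from \cite[Ch.~1.17.1]{triebel}, noting that $\mathcal P$ is a single, $p$-independent bounded projection with range $\mathcal W^{1,p}_F(B)$. The paper's proof is simply a more terse version of exactly this argument.
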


\begin{proof}
 Let $\mathcal P$ be the projection from above. Since, for any $p \in
 {]1,\infty[}$, $\mathcal P$ maps $W^{1,p}(B)$ onto $\mathcal W_F^{1,p}(B)$,
 interpolation carries over from the spaces $W^{1,p}(B)$ to the spaces
 $\mathcal W_F^{1,p}(B)$ by a classical interpolation principle for
 complemented subspaces, see \cite[Ch.~1.17.1]{triebel}.
\end{proof}

In order to obtain this also for the spaces $W^{1,p}_D(\Omega)$, we will prove
the following

\begin{theorem} \label{t-coincid}
 Let $F \subset \R^d$ be a $(d-1)$-set. Then the spaces $ W^{1,p}_F(\R^d)$ and 
$\mathcal W^{1,p}_F(\R^d)$ in fact coincide for $p \in {]1,\infty[}$.
\end{theorem}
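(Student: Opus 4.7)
The inclusion $W^{1,p}_F(\R^d)\subseteq\mathcal W^{1,p}_F(\R^d)$ is immediate. Any $\varphi\in C^\infty_F(\R^d)$ vanishes on an open neighbourhood of $F$, so its pointwise (Lebesgue-point) representative on $F$ is identically zero; hence $\mathcal R_F\varphi=0$ as an element of $B^{1-1/p}_{p,p}(F)$. The condition $\mathcal R_F\cdot =0$ is closed in $W^{1,p}(\R^d)$ by the continuity of $\mathcal R_F$ (Proposition~\ref{p-JW}), and passes to the $W^{1,p}$-closure of $C^\infty_F(\R^d)$.

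For the reverse inclusion, let $\psi\in\mathcal W^{1,p}_F(\R^d)$; the strategy is approximate--correct--truncate--mollify. First, pick $\tilde\psi_n\in C^\infty_c(\R^d)$ with $\tilde\psi_n\to\psi$ in $W^{1,p}(\R^d)$, and set
\[
\psi_n:=\tilde\psi_n-\mathcal E_F(\mathcal R_F\tilde\psi_n).
\]
The right-inverse property gives $\mathcal R_F\psi_n=0$, and $\psi_n\to\psi$ in $W^{1,p}(\R^d)$ since $\mathcal R_F\tilde\psi_n\to\mathcal R_F\psi=0$ in the Besov space and $\mathcal E_F$ is continuous. Crucially, $\psi_n$ is \emph{continuous} on $\R^d$: the smooth, compactly supported $\tilde\psi_n$ is Lipschitz, so $\mathcal R_F\tilde\psi_n=\tilde\psi_n|_F$ is Lipschitz on $F$, and Theorem~\ref{t-Jons} then guarantees that $\mathcal E_F(\tilde\psi_n|_F)$ is Lipschitz on $\R^d$. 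Inspecting the Whitney formula~\eqref{aj-3}, the sum runs only over indices $i\in I$ (so $l_i\le\sqrt d$), which together with $|x-x_i|\le\tfrac58 l_i$ on $\supp\phi_i$ and $\mathrm{dist}(x_i,\supp f)\le 6 l_i$ on the nonzero integrals gives $\supp\mathcal E_F f\subseteq\{x:\mathrm{dist}(x,\supp f)\le 7\sqrt d\}$; consequently $\psi_n$ has compact support. Finally, since $\mathcal E_F$ coincides with its input on $F$, we have $\psi_n\equiv 0$ on $F$ \emph{pointwise}.

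With this continuous, compactly supported $\psi_n$ vanishing identically on $F$, apply the standard Sobolev truncation $\Psi_{n,\varepsilon}:=\psi_n-t_\varepsilon(\psi_n)$, where $t_\varepsilon(s):=\mathrm{sign}(s)\min(|s|,\varepsilon)$. By the chain rule and dominated convergence (using $\nabla\psi_n=0$ a.e.\ on $\{\psi_n=0\}$), $\Psi_{n,\varepsilon}\to\psi_n$ in $W^{1,p}(\R^d)$ as $\varepsilon\to 0$. The support of $\Psi_{n,\varepsilon}$ lies in the closed set $\{|\psi_n|\ge\varepsilon\}$, which is compact (being inside $\supp\psi_n$) and disjoint from $F$ (because $\psi_n\equiv 0$ on $F$); hence it lies at a strictly positive distance $\delta_{n,\varepsilon}>0$ from $F$. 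Convolving $\Psi_{n,\varepsilon}$ with a smooth mollifier of radius less than $\delta_{n,\varepsilon}/2$ yields a function in $C^\infty_F(\R^d)$ approximating $\Psi_{n,\varepsilon}$ in $W^{1,p}(\R^d)$, and a diagonal argument in $n,\varepsilon$, and the mollifier radius produces the desired sequence in $C^\infty_F(\R^d)$ converging to $\psi$.

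The main obstacle is the correction step: producing an approximant that is simultaneously continuous and identically zero on $F$, so that a sublevel truncation actually vanishes on the \emph{open} neighbourhood $\{|\psi_n|<\varepsilon\}$ of $F$ (rather than only in the trace sense). Theorem~\ref{t-Jons}---the preservation of Lipschitz continuity by $\mathcal E_F$---is precisely what supplies this pointwise control and lets us bypass the Hardy-type inequality on $(d-1)$-sets that would otherwise be required.
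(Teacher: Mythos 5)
Your proof is correct and follows the same overall strategy as the paper: show $W^{1,p}_F\subseteq\mathcal W^{1,p}_F$ by continuity of $\mathcal R_F$, then for the converse approximate $\psi$ by $\mathcal P\tilde\psi_n=\tilde\psi_n-\mathcal E_F\mathcal R_F\tilde\psi_n$ (the paper's same projector applied to a $C^\infty_c$-approximating sequence), invoke Theorem~\ref{t-Jons} to get Lipschitz continuity, observe the approximant vanishes identically on $F$, cut it off near $F$, and mollify. The one place you genuinely deviate is the cut-off step. The paper truncates with the distance function, $f_n=f\,(\zeta_n\circ\dist_F)$, and controls the troublesome term $f\nabla w_n$ via the Hardy-type bound ``$f/\dist_F$ is bounded'' (which itself comes from Lipschitz continuity plus vanishing on $F$). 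You instead remove the sublevel set $\{|\psi_n|<\varepsilon\}$ via $\Psi_{n,\varepsilon}=\psi_n-t_\varepsilon(\psi_n)$, relying only on the elementary fact $\nabla\psi_n=0$ a.e.\ on $\{\psi_n=0\}$ and dominated convergence, and on the observation that $\{|\psi_n|\ge\varepsilon\}$ is at positive distance from $F$. This avoids the Hardy-type estimate altogether and is, in that respect, a little more elementary. Two small remarks: (i) the compact-support analysis of the Whitney formula, while correct, is not actually needed here -- since $\psi_n$ is Lipschitz with some constant $L$ and vanishes on $F$, one has directly $\dist(x,F)\ge\varepsilon/L$ on $\{|\psi_n|\ge\varepsilon\}$, which gives the positive distance without any support bookkeeping; (ii) for complex-valued functions $t_\varepsilon$ should be read as $t_\varepsilon(s)=s\min(1,\varepsilon/|s|)$, after which the chain-rule argument goes through unchanged. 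Neither point affects the validity of the argument.
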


\begin{proof}
 The inclusion $W^{1,p}_F(\R^d) \subseteq \mathcal W^{1,p}_F(\R^d)$ is implied
 by the Jonsson/Wallin result: all functions $\psi$ from $C^\infty_F(\R^d)$
 vanish in a neighbourhood of $F$ and, hence, have trace $0$ on $F$,
 i.e.\@ $\mathcal R_F \psi = 0$. Since the trace is a continuous operator into
 $L^1(F;\rho)$, this remains true for all elements from $W^{1,p}_F(\R^d)$.

 Conversely, assume $\psi \in \mathcal W^{1,p}_F(\R^d)$. By the definition of
 the projector $\mathcal P = 1 - \mathcal E_F \mathcal R_F$ one has $\mathcal P
 \psi = \psi$. Since $ \psi \in \mathcal W_F^{1,p}(\R^d) \subset
 W^{1,p}(\R^d)$, there is a sequence $\{\psi _k\}_k$ from $C_0^\infty(\R^d)$
 that converges towards $ \psi$ in the $W^{1,p}(\R^d)$ topology. Clearly, then
 $\mathcal P \psi_k \to \mathcal P \psi = \psi$, and the elements $\mathcal P
 \psi_k$ fulfill, by the definition of $\mathcal P$, the condition $\mathcal P
 \psi_k = 0$ a.e.\@ on $F$ with respect to $\rho$. Thus we have $\mathcal R_F
 (\mathcal P \psi_k) = 0$.

 We fix $k$ and denote $\mathcal P \psi_k$ by $f$ for brevity. Our intention is
 to show:
 \begin{equation} \label{Sternchen}
  \text{There exists } g \in C^\infty(\R^d) \text{ with } \supp(g) \cap F =
	\emptyset \text{ and } \| f - g \|_{W^{1,p}(\R^d)} \le \frac {1}{k}.
 \end{equation}
 
 By the construction of the projector $\mathcal P = 1 - \mathcal E_F
 \mathcal R_F$ and the Jonsson/Wallin results in Proposition~\ref{p-JW} the
 function $f$ is Lipschitzian and vanishes almost everywhere on $F$. We will
 now show that, in fact, it vanishes identically on $F$. Let $\mathrm x \in F$
 be an arbitrary point. Then, for every $r > 0$, one has $\rho(F \cap
 B(\mathrm x,r)) > 0$ because $F$ is a $(d-1)$-set. Thus, in this ball there is
 a point $\mathrm y \in F$ for which $f(\mathrm y) = 0$ holds. Hence,
 $\mathrm x$ is an accumulation point of the set on which $f$ vanishes, and the
 claim follows from the continuity of $f$.

 Let now $\{\zeta_n\}_n$ be the sequence of cut-off functions, defined on
 $\R_+$ by
 \[ \zeta_n(t) = \begin{cases}
		0, & \text{if } 0  \le t  \le 1/n, \\
		nt - 1, & \text{if } 1/n  \le t  \le 2/n, \\
		1,  & \text{if } 2/n < t.
	\end{cases}
 \]
 Note that for $t \neq 0$ the values $\zeta_n(t)$ tend to $1$ as $n \to
 \infty$. Moreover, one has $0 \le  t \zeta'_n(t) \le 2$ and  $t \zeta'_n(t)$
 tends to $0$ for all $t$. We denote by $\dist_F : \R^d \to \R_+$ the function
 which measures the distance to the set $F$. Note that $\dist_F$ is
 Lipschitzian with Lipschitz constant $1$. Hence, it is a.e.\@ differentiable
 with $|\nabla \dist_F| \le 1$, see \cite[Ch.~4.2.3]{ev/gar}. Define $w_n :=
 \zeta_n \circ \dist_F$. Note that $w_n \to 1$ almost everywhere in $\R^d$ when
 $n \to \infty$. Moreover, since $\zeta_n$ is piecewise smooth, one calculates,
 according to the chain rule (see \cite[Ch.~7.4]{gil}),
 \[ \nabla w_n(x) = \begin{cases}
		\zeta'_n (\dist_F(\mathrm{x})) \nabla \dist_F(\mathrm{x}), & 
			\text{if } \dist_F(\mathrm{x}) \in \bigl] \frac 1n,
			\frac 2n \bigr[, \\
		0, & \text{else}.
	\end{cases}
 \]
 Since $|\nabla \dist_F| \le 1$ a.e., $\dist_F \nabla w_n$ is uniformly (in
 $n$) bounded a.e.\@ and converges a.e.\@ to $0$ as $n \to \infty$.  Let $f_n
 = f w_n$. We claim that $f_n - f = f (1 - w_n) \to 0$ in $W^{1,p}(\R^d)$. By
 the dominated convergence theorem, $f ( 1 - w_n ) \to 0$ in $L^p(\R^d)$ since
 $w_n \to 1$. Now, for the gradient holds
 \[ \nabla (f_n - f) = (1 - w_n) \nabla f + f \nabla w_n \quad
	\text{a.e.\@ on } \R^d.
 \]
 Again by the dominated convergence theorem, the first term converges to $0$
 in $L^p(\R^d)$. It remains to prove that $\|f \nabla w_n\|_{L^p} \to 0$. We
 have
 \begin{equation} \label{e-lpnorm}
  \| f \nabla w_n \|^p_{L^p} = \int_{\R^d} \Bigl| \frac {f}{\dist_F} \Bigr|^p
	\Bigl| \dist_F \nabla w_n \Bigr|^p \mathrm{d} \mathrm{x}.
 \end{equation}
 Due to the fact that $f$ vanishes identically on $F$ and the Lipschitz
 property of $f$, the function $\frac {f}{\dist_F}$ is bounded. Hence, again
 dominated convergence yields $f\nabla w_n \to 0$ in $L^p(\R^d)$. The support
 of each function $f_n$ has a positive distance to the set $F$. Thus, it
 suffices to convolve a function $f_n$ (according to a sufficiently high index
 $n$) with a smooth mollifying function with small support to obtain $g$, which
 proves \eqref{Sternchen}.
Finally, the assertion follows from a $3\epsilon$ argument.
\end{proof}
\begin{corollary} \label{c-coincid}
 Let $B \subset \R^d$ be an open ball and $F \subset B$ be a $(d-1)$-set. Then
 the spaces $ W^{1,p}_F(B)$ and $\mathcal W^{1,p}_F(B)$ in fact coincide for $p
 \in {]1,\infty[}$.
\end{corollary}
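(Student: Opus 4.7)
The plan is to bootstrap Corollary~\ref{c-coincid} from Theorem~\ref{t-coincid} via a simple extension/restriction argument, exploiting that the ball $B$ is a smooth (in fact convex) domain.

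For the inclusion $W^{1,p}_F(B) \subseteq \mathcal W^{1,p}_F(B)$ I would argue exactly as in the proof of Theorem~\ref{t-coincid}: every $\psi \in C^\infty_F(B)$ is by definition the restriction of a function in $C^\infty(\R^d)$ whose support avoids $F$, so $\mathcal R_F \psi = 0$; continuity of $\mathcal R_F$ from $W^{1,p}(B)$ into $L^1(F,\rho)$---a consequence of the local nature of the restriction combined with the Jonsson/Wallin trace result---then transfers the vanishing of the trace to the closure $W^{1,p}_F(B)$.

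For the converse inclusion $\mathcal W^{1,p}_F(B) \subseteq W^{1,p}_F(B)$ I would proceed by reducing to the ambient-space statement. Given $\psi \in \mathcal W^{1,p}_F(B)$, since $B$ is a smooth bounded domain there is a bounded linear Sobolev extension operator $\mathfrak J \colon W^{1,p}(B) \to W^{1,p}(\R^d)$; set $\tilde\psi := \mathfrak J\psi$. Because $F \subset B$ and $\tilde\psi|_B = \psi$, and because $\mathcal R_F$ depends only on the local behaviour of its argument near $F$ (see the discussion preceding Definition~\ref{d-modspace}), one gets $\mathcal R_F \tilde\psi = \mathcal R_F \psi = 0$. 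Hence $\tilde\psi \in \mathcal W^{1,p}_F(\R^d)$, and Theorem~\ref{t-coincid} yields $\tilde\psi \in W^{1,p}_F(\R^d)$. Approximating by a sequence $\{\psi_k\}_k \subset C^\infty_F(\R^d)$ with $\psi_k \to \tilde\psi$ in $W^{1,p}(\R^d)$, the restrictions $\psi_k|_B$ lie in $C^\infty_F(B)$ by Definition~\ref{d-1} and converge to $\tilde\psi|_B = \psi$ in $W^{1,p}(B)$, giving $\psi \in W^{1,p}_F(B)$.

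The only delicate points I expect are the existence of a bounded $W^{1,p}$-extension operator $\mathfrak J$ for each $p \in {]1,\infty[}$, which is classical since $B$ has smooth boundary, and the compatibility of the trace $\mathcal R_F$ with restriction from $\R^d$ to $B$. The second is precisely why the remark preceding Definition~\ref{d-modspace} was inserted: the Jonsson/Wallin restriction is defined pointwise via Lebesgue averages over balls shrinking to points of $F$, and is therefore insensitive to the behaviour of a function away from $F$. Consequently I do not anticipate any serious technical obstacle; the corollary is essentially a localization of the $\R^d$-version already established.
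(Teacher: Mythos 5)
Your argument is exactly the paper's own: extend $\psi\in\mathcal W^{1,p}_F(B)$ to $\widehat\psi\in W^{1,p}(\R^d)$ (possible since $B$ is a ball), observe that $F\subset B$ and the local character of $\mathcal R_F$ give $\widehat\psi\in\mathcal W^{1,p}_F(\R^d)$, invoke Theorem~\ref{t-coincid} to approximate $\widehat\psi$ by $C^\infty_F(\R^d)$-functions, and restrict to $B$. The proposal is correct and matches the paper's proof in both structure and detail.
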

\begin{proof}
The inclusion $ W^{1,p}_F(B) \subseteq \mathcal W^{1,p}_F(B)$ is clear. Conversely,
 let, for any function $\psi \in \mathcal W^{1,p}_F(B)$, $\widehat \psi$ be a
 $W^{1,p}(\R^d)$-extension. Since $F$ is contained in $B$ we still have
 $\widehat \psi \in \mathcal W^{1,p}_F(\R^d)$. Hence, due to
 Theorem~\ref{t-coincid}, the function $\widehat \psi$ may be approximated in
 the $W^{1,p}(\R^d)$-norm by a sequence $\{\psi_k\}_k$ from $C^\infty_F(\R^d)$.
 Evidently, then $\psi$ may be approximated by the sequence $\{\psi_k|_B\}_k$
 in the $W^{1,p}(B)$-norm.
\end{proof}
\begin{rem} \label{r-cutoff}
 \begin{enumerate} 
 \item The basic idea for the proof of Theorem~\ref{t-coincid} is analogous to
	that in \cite[Prop.~3.12]{jer/ke}.
 \item Seemingly, the coincidence of the spaces $W^{1,p}_F(B)$ and $\mathcal
	W^{1,p}_F(B)$ is only of limited, more technical interest. This,
	however, is not the case: on the one hand it is often considerably
	simpler to prove that a certain function belongs to the space
	$\mathcal W^{1,p}_F$, see the proof of Theorem~\ref{l-extension} below,
        compare also \cite[Ch.~VIII.1]{jons} or
	\cite[Ch.~6.6]{mazyasob}. On the other hand, it is of course often
	more comfortable, if one has to prove a certain property for all
	elements from $\mathcal W^{1,p}_F(B)$ and may confine oneself, by
	density, to the functions from $C^\infty_F(B)$.
\item Theorem~\ref{t-coincid} heavily rests on the property of $F$ to be a
	$(d-1)$-set: suppose e.g.\@ $p>d$ and assume that $\mathrm{x} \in F$
	is an isolated point. Then, for every $\psi \in C^\infty_F(\Omega)$
	one has $\psi(\mathrm{x}) = 0$, what clearly extends to all $\psi \in
	W^{1,p}_F(\Omega)$, since the Dirac measure $\delta_\mathrm{x}$ is a
	continuous linear form on $W^{1,p}(\Omega)$. On the other hand, the
	condition $\mathcal R_F \psi = 0$ a.e.\@ on $F$ does not impose a
	condition on $\psi$ in the point $\mathrm{x}$ because $\{ \mathrm{x}
	\}$ is of measure $0$ with respect to $\rho=\mathcal H_{d-1}|_F$.
\end{enumerate}
\end{rem}

\begin{corollary} \label{c-remaintrue}
 Concerning real and complex interpolation, Theorem~\ref{t-1interpol} remains
 true, if there $\mathcal W^{1,p}_F(B)$ is replaced by $W^{1,p}_F(B)$. In
 particular, one has for $p_0, p_1 \in {]1,\infty[}$ and $\frac 1p =
 \frac{1 - \theta}{p_0} + \frac{\theta}{p_1}$
 \[ \bigl[ W^{1,p_0}_F(B), W^{1,p_1}_F(B) \bigr]_\theta = W^{1,p}_F(B) =
	\bigl( W^{1,p_0}_F(B), W^{1,p_1}_F(B) \bigr)_{\theta,p}.
 \]
\end{corollary}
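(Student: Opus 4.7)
The plan is to deduce this directly from Corollary~\ref{c-coincid} together with Theorem~\ref{t-1interpol}. By Corollary~\ref{c-coincid}, the spaces $W^{1,p}_F(B)$ and $\mathcal{W}^{1,p}_F(B)$ coincide for every $p\in{]1,\infty[}$, and they inherit the same norm as closed subspaces of $W^{1,p}(B)$. Consequently, the identity map $W^{1,p}_F(B)\to\mathcal{W}^{1,p}_F(B)$ is an isometric isomorphism of Banach spaces for each $p$. Any interpolation functor applied to a compatible couple depends only on the Banach space structure of its ingredients, so the interpolation identities of Theorem~\ref{t-1interpol} transfer verbatim, yielding the first half of the claim with no further work.

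It remains to identify the intermediate space explicitly as $W^{1,p}_F(B)$ for $\frac{1}{p}=\frac{1-\theta}{p_0}+\frac{\theta}{p_1}$. Here I would invoke the classical interpolation formulas for $W^{1,p}(B)$, namely
\[
 \bigl[W^{1,p_0}(B),W^{1,p_1}(B)\bigr]_\theta = W^{1,p}(B) = \bigl(W^{1,p_0}(B),W^{1,p_1}(B)\bigr)_{\theta,p},
\]
which hold because $B$ is smooth (or, alternatively, via retraction from $W^{1,p}(\R^d)$). Since the projector $\mathcal{P}=1-\mathcal{E}_F\mathcal{R}_F$ from the discussion before Theorem~\ref{t-1interpol} is uniformly bounded in $p$ and maps $W^{1,p}(B)$ onto $\mathcal{W}^{1,p}_F(B)$, the interpolation principle for complemented subspaces (\cite[Ch.~1.17.1]{triebel}) yields
\[
 \bigl[\mathcal{W}^{1,p_0}_F(B),\mathcal{W}^{1,p_1}_F(B)\bigr]_\theta = \mathcal{W}^{1,p}_F(B) = \bigl(\mathcal{W}^{1,p_0}_F(B),\mathcal{W}^{1,p_1}_F(B)\bigr)_{\theta,p}.
\]
Substituting $\mathcal{W}^{1,q}_F(B) = W^{1,q}_F(B)$ everywhere gives the displayed formula of the corollary.

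I do not anticipate a real obstacle here: the whole content of the statement is already absorbed in Corollary~\ref{c-coincid} (which did the analytic work through the cut-off construction in Theorem~\ref{t-coincid}) and in the functorial interpolation argument underlying Theorem~\ref{t-1interpol}. The only minor point to double-check is that $\mathcal{W}^{1,p_0}_F(B)\cap\mathcal{W}^{1,p_1}_F(B)$ is dense in each $\mathcal{W}^{1,p}_F(B)$, which is needed to make the complemented-subspace principle applicable; this follows because $\mathcal{P}$ commutes with the identity and the corresponding density is already known for the couple $\{W^{1,p_0}(B),W^{1,p_1}(B)\}$.
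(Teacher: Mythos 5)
Your proof of the displayed $(\theta,p)$-formula is correct and is exactly what the paper does: it is immediate from Corollary~\ref{c-coincid} (which identifies $W^{1,p}_F(B)$ with $\mathcal W^{1,p}_F(B)$) together with Theorem~\ref{t-1interpol}. Your middle paragraph re-derives Theorem~\ref{t-1interpol} from the complemented-subspace principle, which is logically harmless but redundant — you may simply cite it. Your closing worry about density of $\mathcal W^{1,p_0}_F(B)\cap\mathcal W^{1,p_1}_F(B)$ in $\mathcal W^{1,p}_F(B)$ is also unnecessary: the complemented-subspace theorem in \cite[Ch.~1.17.1]{triebel} requires no such density hypothesis; it is just the retraction/coretraction principle applied to the projector $\mathcal P$ and the identity.

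Where you genuinely diverge from the paper is on the first, general clause of the corollary, that "Theorem~\ref{t-1interpol} remains true if $\mathcal W^{1,p}_F(B)$ is replaced by $W^{1,p}_F(B)$." For real interpolation with a second index $q\neq p$, the intermediate space is not a Sobolev space but a Besov-type space with the trace-zero condition, i.e.\@ $\mathcal P$ applied to a Besov space, and the analogue of passing from $\mathcal W$ to $W$ is to identify this intermediate space as the \emph{closure of $C^\infty_F(B)$} in the relevant Besov topology. The paper proves this by a density argument (starting from the density of $C^\infty_F(B)$ in $W^{1,p_1}_F(B)$, which in turn is dense in the interpolation space, whose topology is weaker) — and this is precisely what motivates the exclusion of the index $(\theta,\infty)$ in Remark~\ref{r-excluded}. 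Your reasoning that "interpolation functors depend only on the Banach space structure" transfers the set equalities among the $\mathcal W^{1,p}_F(B)$ and the $W^{1,p}_F(B)$, but it says nothing about whether the Besov-type intermediate spaces are closures of test functions, so it establishes the displayed $(\theta,p)$-identity but does not cover the general real interpolation claim without this extra density step.
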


\begin{proof}
 The assertion concerning complex interpolation is immediate from
 Theorem~\ref{t-1interpol} and Theorem~\ref{t-coincid}, which also imply the
 right equality. Considering real interpolation, one gets
 \begin{equation} \label{e-intspace}
  \bigl( W^{1,p_0}_F(B), W^{1,p_1}_F(B) \bigr)_{\theta,q} = \bigl(
	\mathcal W^{1,p_0}_F(B), \mathcal W^{1,p_1}_F(B) \bigr)_{\theta,q}.
 \end{equation}
 According to Theorem~\ref{t-1interpol}, the right hand side is some Besov
 space (see \cite[Ch.~2.4.2]{triebel}) including again the trace-zero
 condition on $F$. It is clear that $C^\infty_F(B)$ is contained in this
 space. What remains to show is that $C^\infty_F(B)$ is also dense in this
 space.

 Let us suppose, without loss of generality, $p_1 > p_0$. By definition,
 $C^\infty_F(B)$ is dense in $W^{1,p_1}_F(B)$ with respect to its natural
 topology. Moreover, $W^{1,p_1}_F(B)$ is dense in the interpolation space
 \eqref{e-intspace} (see \cite[Ch.~1.6.2]{triebel}), and the topology of
 this interpolation space is weaker than that of $W^{1,p_1}_F(B)$. Hence,
 $C^\infty_F(B)$ is indeed dense in the corresponding interpolation space, or,
 in other words: the interpolation space is the closure of $C^\infty_F(B)$
 with respect to the corresponding Besov topology.
\end{proof}
\begin{rem} \label{r-excluded}
 Concerning real interpolation, the interpolation indices $(\theta,\infty)$
 have indeed to be excluded, compare \cite[Ch.~1.6.2]{triebel}. The crucial
 point is that the smaller space has to remain dense in the corresponding
 interpolation space.
\end{rem}

We now turn to the proof of Theorem~\ref{t-1interpolgen}. We first introduce
the following definition.

\begin{definition} \label{d-ext/res}
 We denote by $\mathfrak R : W^{1,p}(B) \to W^{1,p}(\Omega)$ the canonical
 restriction operator.
\end{definition}

\begin{rem} \label{r-beschrgeb}
 It is not hard to see that the canonical restriction operator $\mathfrak R :
 W^{1,p}(B) \to W^{1,p}(\Omega)$ gives rise to a restriction operator
 $\mathfrak R : W_D^{1,p}(B) \to W^{1,p}_D(\Omega)$ -- for which we also
 maintain the notation $\mathfrak R$. Note that $\mathfrak E$ and $\mathfrak R$
 are consistent on the sets $\{W_D^{1,p}(B)\}_{p \in [1,\infty[}$ and
 $\{W_D^{1,p}(\Omega)\}_{p \in [1,\infty[}$: if $q > p$, then $\mathfrak R :
 W_D^{1,q}(B) \to W_D^{1,q}(\Omega)$ is the restriction of $\mathfrak R :
 W_D^{1,p}(B) \to W_D^{1,p}(\Omega)$ and $\mathfrak E : W_D^{1,q}(\Omega) \to
 W_D^{1,q}(B)$ is the restriction of $\mathfrak E : W_D^{1,p}(\Omega) \to
 W_D^{1,p}(B)$.

 Finally, one observes that, for every $p \in [1,\infty[$, the operators
 $\mathfrak R : W_D^{1,p}(B) \to W^{1,p}_D(\Omega)$ and $\mathfrak E:
 W_D^{1,p}(\Omega) \to W^{1,p}_D(B)$ form a retraction/coretraction pair, see
 \cite[Ch.~1.2.4]{triebel}.
\end{rem}


\begin{proof}[Proof of Theorem~\ref{t-1interpolgen}]
 Let $B \supset \overline \Omega$ be the ball introduced above. Firstly,
 Corollary~\ref{c-remaintrue} shows how the spaces from the family
 $\{W^{1,p}_D(B)\}_{p \in {]1,\infty[}}$ interpolate. Secondly, the
 extension/restriction operators $\mathfrak E/ \mathfrak R$ (compare
 Remark~\ref{r-remain}~\ref{r-remain:ii})
 together with the retraction/coretraction theorem, see
 \cite[Ch.~1.2.4]{triebel}, allow to carry over interpolation between spaces
 from $\{ W^{1,p}_D(B) \}_{p \in {]1,\infty[}}$ to the spaces from
 $\{ W^{1,p}_D(\Omega) \}_{p \in {]1,\infty[}}$.
\end{proof}

\section{The extension operator} \label{extens}
As already mentioned in Remark~\ref{r-remain}, the condition of the
extendability for $W^{1,p}_D(\Omega)$ within the same class is an abstract one
which should be supported by geometric conditions on $\Omega$ and on $D$. We
will do this within this section. In a first step we will establish three
general principles. First, we open the possibility of passing from the domain
$\Omega$ to another domain $\Omega_\bullet$ with a reduced Dirichlet boundary
part, while $\Gamma = \partial \Omega \setminus D$ remains part of
$\partial \Omega_\bullet$. In most cases this improves the boundary geometry in
view of the $W^{1,p}$-extendability, see the example in
Figure~\ref{fig-Meissel} above. Secondly, we show that only the local geometry
of the domain around the boundary part $\Gamma$ plays a role for the existence
of such an extension operator. Thirdly, we prove that -- under very general
geometric assumptions -- the extended functions do have the adequate trace behavior on
$D$ for \emph{every} extension operator.

In the second subsection we then give conditions for geometries around the
boundary part $\Gamma$, which do -- together with the results from\
Subsection~\ref{subsec:Sob1} -- really imply the validity of
Assumption~\ref{a-extendgeneral}.

\subsection{Sobolev extension: general features} \label{subsec:Sob1}
The first point we address is the following: as in Figure~\ref{fig-Meissel}
there may be boundary parts which carry a Dirichlet condition and belong to the
inner of the closure of the domain under consideration. Then one can extend the
functions on $\Omega$ by $0$ to such boundary part, thereby enlarging the
domain and simplifying the boundary geometry. In the following we make this
precise.
\begin{lemma} \label{l-modigeb}
 Let $\Omega \subset \R^d$ be a domain and let $E \subset \partial \Omega$ be
 compact. Define $\Omega_\bullet$ as the interior of the set $\Omega \cup E$.
 Then the following holds true.
 \begin{enumerate} 
  \item The set $\Omega_\bullet$ is again a domain, $\Xi := \partial \Omega
	\setminus E$ is a (relatively) open subset of $\partial \Omega_\bullet$
	and $\partial \Omega_\bullet = \Xi \cup (E \cap \partial
	\Omega_\bullet)$.
 \item Extending functions from $W^{1,p}_E(\Omega)$ by $0$ to $\Omega_\bullet$,
	one obtains an isometric extension operator
	$\mathrm{Ext}(\Omega,\Omega_\bullet)$ from $W^{1,p}_E(\Omega)$ onto
	$W^{1,p}_E(\Omega_\bullet)$.
 \end{enumerate}
\end{lemma}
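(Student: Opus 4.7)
\emph{Proof plan.} For part (i) I would first pin down the topology. The chain $\Omega \subseteq \Omega_\bullet \subseteq \Omega \cup E \subseteq \overline{\Omega}$ gives $\overline{\Omega_\bullet} = \overline{\Omega}$ and hence $\partial\Omega_\bullet = \overline{\Omega}\setminus\Omega_\bullet \subseteq \overline{\Omega}\setminus\Omega = \partial\Omega$. Since $\Omega$ is a connected dense subset of $\Omega_\bullet$, $\Omega_\bullet$ is connected, hence a domain. To show $\Xi \subseteq \partial\Omega_\bullet$: pick $x\in\partial\Omega\setminus E$; then $x\in\overline{\Omega}=\overline{\Omega_\bullet}$, and since $E$ is closed, a small ball $B(x,r)$ avoids $E$; as $x\in\partial\Omega$, no ball around $x$ lies in $\Omega$, so $B(x,r)$ cannot lie in $\Omega\cup E$, which rules out $x\in\Omega_\bullet$. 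Relative openness of $\Xi$ in $\partial\Omega_\bullet$ follows immediately from $\Xi = \partial\Omega_\bullet \cap E^c$ and compactness (hence closedness) of $E$; the decomposition $\partial\Omega_\bullet = \Xi \cup (E\cap\partial\Omega_\bullet)$ is then tautological.

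For part (ii) my strategy is to define the extension on the dense subspace $C^\infty_E(\Omega)$, verify that it is a $W^{1,p}$-isometry whose image lies in $C^\infty_E(\Omega_\bullet)$, and extend by continuity. Take $\varphi = \psi|_\Omega$ with $\psi \in C^\infty(\R^d)$, $\supp\psi\cap E=\emptyset$. The key point is that the extension by zero of $\varphi$ to $\Omega_\bullet$ coincides with $\psi|_{\Omega_\bullet}$: on $\Omega$ the two agree, and on the added set $\Omega_\bullet\setminus\Omega\subseteq E$ both vanish (the extension by construction, and $\psi$ because $\supp\psi\cap E=\emptyset$). Consequently $\mathrm{Ext}(\Omega,\Omega_\bullet)\varphi \in C^\infty_E(\Omega_\bullet)$, and since both $\psi$ and $\nabla\psi$ vanish on $\Omega_\bullet\setminus\Omega$, the $W^{1,p}$ norms over $\Omega$ and $\Omega_\bullet$ agree, giving the isometry.

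By density, the map extends uniquely to an isometry $W^{1,p}_E(\Omega)\to W^{1,p}_E(\Omega_\bullet)$. For surjectivity, given $f \in W^{1,p}_E(\Omega_\bullet)$ I would take an approximating sequence $\varphi_n \in C^\infty_E(\Omega_\bullet)$; the restrictions $\varphi_n|_\Omega$ lie in $C^\infty_E(\Omega)$, form a Cauchy sequence in $W^{1,p}(\Omega)$ by the trivial bound $\|\varphi_n|_\Omega\|_{W^{1,p}(\Omega)} \le \|\varphi_n\|_{W^{1,p}(\Omega_\bullet)}$, and converge to some $f_0 \in W^{1,p}_E(\Omega)$. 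Then $\mathrm{Ext}(\Omega,\Omega_\bullet) f_0 = f$, because $\mathrm{Ext}(\Omega,\Omega_\bullet)(\varphi_n|_\Omega) = \varphi_n$ in $W^{1,p}(\Omega_\bullet)$ and both sides are continuous in their arguments.

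\emph{Expected main obstacle.} The principal subtlety is in part (i), namely the clean identifications $\partial\Omega_\bullet\subseteq\partial\Omega$ and $\Xi\subseteq\partial\Omega_\bullet$, which must be carried out without any boundary regularity beyond the closedness of $E$. In (ii), the only mild delicacy is that $E$ may carry positive Lebesgue measure; this is neutralised by the stronger fact that $\psi$ vanishes on all of $E$ (not merely almost everywhere), so that the added piece of the domain really contributes zero to both the function and its gradient.
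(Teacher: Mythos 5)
Your proposal is correct and follows essentially the same route as the paper: part (i) via $\overline{\Omega_\bullet}=\overline{\Omega}$, $\partial\Omega_\bullet\subseteq\partial\Omega$, and $\Xi\cap\Omega_\bullet=\emptyset$ (your ball argument is a more verbose version of the paper's observation that $\Xi$ meets neither $\Omega$ nor $E$, hence not $\Omega_\bullet\subseteq\Omega\cup E$), and part (ii) by defining zero-extension on the dense subspace $C^\infty_E(\Omega)$, noting it lands isometrically in $C^\infty_E(\Omega_\bullet)$ because the added set $\Omega_\bullet\setminus\Omega\subseteq E$ lies in a neighbourhood where $\psi$ vanishes, and then passing to closures. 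The only cosmetic difference is that the paper asserts the dense-level map is already onto $C^\infty_E(\Omega_\bullet)$ and lets the commutative diagram do the rest, whereas you argue surjectivity at the $W^{1,p}$ level via an approximating sequence; both are sound.
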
 
\begin{proof}
 \begin{enumerate}
  \item Due to the connectednes of $\Omega$ and the set inclusion $\Omega \subset
	\Omega_\bullet \subset \overline \Omega$, the set $\Omega_\bullet$ is
	also connected, and, hence, a domain. Obviously, one has
	$\overline{\Omega_\bullet} = \overline \Omega$. This, together with the
	inclusion $\Omega \subset \Omega_\bullet$ leads to
	$\partial \Omega_\bullet \subset \partial \Omega$. Since $\Xi \cap
	\Omega_\bullet = \emptyset$, one gets $\Xi \subset
	\partial \Omega_\bullet$. Furthermore, $\Xi$ was relatively open in
	$\partial \Omega$, the more it is relatively open in
	$\partial \Omega_\bullet$.

	The last asserted equality follows from $\partial \Omega_\bullet = (
	\Xi \cap \partial \Omega_\bullet) \cup (E \cap
	\partial \Omega_\bullet)$ and $\Xi \subset \partial \Omega_\bullet$.
  \item Consider any $\psi \in C^\infty_E(\R^d)$ and its restriction
	$\psi|_\Omega$ to $\Omega$. Since the support of $\psi$ has a positive
	distance to $E$, one may extend $\psi|_\Omega$ by $0$ to the whole of
	$\Omega_\bullet$ without destroying the $C^\infty$--property.
        Thus, this extension operator
	provides a linear isometry from $C^\infty_E(\Omega)$ \emph{onto}
	$C^\infty_E(\Omega_\bullet)$ (if both are equipped with the 
	$W^{1,p}$-norm). This extends to a linear extension operator
	$\mathrm{Ext}(\Omega,\Omega_\bullet)$ from $W^{1,p}_E(\Omega)$ onto
	$W^{1,p}_E(\Omega_\bullet)$, see the two following commutative
	diagrams:

 \begin{equation*}
 \begin{tikzcd}[row sep=large, column sep=huge]
 	C^{\infty}_E (\mathbb{R}^d)
 	\arrow{r}{\operatorname*{restrict}_{\mathbb{R}^d\to\Omega}} 
 	\arrow{d}[swap]{\operatorname*{restrict}_{\mathbb{R}^d\to{\Omega_\bullet}}}
	& C^{\infty}_E (\Omega)
	\arrow{ld}{\operatorname*{extend}_{\Omega\to {\Omega_\bullet}}}
	\\  
	C^{\infty}_E ({\Omega_\bullet})
 \end{tikzcd}\qquad
 \begin{tikzcd}[row sep=large, column sep=huge]
	W^{1,p}_E (\mathbb{R}^d)
 	\arrow{r}{\operatorname*{restrict}_{\mathbb{R}^d\to\Omega}} 
 	\arrow{d}[swap]{\operatorname*{restrict}_{\mathbb{R}^d\to{\Omega_\bullet}}}
 	& W^{1,p}_E (\Omega)
 	\arrow{ld}{\operatorname*{extend}_{\Omega\to {\Omega_\bullet}}}
 	\\  
 	W^{1,p}_E ({\Omega_\bullet})
 \end{tikzcd}
 \end{equation*}
\end{enumerate}
\end{proof}
\begin{rem} \label{r-arbitrar}
 \begin{enumerate}
  \item The reader should notice that no assumptions on $E$ beside compactness
	  are necessary.
  \item Observe that, after having extended the functions, being defined on
	$\Omega$, to $\Omega_\bullet$, the 'Dirichlet crack' $\Sigma$ in 
	Figure~\ref{fig-Meissel} has vanished, and one ends up with the whole
	cube. Here the problem of extending Sobolev functions is almost
	trivial. We suppose that this is the generic case -- at least for
	applied problems.
 \end{enumerate} 
\end{rem}
The above considerations suggest the following procedure: extend the functions
from $W^{1,p}_E(\Omega)$ first to $\Omega_\bullet$, and afterwards to the whole
of $\R^d$. The next lemma will show that this approach is universal.
\begin{lemma} \label{l-exten0}
 Every linear, continuous extension operator $\mathfrak F : W^{1,p}_E(\Omega)
 \to W^{1,p}_E(\R^d)$ factorizes in the following manner: there is a linear,
 continuous extension operator ${\mathfrak F}_\bullet :
 W^{1,p}_E(\Omega_\bullet) \to W^{1,p}_E(\R^d)$, such that $\mathfrak F =
 {\mathfrak F}_\bullet \mathrm{Ext}(\Omega, \Omega_\bullet)$.
\end{lemma}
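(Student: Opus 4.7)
The plan is to define $\mathfrak{F}_\bullet$ by precomposing $\mathfrak{F}$ with the inverse of the extension-by-zero isomorphism supplied by Lemma~\ref{l-modigeb}. That lemma tells us that $\mathrm{Ext}(\Omega,\Omega_\bullet)$ is an isometric isomorphism from $W^{1,p}_E(\Omega)$ \emph{onto} $W^{1,p}_E(\Omega_\bullet)$, hence its inverse $\mathrm{Ext}(\Omega,\Omega_\bullet)^{-1}$ is a well-defined bounded linear isomorphism in the opposite direction (concretely, it is simply restriction to $\Omega$). I therefore set
\[
  \mathfrak{F}_\bullet := \mathfrak{F}\circ\mathrm{Ext}(\Omega,\Omega_\bullet)^{-1}.
\]

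With this definition, linearity and continuity of $\mathfrak{F}_\bullet: W^{1,p}_E(\Omega_\bullet)\to W^{1,p}_E(\R^d)$ are automatic as a composition of two bounded linear maps, and composing both sides with $\mathrm{Ext}(\Omega,\Omega_\bullet)$ from the right immediately yields the asserted factorization $\mathfrak{F} = \mathfrak{F}_\bullet\,\mathrm{Ext}(\Omega,\Omega_\bullet)$.

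What still requires verification is that $\mathfrak{F}_\bullet$ is indeed an \emph{extension} operator, i.e.\ that $(\mathfrak{F}_\bullet u)|_{\Omega_\bullet} = u$ for every $u\in W^{1,p}_E(\Omega_\bullet)$. Writing $v := \mathrm{Ext}(\Omega,\Omega_\bullet)^{-1}u = u|_\Omega\in W^{1,p}_E(\Omega)$, the extension property of $\mathfrak{F}$ gives $(\mathfrak{F} v)|_\Omega = v = u|_\Omega$, so the required identity holds on $\Omega$. On the complementary set $\Omega_\bullet\setminus\Omega$, which by construction is contained in $E$, both $u$ and $\mathfrak{F}_\bullet u = \mathfrak{F} v$ vanish almost everywhere, being members of $W^{1,p}_E(\Omega_\bullet)$ and $W^{1,p}_E(\R^d)$ respectively -- spaces whose elements are limits of smooth functions supported away from $E$. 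In the case most relevant to the paper (e.g.\ when $E\subset\partial\Omega$ has Lebesgue measure zero, as for a $(d-1)$-set) this is vacuously true since $\Omega_\bullet\setminus\Omega$ itself has measure zero, so the identity on $\Omega$ alone already settles the claim.

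I do not anticipate a genuine obstacle; the argument is entirely functorial and leans on the surjectivity of $\mathrm{Ext}(\Omega,\Omega_\bullet)$ from Lemma~\ref{l-modigeb}. The only mildly delicate bookkeeping is the treatment of $\Omega_\bullet\setminus\Omega\subset E$, which is handled by the a.e.\ vanishing on $E$ intrinsic to the definition of the spaces $W^{1,p}_E$.
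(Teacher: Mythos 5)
Your proof is correct and takes essentially the same route as the paper: both define $\mathfrak F_\bullet$ by precomposing $\mathfrak F$ with restriction from $\Omega_\bullet$ to $\Omega$ (which, as you observe, is exactly $\mathrm{Ext}(\Omega,\Omega_\bullet)^{-1}$), and both deduce continuity and the factorization directly. The only difference is that you spell out why $\mathfrak F_\bullet$ is genuinely an extension operator on $\Omega_\bullet\setminus\Omega\subset E$, a small verification the paper leaves implicit.
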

\begin{proof}
Let $\mathfrak S$ be the restriction operator from $W^{1,p}_E(\Omega_\bullet)$
to $W^{1,p}_E(\Omega)$. Then we define, for every $f \in W^{1,p}_E(\Omega_\bullet)$,
$\mathfrak F_\bullet f:=\mathfrak F \mathfrak Sf$. We obtain
$\mathfrak F_\bullet \mathrm{Ext}(\Omega, \Omega_\bullet)=\mathfrak F \mathfrak S
\mathrm{Ext}(\Omega, \Omega_\bullet) =\mathfrak F$.
This shows that the factorization holds algebraically. But one also has
 \begin{align*}
   \|{\mathfrak F}_\bullet \mathrm{Ext}(\Omega, \Omega_\bullet) f
	\|_{W^{1,p}_E(\R^d)} &= \| \mathfrak F f \|_{W^{1,p}_E(\R^d)} \le
	\| \mathfrak F \|_{\mathcal L(W^{1,p}_E(\Omega);W^{1,p}_E(\R^d))}
	\|f\|_{W^{1,p}_E(\Omega)} \\
   &= \|\mathfrak F \|_{\mathcal L(W^{1,p}_E(\Omega);W^{1,p}_E(\R^d))}
	\| \mathrm{Ext}(\Omega, \Omega_\bullet) f
	\|_{W^{1,p}_E(\Omega_\bullet)}.
  \qedhere
 \end{align*}
\end{proof}
Having extended the functions already to $\Omega_\bullet$, one may proceed as
follows: $E$ was compact, thus $E_\bullet := E \cap \partial \Omega_\bullet$ is
closed in $\partial \Omega_\bullet$.
So one can now consider the space $W^{1,p}_{E_\bullet}(\Omega_\bullet)$ 
and has then the task to establish an extension operator for this space -- while
afterwards taking into account that the original functions were $0$ also on
the set $E \cap \Omega_\bullet$.
\begin{definition} \label{d-sobext}
 Let $\Lambda \subset \R^d$ be a bounded domain and suppose $p \in [1,\infty[$.
 Then we say that $\Lambda$ is a $W^{1,p}$-extension domain, if there exists a
 linear, continuous extension operator $\mathfrak F_p : W^{1,p}(\Lambda) \to
 W^{1,p}(\R^d)$. We call $\Lambda$ a \emph{Sobolev extension domain}, if it is
 a $W^{1,1}$-extension domain and the restrictions of $\mathfrak F_1$ give
 continuous extension operators $\mathfrak F_p : W^{1,p}(\Lambda) \to
 W^{1,p}(\R^d)$ for any $p \in {]1,\infty[}$.
\end{definition}

We now come to the second aim of this subsection, that is to show that a local
extension property of $\Lambda$ around points of
$\overline{\partial \Lambda \setminus F}$ already gives an extension operator
for the space $W^{1,p}_F(\Lambda)$. Here one should think of $\Lambda$ as being 
either $\Omega$ or $\Omega_\bullet$.

\begin{theorem} \label{l-extension}
 Fix $ p \in [1,\infty[$. Let $\Lambda$ be a bounded domain and let $F$ be a
 closed part of its boundary. Assume that, for every $\mathrm x \in
 \overline{\partial \Lambda \setminus F}$, there is an open neighbourhood
 $U_\mathrm x$ of $ \mathrm x$ such that $\Lambda \cap U_\mathrm x$ is a
 $W^{1,p}$-extension domain. Then there is a continuous extension operator
 $\mathfrak F_p : W^{1,p}_F(\Lambda) \to W^{1,p}(\R^d)$.
\end{theorem}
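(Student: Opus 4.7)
The plan is to patch up $u$ via a partition of unity, using the local extension operators on neighbourhoods of $\overline{\partial\Lambda \setminus F}$ and extension by zero elsewhere.

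First I would exploit compactness: since $\Lambda$ is bounded, $\partial\Lambda$ is compact, and so is its closed subset $\overline{\partial\Lambda \setminus F}$. From the hypothesised neighbourhoods I extract a finite subcover $U_1,\ldots,U_N$ of $\overline{\partial\Lambda\setminus F}$, together with the associated extension operators $\Sigma_i\colon W^{1,p}(\Lambda\cap U_i)\to W^{1,p}(\R^d)$. The set $F \setminus \bigcup_i U_i$ is then a compact subset of $F$ that is disjoint from $\overline{\partial\Lambda\setminus F}$, so I can enclose it in an open set $U_0$ with $\overline{U_0}\cap\overline{\partial\Lambda\setminus F}=\emptyset$; in particular $\overline{U_0}\cap\partial\Lambda\subseteq F$. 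The remainder $\overline\Lambda\setminus(U_0\cup\bigcup_i U_i)$ is compact and lies in the open set $\Lambda$, so I enclose it in an open $U_{00}$ with $\overline{U_{00}}\subset\Lambda$. The family $\{U_{00},U_0,U_1,\ldots,U_N\}$ then covers $\overline\Lambda$, and I fix a subordinate smooth partition of unity $\{\eta_{00},\eta_0,\eta_1,\ldots,\eta_N\}$ with $\sum_j \eta_j\equiv 1$ on a neighbourhood of $\overline\Lambda$.

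Next, for each summand $\eta_j u$ I construct a $W^{1,p}(\R^d)$-extension. The term $\eta_{00}u$ has compact support in $\Lambda$, so its extension by zero to $\R^d$ is immediately in $W^{1,p}(\R^d)$. For $i\ge 1$ I extend $\eta_i u\in W^{1,p}(\Lambda\cap U_i)$ via $\Sigma_i$ and then multiply by a cutoff $\tilde\eta_i\in C^\infty_c(U_i)$ equal to $1$ on $\supp\eta_i$, so that the result still agrees with $\eta_i u$ on $\Lambda$. Adding these pieces to the extension of $\eta_0 u$ (see below) yields $\mathfrak F_p u\in W^{1,p}(\R^d)$ whose restriction to $\Lambda$ equals $\sum_j\eta_j u=u$; continuity follows since each $\Sigma_i$ and each cutoff multiplication is bounded and the cover is finite.

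The main obstacle is the zero-extension of $\eta_0 u$ across $U_0\cap\partial\Lambda\subseteq F$. To justify it I would approximate $u$ in $W^{1,p}(\Lambda)$ by a sequence $\psi_n\in C^\infty_F(\Lambda)$, each being the restriction to $\Lambda$ of some $\widetilde\psi_n\in C^\infty(\R^d)$ with $\supp\widetilde\psi_n\cap F=\emptyset$. Because $\supp\eta_0\cap\partial\Lambda\subseteq F$ while $\supp\widetilde\psi_n$ avoids $F$, the product $\eta_0\widetilde\psi_n$ has support disjoint from $\partial\Lambda$; consequently the function $v_n$ that equals $\eta_0\widetilde\psi_n$ on $\overline\Lambda$ and $0$ elsewhere belongs to $C^\infty_c(\Lambda)$. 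Since $\|v_n-v_m\|_{W^{1,p}(\R^d)}=\|\eta_0(\psi_n-\psi_m)\|_{W^{1,p}(\Lambda)}\to 0$, the sequence $(v_n)$ is Cauchy in $W^{1,p}(\R^d)$, and its limit provides the desired $W^{1,p}(\R^d)$-extension of $\eta_0 u$, with norm controlled by $\|u\|_{W^{1,p}(\Lambda)}$.
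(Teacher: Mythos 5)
Your proof is correct and takes essentially the same route as the paper: a finite subcover of $\overline{\partial\Lambda\setminus F}$ by local extension neighbourhoods, a subordinate partition of unity, local extension plus cutoff on each $U_i$, and zero extension for the piece whose support meets $\partial\Lambda$ only in $F$. The only cosmetic difference is that the paper uses a single "interior" set $U$ and carries out the whole construction on the dense class $C^\infty_F(\Lambda)$, invoking density once at the very end, whereas you split $U$ into $U_{00}$ and $U_0$ and run the density argument just for the $\eta_0$ piece; both organisations are equivalent.
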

\begin{proof}
 For every $\mathrm x \in \overline{\partial \Lambda \setminus F}$, let
 $U_\mathrm x$ be the open neighbourhood of $\mathrm x$ from the assumption.
 Let $U_{\mathrm x_1}, \ldots, U_{\mathrm x_n}$ be a finite subcovering of 
 $\overline{\partial \Lambda \setminus F}$. Since the compact set
 $\overline{\partial \Lambda \setminus F}$ is contained in the open set
 $\bigcup_j U_{\mathrm x_j}$, there is an $\epsilon > 0$, such that the sets
 $U_{\mathrm x_1},\ldots,U_{\mathrm x_n}$, together with the open set $U :=
 \{\mathrm y \in \Omega : \mathrm{dist}(\mathrm y, \overline{\partial \Lambda
 \setminus F}) > \epsilon \}$, form an open covering of $\overline \Lambda$.
 Hence, on $\overline \Lambda$ there is a $C^\infty$-partition of unity $\eta,
 \eta_1, \ldots, \eta_n$, with the properties $\mathrm{supp}(\eta) \subset U$,
 $\mathrm{supp}(\eta_j) \subset U_{\mathrm x_j}$.

 Assume $\psi \in C^\infty_F(\Lambda)$. Then $\eta \psi \in C^\infty_0(\Lambda)
 \subset W^{1,p}_0(\Lambda)$. If one extends this function by $0$ outside of
 $\Lambda$, then one obtains a function $\varphi \in
 C^\infty_{\partial \Lambda}(\R^d) \subset C^\infty_F(\R^d) \subset
 W^{1,p}_F(\R^d)$ with the property $\|\varphi\|_{W^{1,p}(\R^d)} =
 \|\eta \psi\|_{W^{1,p}(\Lambda)}$.

 Now, for every fixed $j \in \{1, \ldots, n\}$, consider the function $\psi_j
 := \eta_j \psi \in W^{1,p}(\Lambda \cap U_{\mathrm x_j})$. Since $\Lambda \cap
 U_{\mathrm x_j}$ is a $W^{1,p}$-extension domain by supposition, there is an
 extension of $\psi_j$ to a $W^{1,p}(\R^d)$-function $\varphi_j$ together with
 an estimate $\|\varphi_j\|_{W^{1,p}(\R^d)} \le c
 \|\psi_j\|_{W^{1,p}( \Lambda \cap U_{\mathrm x_j})}$, where $c$ is independent
 from $\psi$. Clearly, one has a priori no control on the behaviour of
 $\varphi_j$ on the set $\Lambda \setminus U_{\mathrm x_j}$. In particular
 $\varphi_j$ may there be nonzero and, hence, cannot be expected to coincide
 with $\eta_j \psi$ on the whole of $\Lambda$. In order to correct
 this, let $\zeta_j$ be a $C^\infty_0(\R^d)$-function which is identically $1$
 on $\mathrm{supp}(\eta_j)$ and has its support in $U_{\mathrm x_j}$. Then
 $\eta_j \psi$ equals $\zeta_j \varphi_j $ on all of $\Lambda$. Consequently,
 $\zeta_j \varphi_j$ really is an extension of $\eta_j \psi$ to the whole of
 $\R^d$ which, additionally, satisfies the estimate
 \[ \| \zeta_j \varphi_j \|_{W^{1,p}(\R^d)} \le c \| \varphi_j
	\|_{W^{1,p}(\R^d)}\le c \| \eta_j \psi
	\|_{W^{1,p}(\Lambda \cap U_{\mathrm x_j})} \le c
	\|\psi\|_{W^{1,p}(\Lambda)},
 \]
 where $c$ is independent from $\psi$. Thus, defining $\mathfrak F_p(\psi) =
 \varphi + \sum_j \zeta_j \varphi_j$ one gets a linear, continuous extension
 operator $\mathfrak F_p$ from $C^\infty_F(\Lambda)$ into $W^{1,p}(\R^d)$. By
 density, $\mathfrak F_p$ extends uniquely to a linear, continuous operator
 \begin{equation} \label{e-extendi}
  \mathfrak F_p:W^{1,p}_F(\Lambda) \to W^{1,p}(\R^d).
\end{equation} 
\end{proof}

\begin{rem} \label{r-unif}
 \begin{enumerate} 
\item
         Observe that the set $F=\partial \Omega_\bullet \cap E$ is \emph{not}
         necessarily again a $(d-1)$-set; possibly one
         even has $\mathcal H_{d-1}(F)=0$. (Take Figure 2 and suppose that this 
         time only $\Sigma$ forms the whole Dirichlet part of the boundary.)
         The reader should carefully notice, 
         that this does not affect the considerations in Theorem \ref{l-extension}.
  \item Of course, one gets uniformity with respect to $p$ from any subinterval
	of $[1,\infty[$ if one invests uniformity concerning the
	extension property for the local domains $\Lambda \cap U_\mathrm x$.
  \item \label{r-unif:ii} If one aims at an extension operator $\mathfrak E :
	W^{1,p}_D(\Omega) \to W^{1,p}_D(\R^d)$, one is free to modify the
	domain $\Omega$ to $\Omega_\bullet$ -- or not. In most cases the local
	geometry improves (concerning Sobolev extension), but we are unable to
	show that this is always the case -- irrespective of
	Lemma~\ref{l-exten0}. On the other hand, we have no examples where the
	situation becomes worse.
 \end{enumerate}
\end{rem}

Theorem~\ref{l-extension} yields a Sobolev extension operator from
$W^{1,p}_F(\Lambda)$ to $W^{1, p}(\R^d)$. However, our aim is to show that it
does not destroy the boundary behavior, which means that it even maps to
$W^{1,p}_F(\R^d)$. We will now turn to this question.

\begin{lemma} \label{l-trace}
 Suppose that $\Lambda \subset \R^d$ is a domain and $F \subseteq \partial \Lambda$
is a (closed) $(d-1)$-set. Moreover, assume that for
 $\mathcal H_{d-1}$-almost all points $\mathrm y \in F$, balls around
 $\mathrm y$ in $\Lambda$ have \emph{asymptotically nonvanishing relative
 volume}, i.e.
 \begin{equation} \label{e-asymp}
  \liminf_{r \mapsto 0} \frac {|B(\mathrm y;r) \cap \Lambda)|}{r^d} > 0.
 \end{equation}
 Let $\psi \in C^\infty_F(\Lambda)$ and $p \in {]1,\infty[}$. If there is an
 extension $\widehat \psi \in W^{1,p}(\R^d)$ of $\psi$, then $\widehat \psi \in
 W^{1,p}_F(\R^d)$.
\end{lemma}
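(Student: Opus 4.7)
The plan is to invoke Theorem~\ref{t-coincid} and work with the trace-zero description $\mathcal W^{1,p}_F(\R^d)$ instead. Since $F$ is a closed $(d-1)$-set in $\R^d$, Theorem~\ref{t-coincid} yields $W^{1,p}_F(\R^d) = \mathcal W^{1,p}_F(\R^d)$, so it suffices to show that $\mathcal R_F \widehat\psi = 0$ $\mathcal H_{d-1}$-a.e.\ on $F$. By definition of $C^\infty_F(\Lambda)$, there is $\tilde\psi \in C^\infty(\R^d)$ with $\supp(\tilde\psi) \cap F = \emptyset$ and $\tilde\psi|_\Lambda = \psi$; since $\supp(\tilde\psi)$ is closed and disjoint from $F$, for every $y \in F$ there exists $r_0 = r_0(y) > 0$ with $\tilde\psi \equiv 0$ on $B(y, r_0)$. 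In particular, $\widehat\psi = \psi = 0$ almost everywhere on $B(y, r_0) \cap \Lambda$.

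Next, by Remark~\ref{r-wichtig}~(ii), for $\widehat\psi \in W^{1,p}(\R^d)$ the value $\mathcal R_F \widehat\psi(y)$ coincides $\mathcal H_{d-1}$-a.e.\ with the precise-representative limit $\lim_{r \to 0} |B(y, r)|^{-1} \int_{B(y, r)} \widehat\psi \, dx$. Standard capacity theory for Sobolev functions (compare \cite[Ch.~3.1]{ziemer}) actually yields the stronger $L^1$-Lebesgue point property
\[
 \lim_{r \to 0} \frac{1}{|B(y, r)|} \int_{B(y, r)} \bigl| \widehat\psi(x) - \mathcal R_F\widehat\psi(y) \bigr| \, dx = 0
\]
outside a set of $p$-capacity zero, which for $p > 1$ is $\mathcal H_{d-1}$-null.

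Now pick any $y \in F$ at which both this Lebesgue property and the asymptotic volume condition \eqref{e-asymp} hold. Restricting the integral above to $B(y, r) \cap \Lambda$, on which $\widehat\psi$ vanishes for $r < r_0(y)$, the integrand becomes the constant $|\mathcal R_F\widehat\psi(y)|$, so
\[
 |\mathcal R_F \widehat\psi(y)| \cdot \frac{|B(y, r) \cap \Lambda|}{|B(y, r)|} \le \frac{1}{|B(y, r)|} \int_{B(y, r)} \bigl| \widehat\psi(x) - \mathcal R_F \widehat\psi(y) \bigr| \, dx,
\]
whose right-hand side tends to $0$ as $r \to 0$. Together with $\liminf_{r \to 0} |B(y, r) \cap \Lambda|/|B(y, r)| > 0$, this forces $\mathcal R_F \widehat\psi(y) = 0$, completing the reduction.

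The main obstacle is that Remark~\ref{r-wichtig}~(ii) as stated only asserts the existence of the precise-representative limit, whereas the argument above genuinely needs the $L^1$-Lebesgue point property at $\mathcal H_{d-1}$-a.e.\ point. This is a strictly stronger fact but is standard in the $p$-capacitary theory of Sobolev functions. If one prefers to avoid invoking it, an alternative is to apply a Poincar\'e inequality on $B(y, r)$ to $\widehat\psi - \mathcal R_F \widehat\psi(y)$ and control the right-hand side via the $\mathcal H_{d-1}$-a.e.\ decay of $r^{p-d} \int_{B(y, r)} |\nabla\widehat\psi|^p \, dx$, which follows from a fractional-maximal-function/Riesz-potential estimate for $|\nabla\widehat\psi|^p \in L^1(\R^d)$.
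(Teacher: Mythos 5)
Your proof is correct and follows the paper's overall strategy: reduce via Theorem~\ref{t-coincid} to showing $\mathcal R_F \widehat\psi = 0$ $\mathcal H_{d-1}$-a.e.\ on $F$, then exploit that $\widehat\psi$ vanishes near $F$ inside $\Lambda$ together with the asymptotic relative volume hypothesis \eqref{e-asymp}. You differ, however, in how the central step --- passing from vanishing of the inner average over $\Lambda \cap B(\mathrm y,r)$ to vanishing of the full average over $B(\mathrm y,r)$ --- is established. The paper defers this to the argument of Jonsson/Wallin \cite[Ch.~VIII Prop.~2]{jons}, noting only two adjustments, whereas you carry it out explicitly by invoking the $L^1$-Lebesgue point theorem for $W^{1,p}$-functions outside a set of $(1,p)$-capacity zero, together with the fact that for $p>1$ such sets are $\mathcal H_{d-1}$-null. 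This has the advantage of being self-contained and of making transparent exactly where \eqref{e-asymp} enters: it bounds $|\mathcal R_F\widehat\psi(\mathrm y)|$ from above by the Lebesgue-point defect divided by the relative volume of $\Lambda$ in $B(\mathrm y,r)$. Your caveat that Remark~\ref{r-wichtig}~(ii) only asserts existence of the averaged limit, while the argument genuinely needs the stronger $L^1$-Lebesgue property, is an accurate observation about a point the paper leaves implicit in the reference to Jonsson/Wallin; both the capacitary route and the Poincar\'e/maximal-function fallback you sketch are standard and close this gap cleanly.
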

\begin{proof}
 One first proves the property $\widehat \psi \in \mathcal W^{1,p}_F(\R^d)$.
 In fact, since $\mathrm{supp}(\psi)$ has a positive distance to $F$, one
 clearly has $\lim_{r \to 0} \frac{1}{|\Lambda \cap B(\mathrm y,r)|}
 \int_{\Lambda \cap B(\mathrm y,r)} \psi(\mathrm x) \; d \mathrm x = 0$ for all
 $\mathrm y \in F$. From this, one deduces
 \begin{equation} \label{e-outtrace}
  \lim_{r \to 0} \frac{1}{|B(\mathrm y,r)|} \int_{B(\mathrm y,r)} \widehat \psi
	(\mathrm x) \; d \mathrm x = 0 \quad \text{ for } \quad
	\mathcal H_{d-1}\text{-almost all }  \mathrm y \in F.
\end{equation}
 The proof of this runs along the same lines as the proof of
 \cite[Ch.~VIII Prop.~2]{jons}; with two differences:

 Firstly, one has to take the measure $\mu$ here as $\mathcal H_{d-1}|_F$
 instead of $\mathcal H_{d-1}|_{\partial \Lambda}$ there. In order to do so, one
 has to show that this measure has the required functional analytic quality --
 and this is the case.

 Secondly, one observes that the $\liminf$ in \eqref{e-asymp} does in fact
 not need not to have a uniform lower bound for ($\mathcal H_{d-1}$-almost) all
 $\mathrm y \in F$, the above condition suffices.

 But \eqref{e-outtrace} implies $\widehat \psi \in \mathcal W^{1,p}_F(\R^d)$,
 recall Remark~\ref{r-wichtig}. Having this at hand, one applies 
Theorem~\ref{t-coincid}.
\end{proof}

\begin{corollary} \label{c-extendtrace}
 Assume that there is a linear, continuous extension operator $\mathfrak E:W^{1,p}_D(\Omega) \to W^{1,p}(\R^d)$.
Then, under the assumptions of Lemma \ref{l-trace}, $\mathfrak E$ maps into the space $W_D^{1,p}(\R^d)$.
\end{corollary}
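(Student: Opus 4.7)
The plan is to reduce the statement to Lemma~\ref{l-trace} by a density/closedness argument, using that $W^{1,p}_D(\R^d)$ is by definition the closure of $C^\infty_D(\R^d)$ in $W^{1,p}(\R^d)$, hence a closed subspace.

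First, I take an arbitrary $\psi \in W^{1,p}_D(\Omega)$. By Definition~\ref{d-1}, there is a sequence $\{\psi_n\}_n \subset C^\infty_D(\Omega)$ with $\psi_n \to \psi$ in $W^{1,p}(\Omega)$. Applying the given extension operator, I obtain $\mathfrak E\psi_n \in W^{1,p}(\R^d)$, and by continuity of $\mathfrak E$,
\[
 \mathfrak E\psi_n \longrightarrow \mathfrak E\psi \quad \text{in } W^{1,p}(\R^d).
\]

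Next, for each fixed $n$, the function $\mathfrak E\psi_n$ is an element of $W^{1,p}(\R^d)$ extending $\psi_n \in C^\infty_D(\Omega)$. Since the assumptions of Lemma~\ref{l-trace} are in force, that lemma applies directly to $\psi_n$ and its extension $\mathfrak E\psi_n$, yielding $\mathfrak E\psi_n \in W^{1,p}_D(\R^d)$.

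Finally, $W^{1,p}_D(\R^d)$ is a closed subspace of $W^{1,p}(\R^d)$ (it is defined as a closure), so the limit $\mathfrak E\psi$ also belongs to $W^{1,p}_D(\R^d)$. This gives the desired mapping property. I do not anticipate any real obstacle here: the only non-trivial ingredient is Lemma~\ref{l-trace} itself, which has already been established; the rest is a standard approximation in Banach spaces.
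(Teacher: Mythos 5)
Your proof is correct and is essentially the intended argument: approximate $\psi\in W^{1,p}_D(\Omega)$ by $\psi_n\in C^\infty_D(\Omega)$, apply Lemma~\ref{l-trace} to each $\psi_n$ with extension $\mathfrak E\psi_n$, and pass to the limit using closedness of $W^{1,p}_D(\R^d)$. The paper leaves the proof of the corollary implicit, and this density argument is exactly what the label ``Corollary'' is signalling.
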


In the case where an extension operator from $W^{1,p}_F(\Omega) $ into $W^{1,p}(\R^d) $
operates \emph{uniformly in $p$}, things become much simpler -- as the following
result shows:
\begin{lemma} \label{l-moritz}
Let $\Lambda$ be a bounded domain, and  let $F \subset \partial \Lambda$ be 
a (closed) $(d-1)$-set. If there is continuous extension operator $\mathfrak E:
W^{1,1}_F(\Lambda) \to W^{1,1}_F(\R^d) $ which acts as a continuous operator
 $\mathfrak E:W^{1,p}_F(\Lambda) \to W^{1,p}(\R^d)$ for all $p \in ]1,d+\epsilon]$
($d$ being the space dimension and $\epsilon >0$). Then, for every $p \in ]1,d+\epsilon]$,
$\mathfrak E$ maps the space $W^{1,p}_F(\Lambda)$ even into $W_F^{1,p}(\R^d)$.
\end{lemma}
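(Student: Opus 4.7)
The plan is to reduce the conclusion to a pointwise-average trace condition on $F$ via Theorem~\ref{t-coincid}, and then to extract that condition by combining the two hypotheses on $\mathfrak E$. Given $\psi \in W^{1,p}_F(\Lambda)$ for some $p \in {]1,d+\epsilon]}$, the boundedness of $\Lambda$ first yields $\psi \in W^{1,1}_F(\Lambda)$ (any $C^\infty_F(\Lambda)$-sequence converging in the $W^{1,p}$-norm is automatically a $W^{1,1}$-approximation). Applying both hypotheses of the lemma then gives
\[
f := \mathfrak E \psi \in W^{1,1}_F(\R^d) \cap W^{1,p}(\R^d).
\]
By Theorem~\ref{t-coincid} applied on $\R^d$, proving $f \in W^{1,p}_F(\R^d)$ is equivalent to showing that the Jonsson--Wallin pointwise-average trace $\mathcal R_F f$ vanishes $\mathcal H_{d-1}$-almost everywhere on $F$.

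To detect this vanishing, I select approximants $\varphi_n \in C^\infty_F(\R^d)$ with $\varphi_n \to f$ in $W^{1,1}(\R^d)$, and denote by $\delta_n > 0$ the distance from $\operatorname{supp}(\varphi_n)$ to $F$. For any $\mathrm y \in F$ and $r < \delta_n$, $\varphi_n$ vanishes identically on $B(\mathrm y, r)$, hence
\[
\left|\frac{1}{|B(\mathrm y, r)|}\int_{B(\mathrm y, r)} f\, \mathrm d \mathrm x\right| = \left|\frac{1}{|B(\mathrm y, r)|}\int_{B(\mathrm y, r)}(f-\varphi_n)\, \mathrm d \mathrm x\right| \leq \frac{\|f-\varphi_n\|_{L^1(\R^d)}}{|B(\mathrm y, r)|}.
\]
A diagonal choice $r = r_n \to 0$ with $r_n < \delta_n$ and $r_n^{-d}\|f-\varphi_n\|_{L^1(\R^d)} \to 0$ makes the right-hand side vanish in the limit, uniformly in $\mathrm y \in F$. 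Because $f \in W^{1,p}(\R^d)$ with $p > 1$, the pointwise-average limit $\mathcal R_F f(\mathrm y) = \lim_{r\to 0}|B(\mathrm y, r)|^{-1}\int_{B(\mathrm y, r)} f$ exists for $\mathcal H_{d-1}$-a.e.\ $\mathrm y$ (see Remark~\ref{r-wichtig}(ii)); along the subsequence $r_n$ this limit must coincide with the estimate above, hence equals zero. Thus $\mathcal R_F f = 0$ $\mathcal H_{d-1}$-a.e.\ on $F$, and Theorem~\ref{t-coincid} yields $f \in W^{1,p}_F(\R^d)$.

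The main technical obstacle is executing the diagonal construction with the required rate, i.e.\ producing a subsequence of $(\varphi_n)$ with $\|f - \varphi_n\|_{L^1(\R^d)} = o(\delta_n^d)$. A natural route is to realize the $\varphi_n$ as Whitney-type cutoff-and-mollify approximants (in the spirit of the extension operator of Theorem~\ref{t-Jons}), for which $\delta_n$ is essentially the cutoff parameter and $\|f-\varphi_n\|_{L^1(\R^d)}$ is controlled by $\|f\|_{L^1}$ on the $\delta_n$-tubular neighborhood of $F$. The joint $W^{1,p}/W^{1,1}_F$-regularity of $f$ (with $p>1$) combined with Sobolev embedding on such tubular neighborhoods of the $(d-1)$-set $F$ then supplies the needed decay; an equivalent route is to first reduce by density to $\psi \in C^\infty_F(\Lambda)$, in which case $f \in W^{1,d+\epsilon}(\R^d)$ is Hölder continuous, and the pointwise vanishing of $f$ on $F$ can be derived directly from its continuity together with $f \in W^{1,1}_F(\R^d)$.
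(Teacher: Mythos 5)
Your ``equivalent route'' at the end is, in fact, the paper's proof, and you should lead with it rather than tuck it away as a fallback. By density of $C^\infty_F(\Lambda)$ in $W^{1,p}_F(\Lambda)$, continuity of $\mathfrak E$, and closedness of $W^{1,p}_F(\R^d)$ in $W^{1,p}(\R^d)$, it suffices to treat $\psi \in C^\infty_F(\Lambda)$; then $\mathfrak E\psi \in W^{1,d+\epsilon}(\R^d)$ has a H\"older continuous representative, this representative vanishes on $F$ using $\mathfrak E\psi \in W^{1,1}_F(\R^d)$, and one closes via Remark~\ref{r-wichtig} and Theorem~\ref{t-coincid}. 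The one statement you (and the paper) leave as ``direct'' still deserves a sentence: approximate $\mathfrak E\psi$ in $W^{1,1}(\R^d)$ by $C^\infty_F(\R^d)$-functions, invoke continuity of the $W^{1,1}(\R^d)$-trace onto the $(d-1)$-set $F$ into $L^1(F,\rho)$ to get vanishing $\rho$-a.e., and then use continuity of the representative together with the Ahlfors $(d-1)$-regularity of $\rho$ to upgrade to vanishing everywhere on $F$.

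The primary ``diagonal'' route, by contrast, is not merely gappy but cannot be made to work. You bound $\bigl|\fint_{B(\mathrm y,r)} f\bigr| \le |B(\mathrm y,r)|^{-1}\,\|f-\varphi_n\|_{L^1(\R^d)} \sim r^{-d}\|f-\varphi_n\|_{L^1}$ for $r < \delta_n$ and so need $r_n < \delta_n$ with $r_n^{-d}\|f-\varphi_n\|_{L^1} \to 0$, hence $\|f-\varphi_n\|_{L^1} = o(\delta_n^d)$. But even in the most favorable case, when $f$ is Lipschitz and \emph{already known} to vanish on $F$, a Whitney cutoff at scale $\delta_n$ yields only $\|f-\varphi_n\|_{L^1(\R^d)} \lesssim \delta_n^2$, since $|f| \lesssim \operatorname{dist}(\cdot,F)$ on the $\delta_n$-tube around $F$, whose Lebesgue measure is comparable to $\delta_n$ for an Ahlfors $(d-1)$-regular set. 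This beats $\delta_n^d$ only when $d < 2$, so the diagonal sequence $r_n$ cannot be produced in any dimension $d \ge 2$; the sketched ``Sobolev embedding on tubular neighborhoods'' cannot supply the missing $\delta_n^{d-2}$ factor. The $r^{-d}$ normalization in your averaging estimate is simply too strong to be paid for by an $L^1$-approximation rate, and the density reduction to $C^\infty_F(\Lambda)$ is what makes the argument tractable, because it trades this global $L^1$ estimate for pointwise continuity.
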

\begin{proof}
Fix $p \in ]1,d+\epsilon]$ and assume first $\psi \in C^\infty_F(\Lambda)$. Then 
the extension $\mathfrak E\psi$ does not belong only to $W^{1,p}(\R^d)$ but even
to $W^{1,d+\epsilon}(\R^d)$. Hence, $\mathfrak E\psi$ has a representative which
is H\"older continuous. Moreover, it is clear that this representative is identical
$0$ on $F$. This leads to the property $\mathfrak E \psi \in \mathcal W^{1,p}_F(\R^d)$,
 according to the $(d-1)$-property of $F$, cf. Remark~\ref{r-wichtig}. But, thanks
 to Theorem \ref{t-coincid}, the spaces $\mathcal W^{1,p}_F(\R^d)$
and $W^{1,p}_F(\R^d)$ coincide -- what implies $\mathfrak E \psi \in W^{1,p}_F(\R^d)$.
This proves the assertion for all elements from the dense subspace 
$C^\infty_F(\Lambda)$; what implies the claim by the continuity of $\mathfrak E$.
\end{proof}
\subsection{Geometric conditions}
In this subsection we will present geometric conditions on the boundary part
$\overline {\partial \Lambda \setminus F}$, such that the local sets $\Lambda 
\cap U_{\mathrm x_j}$ really admit the Sobolev extension property required in
Theorem~\ref{l-extension}. A first condition, completely sufficient for the 
treatment of most real world problems, is the following:
\begin{assu} \label{a-extend}
Let $\Omega$ and $\Gamma$ be as in Assumption~\ref{a-1} and let $\Lambda$ be
either $\Omega$ or $\Omega_\bullet$,
cf.\@ Remark~\ref{r-unif}~\ref{r-unif:ii}. For every $\mathrm x \in
\overline \Gamma$ there is an open neighbourhood $U_\mathrm x$ of $\mathrm x$
 and a bi-Lipschitz mapping $\phi_\mathrm x$ from $U_\mathrm x$ onto a cube,
 such that $\phi_\mathrm x(\Lambda \cap U_\mathrm x)$ is the (lower) half cube
 and $\partial \Lambda \cap U_\mathrm x$ is mapped onto the top surface of the
 half cube.
\end{assu}
A proof for the fact that this condition really leads to the required extension
operator is given in \cite{TomJo} for the case $p=2$. It carries over, however,
to $p \in [1,\infty[$ -- word by word.

Another relevant condition that assures the existence of a Sobolev extension
operator is that of Jones \cite{jones}. In order to formulate this we need the
following definition.
\begin{definition} \label{d-Jones}
 Let $\Upsilon \subset \R^d$ be a domain and $\varepsilon, \delta >0$. Assume
 that any two points $\mathrm x, \mathrm y \in \Upsilon$, with distance not
 larger than $\delta$, can be connected within $\Upsilon$ by a rectifiable arc
 $\gamma$ with lenght $l(\gamma)$, such that the following two conditions are
 satisfied for all points $\mathrm z$ from the curve $\gamma$:
 \[ l(\gamma) \le \frac{1}{\varepsilon } \|\mathrm x - \mathrm y\|, \quad
	\text{and} \quad
	\frac{\|\mathrm x - \mathrm z\| \|\mathrm y -\mathrm z\|}
		{\|\mathrm x -\mathrm y\|}
	\le \frac{1}{\varepsilon} \mathrm{dist}(\mathrm z, \Upsilon^c).
 \]
 Then $\Upsilon$ is called $(\varepsilon, \delta)$-domain in the spirit of
 Jones.
\end{definition}

\begin{proposition} \label{p-Jones}
 If $\Upsilon$ is an $(\varepsilon,\delta)$-domain, then it is a Sobolev
 extension domain.
\end{proposition}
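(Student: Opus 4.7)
The plan is to reproduce, with the natural adaptations, the Whitney-type construction of Jones from \cite{jones}. First, one decomposes $\Upsilon$ into a family of interior Whitney cubes $\{Q_j\}$ and $\R^d\setminus \overline{\Upsilon}$ into exterior Whitney cubes $\{S_k\}$. The exterior cubes of sidelength exceeding a fixed fraction of $\delta$ are discarded (the extension will vanish there after multiplication by a global smooth cutoff), and to each of the remaining exterior cubes $S_k$ one assigns a ``reflected'' interior cube $S_k^\star \in \{Q_j\}$ whose sidelength and whose distance to $S_k$ are both comparable to the sidelength of $S_k$. The existence of such a reflection is exactly what the $(\varepsilon,\delta)$-property guarantees: around every boundary point $\Upsilon$ contains a twisted cone of controlled aperture, out of which $S_k^\star$ can be picked.

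Next, for $f \in W^{1,p}(\Upsilon)$ and each interior Whitney cube $Q_j$, let $P_j$ be the mean of $f$ on $Q_j$. Poincar\'e's inequality on cubes yields $\|f - P_j\|_{L^p(Q_j)} \le c\,\mathrm{diam}(Q_j)\,\|\nabla f\|_{L^p(Q_j)}$ with a constant depending only on the dimension. Picking a standard partition of unity $\{\phi_k\}$ subordinate to a slight enlargement of $\{S_k\}$, define $\mathfrak F f := f$ on $\Upsilon$ and $\mathfrak F f := \sum_k \phi_k P_{j(k)}$ on $\R^d \setminus \overline{\Upsilon}$, where $j(k)$ indexes the reflection $S_k^\star$. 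Since $\sum_k \nabla \phi_k \equiv 0$, the gradient of $\mathfrak F f$ outside $\Upsilon$ is controlled by $L^p$-norms of the differences $P_{j(k)} - P_{j(k')}$ over touching exterior cubes, and each such difference is bounded by telescoping Poincar\'e estimates along a chain of interior Whitney cubes joining $S_k^\star$ to $S_{k'}^\star$. A separate, routine check establishes that $\mathfrak F f$ has the correct trace across $\partial \Upsilon$ so that the global weak gradient is a bona fide $L^p$ function.

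The key structural point for this paper is that the formula defining $\mathfrak F$ is independent of $p$: the very same linear operator serves every $p \in [1,\infty[$. Its norm depends on $p$ only through the Poincar\'e constant on cubes, which is purely dimensional. Thus one obtains a single operator $\mathfrak F_1 : W^{1,1}(\Upsilon) \to W^{1,1}(\R^d)$ whose restriction to $W^{1,p}(\Upsilon)$ maps continuously into $W^{1,p}(\R^d)$ for every $p \in ]1,\infty[$, which is exactly what Definition~\ref{d-sobext} asks for.

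The hard part will be the chain lemma underlying the gradient estimate: whenever two exterior Whitney cubes $S_k, S_{k'}$ touch, their reflections $S_k^\star, S_{k'}^\star$ must be connectable by a chain of interior Whitney cubes of uniformly bounded combinatorial length and pairwise comparable sidelengths. This is where the twisted-cone condition is essential: the chain is routed along the Jones arc $\gamma$ joining representative points in $S_k^\star$ and $S_{k'}^\star$, and the condition $\|\mathrm x - \mathrm z\|\,\|\mathrm y - \mathrm z\|/\|\mathrm x - \mathrm y\| \le \varepsilon^{-1} \mathrm{dist}(\mathrm z, \Upsilon^c)$ forces the interior Whitney cubes intersecting $\gamma$ to have sidelengths comparable to those of $S_k^\star$ and $S_{k'}^\star$. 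Once this purely geometric lemma is in hand, the analytic side collapses to standard telescoping of Poincar\'e inequalities on adjacent cubes.
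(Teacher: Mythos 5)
The paper gives no independent proof of Proposition~\ref{p-Jones}; it defers entirely to Jones \cite{jones} (see Remark~\ref{r-uniform}), and your proposal is an accurate sketch of precisely Jones's Whitney/reflected-cube construction for $W^{1,p}$ (degree-zero polynomials, chain lemma, telescoping Poincar\'e estimates). The one point you add explicitly---that Jones's operator is given by a fixed, $p$-independent geometric formula, so a single operator serves all $p\in[1,\infty[$---is exactly the uniformity that Definition~\ref{d-sobext} requires and that the paper implicitly reads off from Jones's construction.
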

\begin{rem} \label{r-uniform}
 This famous result is due to Jones \cite{jones}. \emph{Bounded}
 $(\varepsilon, \delta)$-domains are known  to be \emph{uniform domains}, see
 \cite[Ch.~4.2]{Vai}, compare also \cite{jones}, \cite{martio},
 \cite{martiosarv}, \cite{martin} for further information.

 Although the uniformness property is not necessary for a domain to be a
 Sobolev extension domain (see \cite{Yang}) it seems presently to be the
 broadest class of domains for which this extension property holds -- at least
 if one aims at all $p \in {]1\infty[}$. E.g. it contains Koch's
 snowflake, cf. \cite{jones}
\end{rem}
In view of these considerations, we can formulate the following criteria for
the existence of the required extension operator. cf.\@ Theorems~\ref{l-extension}
and Lemma \ref{l-moritz}.

\begin{theorem} \label{t-domainjones}
 Let $\Omega$, $\Gamma$ and $D$ be as in Assumption \ref{a-1} and let $\Lambda$
 be either $\Omega$ or $\Omega_\bullet$,
 cf.\@ Remark~\ref{r-unif}~\ref{r-unif:ii}. Suppose that Assumption~\ref{a-extend} is
 fullfilled or suppose that for every $\mathrm x \in \overline \Gamma$ there is an open,
 bounded neighbourhood $U_\mathrm x$ of $\mathrm x$, such that $U_\mathrm x
 \cap \Lambda$ is an $(\varepsilon,\delta)$-domain. Then
there exists a continuous, linear extension operator $\mathfrak E : 
W^{1,p}_D(\Omega) \to W_D^{1,p}(\R^d)$.
\end{theorem}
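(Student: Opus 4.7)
The plan is to assemble three tools already developed: Theorem~\ref{l-extension} to promote a local extension property into a global one, Lemma~\ref{l-moritz} to upgrade the image from $W^{1,p}(\R^d)$ to $W^{1,p}_D(\R^d)$, and, in the case $\Lambda=\Omega_\bullet$, Lemma~\ref{l-modigeb} to reduce extension from $\Omega$ to extension from $\Omega_\bullet$. The whole proof is essentially a chain of compositions, so long as the local Sobolev extension property required by Theorem~\ref{l-extension} can be established uniformly in $p$.

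First, if $\Lambda=\Omega_\bullet$, I would apply Lemma~\ref{l-modigeb} with $E:=D$, which is compact (being closed in $\partial\Omega$). This furnishes the isometric extension-by-zero $\mathrm{Ext}(\Omega,\Omega_\bullet) : W^{1,p}_D(\Omega) \to W^{1,p}_D(\Omega_\bullet)$, so it suffices to build an extension operator out of $\Omega_\bullet$; the step is omitted if $\Lambda=\Omega$. Next I would verify the local hypothesis of Theorem~\ref{l-extension} at every $\mathrm x\in\overline{\partial\Lambda\setminus D}=\overline\Gamma$, uniformly in $p\in[1,\infty[$. Under Assumption~\ref{a-extend} the bi-Lipschitz chart $\phi_{\mathrm x}$ identifies $\Lambda\cap U_{\mathrm x}$ with a half cube, and the standard reflection across the flat face supplies such an extension with constants depending only on the Lipschitz norms of $\phi_{\mathrm x}$; this is the content of \cite{TomJo}, written there for $p=2$ but transferring verbatim to all $p\in[1,\infty[$. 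Under the alternative assumption, Proposition~\ref{p-Jones} delivers Sobolev extension for the $(\varepsilon,\delta)$-domain $U_{\mathrm x}\cap\Lambda$, and by Definition~\ref{d-sobext} the underlying $\mathfrak F_1$ is simultaneously continuous on every $W^{1,p}$, $p\in{]1,\infty[}$.

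With the local extension property in hand, Theorem~\ref{l-extension}, applied with $F:=D$, yields a continuous extension $\mathfrak F_p : W^{1,p}_D(\Lambda) \to W^{1,p}(\R^d)$. A look at the construction in that proof (fixed partition of unity, fixed cut-offs $\zeta_j$, glued to the local extensions) shows that the \emph{same} underlying operator is continuous for every $p\in{]1,\infty[}$. This uniformity is exactly the hypothesis of Lemma~\ref{l-moritz}, so that lemma upgrades the image to $W^{1,p}_D(\R^d)$. Composing, when $\Lambda=\Omega_\bullet$, with $\mathrm{Ext}(\Omega,\Omega_\bullet)$ produces the desired operator $\mathfrak E : W^{1,p}_D(\Omega) \to W^{1,p}_D(\R^d)$. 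The only technical point that really requires care is the $p$-uniformity of the local extensions, since it is precisely that uniformity which allows Lemma~\ref{l-moritz} to deliver the correct trace zero behaviour on $D$ and thereby bypass an ad hoc verification of the asymptotic relative volume condition of Lemma~\ref{l-trace}.
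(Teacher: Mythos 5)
Your proposal reproduces the paper's own argument: Theorem~\ref{l-extension} to assemble a global extension operator $W^{1,p}_D(\Omega) \to W^{1,p}(\R^d)$ from local Sobolev extension properties (uniformly in $p$), followed by Lemma~\ref{l-moritz} to upgrade the image to $W^{1,p}_D(\R^d)$. The paper's proof is exactly this two-sentence chain, so you are on the same route, just with more detail spelled out for the two geometric alternatives and the $\Omega_\bullet$ pre-processing.

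One ordering detail deserves tightening. In the $\Lambda = \Omega_\bullet$ case you propose to apply Lemma~\ref{l-moritz} on $\Omega_\bullet$ and then compose with $\mathrm{Ext}(\Omega,\Omega_\bullet)$. But the Dirichlet part relative to $\Omega_\bullet$ is $D \cap \partial\Omega_\bullet$, which by Remark~\ref{r-unif}(i) need not be a $(d-1)$-set, so the hypothesis of Lemma~\ref{l-moritz} may not be available there. The clean way is to compose first --- $\mathrm{Ext}(\Omega,\Omega_\bullet)$ followed by the operator from Theorem~\ref{l-extension} --- yielding a $p$-uniform extension $W^{1,p}_D(\Omega) \to W^{1,p}(\R^d)$, and then apply Lemma~\ref{l-moritz} with $\Lambda = \Omega$ and $F = D$, where $D$ is a $(d-1)$-subset of $\partial\Omega$ by Assumption~\ref{a-1}. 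With that reordering your argument is correct and matches the paper's.
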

\begin{proof}
Both geometric configurations admit a continuous extension operator 
$W^{1,p}_D(\Omega) \to W^{1,p}(\R^d)$, according \@ Theorem~\ref{l-extension} -- 
which is even uniform in $ p \in ]1,\infty[$. Thus, Lemma \ref{l-moritz} applies.
\end{proof}
\begin{rem} \label{r-vorher}
Lemma \ref{l-moritz}, applied to the special case of Jones' extension operator,
 provides an alternative proof of \cite[Theorem~1.3]{mitrea}
in case of first order Sobolev spaces, if $D$ is a $(d-1)$-set. 
In \cite{mitrea} this is achieved, even for arbitrary compact boundary parts
$D$, by a deep analysis of the support properties 
of the functions obtained by Jones' extension operator.
\end{rem}

\section{Elliptic and parabolic regularity} \label{sec4}

\noindent
In this section we prove that the interpolation property of the spaces
$W^{1,p}_D(\Omega)$ -- in conjuction with a famous result of Sneiberg
\cite{snei} -- already leads to substantial regularity results within this
scale of spaces.

\subsection{Isomorphism property for elliptic operators}
In this subsection we will prove the announced elliptic regularity
theorem, which we consider as the second essential result of this work.
Let us emphasize that spaces like $W^{-1,p}_D$ are adequate for the treatment of 
elliptic/parabolic equations, if the right hand side possibly contains distributional
objects like surface densities. In electrostatics, for instance, a charge
density on an interface causes a jump in the normal component of the
dielectric displacement, see for instance \cite[Chapter~1]{Tam}.

Let us first recall the definition of a scale of Banach spaces (see
\cite[Ch.1]{krein}, compare also \cite[Ch.~1.19.4]{triebel}).

\begin{definition} \label{d-scale}
 Consider a closed interval $I \subset [0,\infty[$ and a family of complex
 Banach spaces $\{ X_\tau \}_{\tau \in I}$. One calls this family a (complex)
 scale (of Banach spaces), if
 \begin{enumerate} 
 \item $X_\beta$ embeds continuously and densely in $X_\alpha$, whenever
	$\beta > \alpha$.
 \item For every triple $\alpha, \beta, \gamma \in I$ satisfying $\alpha <
	\beta < \gamma$ there is a positive constant $c(\alpha,\beta,\gamma)$
	such that for all $\psi \in X_\gamma$ the following interpolation
	inequality holds
	\begin{equation} \label{e-interineq}
	 \| \psi \|_{X_\beta} \le c(\alpha,\beta,\gamma)
		\|\psi\|^\frac{\gamma-\beta}{\gamma-\alpha}_{X_\alpha}
		\|\psi\|_{X_\gamma}^\frac{\beta-\alpha}{\gamma-\alpha}.
	\end{equation}
\end{enumerate}
\end{definition}

\noindent
We associate to the families $\{ W^{1,p}_D(\Omega) \}_{p \in {]1,\infty[}}$
and $\{ W^{-1,p}_D(\Omega) \}_{p\in {]1,\infty[}}$ Banach scales in the
following manner

\begin{definition} \label{d-scaleW}
 For $\tau \in {]0,1[}$ we define $X_\tau := W^{1, (1-\tau)^{-1}}_D(\Omega)$
 and $Y_\tau := W^{-1, (1-\tau)^{-1}}_D(\Omega)$.
\end{definition}

\begin{lemma} \label{l-scale}
 Let Assumptions~\ref{a-1} and \ref{a-extendgeneral} be satisfied. Then, for
 all $\tau_1, \tau_2 \in {]0,1[}$ with $\tau_1 < \tau_2$, the families
 $\{ X_\tau \}_{\tau \in [\tau_1,\tau_2]}$ and
 $\{ Y_\tau \}_{\tau \in [\tau_1,\tau_2]}$ form Banach scales.
\end{lemma}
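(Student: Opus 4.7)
The plan is to verify the two defining axioms of a Banach scale separately for the families $\{X_\tau\}$ and $\{Y_\tau\}$, pulling the heavy machinery from the interpolation identities that were already established in \thmref{t-1interpolgen} and \corref{c-remaintrue-}. Throughout, I fix $\alpha, \gamma \in [\tau_1, \tau_2]$ with $\alpha < \gamma$, set $p_\alpha := (1-\alpha)^{-1}$ and $p_\gamma := (1-\gamma)^{-1}$, and note $1 < p_\alpha < p_\gamma < \infty$.

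First I would handle the continuous dense embeddings. Since $\Omega$ is bounded, $W^{1,p_\gamma}(\Omega) \hookrightarrow W^{1,p_\alpha}(\Omega)$ continuously, hence $X_\gamma \hookrightarrow X_\alpha$; density is immediate, because $C^\infty_D(\Omega) \subset X_\gamma$ is already dense in $X_\alpha$ by \defnref{d-1}. For the family $\{Y_\tau\}$, the embedding goes the same way: $p_\gamma' < p_\alpha'$ yields a continuous (and, by the same $C^\infty_D$-argument, dense) embedding $W^{1,p_\alpha'}_D(\Omega) \hookrightarrow W^{1,p_\gamma'}_D(\Omega)$, and passing to antiduals produces a continuous embedding $Y_\gamma \hookrightarrow Y_\alpha$.

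Next I would establish the interpolation inequality \eqref{e-interineq}. Given $\alpha < \beta < \gamma$, set $\theta := (\beta-\alpha)/(\gamma-\alpha) \in {]0,1[}$ and check by a one-line calculation that $1/p_\beta = (1-\theta)/p_\alpha + \theta/p_\gamma$. \thmref{t-1interpolgen} then identifies $X_\beta = [X_\alpha, X_\gamma]_\theta$, and the classical norm estimate of the complex interpolation functor (\cite[Ch.~1.9.3]{triebel}) yields
\[
 \|\psi\|_{X_\beta} \le c\, \|\psi\|_{X_\alpha}^{1-\theta}\, \|\psi\|_{X_\gamma}^{\theta},
\]
which is exactly \eqref{e-interineq} with exponents $(\gamma-\beta)/(\gamma-\alpha)$ and $(\beta-\alpha)/(\gamma-\alpha)$. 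The identical argument, with \corref{c-remaintrue-} in place of \thmref{t-1interpolgen}, handles the scale $\{Y_\tau\}$.

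The main obstacle I expect is the density of $Y_\gamma$ in $Y_\alpha$, since dualizing a dense continuous embedding does not automatically yield a dense embedding of the duals. My workaround is to sandwich a Lebesgue space between the two: $L^{p_\gamma}(\Omega) \hookrightarrow Y_\gamma$ continuously (Hölder pairing with $W^{1,p_\gamma'}_D \hookrightarrow L^{p_\gamma'}$), $L^{p_\gamma}(\Omega)$ sits densely in $L^{p_\alpha}(\Omega)$ on the bounded domain, and $L^{p_\alpha}(\Omega)$ is dense in $Y_\alpha$. The last claim reduces by reflexivity and Hahn--Banach to showing that any $\phi \in W^{1,p_\alpha'}_D(\Omega)$ with $\int_\Omega f \bar\phi \, \dx = 0$ for all $f \in L^{p_\alpha}(\Omega)$ must vanish, which is clear since $W^{1,p_\alpha'}_D(\Omega) \hookrightarrow L^{p_\alpha'}(\Omega)$. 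Composing the three embeddings finishes the density argument and hence the proof.
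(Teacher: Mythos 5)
Your proof is correct and follows the same essential route as the paper: pull the interpolation identities from Theorem~\ref{t-1interpolgen} and Corollary~\ref{c-remaintrue-} (with the same index arithmetic $1-\beta = (1-\alpha)(1-\theta) + (1-\gamma)\theta$), then read off the multiplicative norm estimate \eqref{e-interineq} as the standard interpolation inequality of the complex method.

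The one place where you go beyond the paper's own proof is the density half of axiom (i) of Definition~\ref{d-scale}. The paper only records the interpolation identities and the norm inequality, and says nothing explicitly about the continuous dense embeddings; for the primal scale $\{X_\tau\}$ this is indeed immediate because $C^\infty_D(\Omega)$ sits in each $X_\tau$, but for the dual scale $\{Y_\tau\}$ density is not automatic from the interpolation identity alone, since density of $A_1$ in $A_0$ cannot in general be recovered from density of $A_0 \cap A_1$ in $[A_0,A_1]_\theta$. Your sandwich $L^{p_\gamma}(\Omega) \hookrightarrow Y_\gamma$, $L^{p_\gamma}(\Omega)$ dense in $L^{p_\alpha}(\Omega)$, and $L^{p_\alpha}(\Omega)$ dense in $Y_\alpha$ (by reflexivity and Hahn--Banach against $W^{1,p_\alpha'}_D(\Omega) \hookrightarrow L^{p_\alpha'}(\Omega)$) cleanly settles this. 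So the approaches are the same in spirit, but yours is the more complete account, making explicit a point the published proof leaves to the reader.
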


\begin{proof}
 We show more, namely: for every $\alpha, \beta, \gamma \in {]0,1[}$ with
 $\alpha < \beta < \gamma$ one has
 \begin{equation} \label{e-intscle}
  X_\beta = [ X_\alpha, X_\gamma ]_\frac{\beta-\alpha}{\gamma-\alpha}
	\text{ and } Y_\beta = [ Y_\alpha, Y_\gamma
	]_\frac{\beta-\alpha}{\gamma-\alpha}.
 \end{equation}
 Putting $\theta := \frac{\beta-\alpha}{\gamma-\alpha}$, one has $1 - \beta =
 (1 - \alpha)(1 - \theta) + (1-\gamma) \theta$. Thus the equalities in
 \eqref{e-intscle} follow from Theorem~\ref{t-1interpolgen} and
 Corollary~\ref{c-remaintrue-}. The inequality \eqref{e-interineq} is then the
 interpolation inequality for complex interpolation.
\end{proof}

Throughout the rest of the paper we assume that the following is satisfied:

\begin{assu} \label{a-coeff}
 Let $\mu$ be a matrix valued, bounded, measurable, elliptic function on
 $\Omega$. The latter condition means that $\mathrm{Re} (
 \mu(\mathrm x) \xi\cdot \overline \xi ) \ge \mu_\bullet |\xi|^2$ for some positive
 constant $\mu_\bullet$, all $\xi \in \C^d$ and almost all $\mathrm x \in
 \Omega$.
\end{assu}

\begin{definition} \label{d-opera}
 For every $p \in {]1,\infty[}$, we define the operator $-\nabla \cdot \mu
 \nabla : W^{1,p}_D(\Omega) \to W^{-1,p}_D(\Omega) $ by
 \begin{equation}\label{e-0815}
  \langle - \nabla \cdot \mu \nabla v, w \rangle := \int_\Omega \mu \nabla v
	\cdot \nabla \overline w \; d \mathrm{x}, \quad v \in
	W^{1,p}_D(\Omega), \ w \in W^{1,p'}_D(\Omega),
 \end{equation}
 where $\langle \cdot, \cdot \rangle$ denotes the antidual pairing between 
 $\bigl (W^{1,p'}_D(\Omega)\bigr )' = W^{-1,p}_D(\Omega)$ and
 $W^{1,p'}_D(\Omega)$.
\end{definition}
%
Let us briefly recall the (well known) connection between the operator
$- \nabla \cdot \mu \nabla:W^{1,p}_D(\Omega) \to W^{-1,p}_D(\Omega)$ and mixed 
boundary value problems. For this,
consider the mixed boundary value problem
\begin{align}
  -\nabla \cdot (\mu \nabla u ) &= f_\Omega \in L^p(\Omega)
	\label{e-modelset04} \\
  u|_D  &= 0 \label{e-modelset03} \\
  \nu \cdot \mu \nabla u  &= f_\Gamma \in L^p(\Gamma), \label{e-modelset2}
\end{align}
where $\nu$ denotes the outward unit normal of the domain. Here
\eqref{e-modelset04} is to be understood in the sense of distributions on
$\Omega$ and \eqref{e-modelset2} is to be understood in a generalized sense.
If one defines $f \in W^{-1,p}_D(\Omega)$ by $\langle f, v \rangle :=
\int_\Omega f_\Omega \overline v \; d \mathrm x + \int_\Gamma f_\Gamma
\overline v \; d \mathcal H_{d-1}$, then, under reasonable assumptions on
$\Omega, \Gamma$ and $D$ an adequate functional analytic formulation of the
problem \eqref{e-modelset04}--\eqref{e-modelset2} is the operator equation
$-\nabla \cdot \mu \nabla u = f$, see \cite[Ch.~1.2]{cia}, \cite[Ch~II.2]{ggz})
or \cite{daners} for details; compare also \cite{hoemb} and \cite{elstmeyrreh}
for a different approach.

When restricting the range space of the operator $- \nabla \cdot \mu \nabla $
to $L^2(\Omega)$, one obtains an operator for which the elements $\psi$ of its
domain satisfy the conditions $\psi|_D = 0$ in the sense of traces and $\nu
\cdot \mu \nabla \psi = 0$ on $\Gamma$ in a generalized sense.\\
It follows the second main result of this work:
\begin{theorem} \label{t-groegerrepr}
 Let Assumptions~\ref{a-1}, \ref{a-extendgeneral} and \ref{a-coeff} be
 satisfied. Then there is an open interval $I$ containing $2$, such that the
 operator
 \begin{equation} \label{e-topiso}
  - \nabla \cdot \mu \nabla + 1 : W^{1,p}_D(\Omega) \to W^{-1,p}_D(\Omega)
 \end{equation}
 is a topological isomorphism for all $p \in I$.
\end{theorem}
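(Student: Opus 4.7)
The plan is to establish the $p=2$ case by classical Lax--Milgram arguments and then bootstrap to a neighborhood of $p=2$ via Sneiberg's isomorphism theorem, using the Banach scales already constructed in Lemma~\ref{l-scale}.

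First I would show that for every $p \in {]1,\infty[}$ the operator in \eqref{e-topiso} is well defined and bounded, with an operator norm that depends only on $\|\mu\|_\infty$ and $p$. Indeed, for $v \in W^{1,p}_D(\Omega)$ and $w \in W^{1,p'}_D(\Omega)$ one has
\[
 |\langle(-\nabla\cdot\mu\nabla + 1)v,w\rangle|
 \le \|\mu\|_\infty \|\nabla v\|_{L^p} \|\nabla w\|_{L^{p'}}
  + \|v\|_{L^p}\|w\|_{L^{p'}},
\]
by H\"older's inequality and the definition \eqref{e-0815}. Moreover, the family of operators is consistent in the sense that for $p\ge q$ the operator on $W^{1,p}_D(\Omega)$ is the restriction of the operator on $W^{1,q}_D(\Omega)$, since both are defined by the same sesquilinear form.

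Next I would treat the case $p=2$. The sesquilinear form
\[
 \mathfrak a(v,w) := \int_\Omega \mu\nabla v\cdot\nabla\overline{w}\,d\mathrm{x}
	+ \int_\Omega v\overline{w}\,d\mathrm{x}
\]
is continuous on $W^{1,2}_D(\Omega)\times W^{1,2}_D(\Omega)$ and, thanks to the ellipticity condition in Assumption~\ref{a-coeff} together with the zero order term, satisfies $\mathrm{Re}\,\mathfrak a(v,v)\ge \min(\mu_\bullet,1)\|v\|_{W^{1,2}(\Omega)}^2$. Hence, by the Lax--Milgram lemma (applied to the antidual), $-\nabla\cdot\mu\nabla + 1$ is a topological isomorphism from $W^{1,2}_D(\Omega)$ onto $W^{-1,2}_D(\Omega)$.

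With this in hand, the proof is completed by Sneiberg's stability theorem \cite{snei}. Set $X_\tau, Y_\tau$ as in Definition~\ref{d-scaleW}; by Lemma~\ref{l-scale}, for any interval $[\tau_1,\tau_2]\subset{]0,1[}$ the families $\{X_\tau\}$ and $\{Y_\tau\}$ are complex Banach scales obtained by complex interpolation. The consistent family of bounded operators $A_\tau:=-\nabla\cdot\mu\nabla+1:X_\tau\to Y_\tau$ is therefore a bounded morphism of Banach scales, and by the previous step $A_{1/2}$ is an isomorphism. Sneiberg's theorem then yields an open neighborhood $J\subset{]0,1[}$ of $1/2$ such that $A_\tau$ remains a topological isomorphism for every $\tau\in J$. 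Translating back via $p=(1-\tau)^{-1}$, this gives an open interval $I\ni 2$ on which \eqref{e-topiso} is an isomorphism.

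The only delicate point is to verify the hypotheses of Sneiberg's theorem cleanly, i.e.\@ that we really have two \emph{complex} Banach scales in the sense of Definition~\ref{d-scale} and that $\{A_\tau\}$ is a consistent family of bounded operators between them. The interpolation identities needed for the scale property are exactly those established in Theorem~\ref{t-1interpolgen} and Corollary~\ref{c-remaintrue-}; the density of $X_\beta$ in $X_\alpha$ for $\beta>\alpha$ follows because $C^\infty_D(\Omega)$ is dense in every $W^{1,p}_D(\Omega)$ and embeds continuously; and consistency of $\{A_\tau\}$ was already noted above. Everything else in Sneiberg's theorem is then automatic.
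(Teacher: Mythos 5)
Your proposal is correct and follows essentially the same route as the paper: establish continuity of $-\nabla\cdot\mu\nabla+1$ on each $W^{1,p}_D(\Omega)$, obtain the isomorphism for $p=2$ via Lax--Milgram, and then invoke Sneiberg's theorem on the Banach scales $\{X_\tau\}$, $\{Y_\tau\}$ from Lemma~\ref{l-scale} to open up the interval around $\tau=1/2$. The extra care you take in spelling out the consistency of the operator family and the verification of the scale hypotheses is a welcome addition but does not change the argument.
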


\begin{proof}
 We know from Lemma~\ref{l-scale} that the families $\{X_\tau \}_{\tau \in
 [\alpha, \beta]}$ and $\{Y_\tau \}_{\tau \in [\alpha, \beta]}$ with $\alpha,
 \beta \in {]0,1[}$ form Banach scales. The mapping in \eqref{e-topiso} is
 continuous for all $p$, due to the boundedness of the coefficient function
 $\mu$, what is to be interpreted as the continuity of
 \begin{equation} \label{e-scael}
  - \nabla \cdot \mu \nabla + 1 : X_\tau \to Y_\tau
 \end{equation}
 for all $\tau \in {]0,1[}$. Lastly, the quadratic form $W^{1,2}_D(\Omega) \ni
 \psi \mapsto \int_\Omega (\mu \nabla \psi \cdot \nabla \overline \psi +
 |\psi|^2 ) \; d \mathrm x$ is coercive. Hence the Lax-Milgram lemma gives the
 continuity of the inverse of \eqref{e-topiso} in the case of $p = 2$. In the
 scale terminology, this is nothing else but the continuity of
 $(-\nabla \cdot \mu \nabla + 1)^{-1} : Y_\tau \to X_\tau$ in case of $\tau =
 \frac12$. A deep theorem of Sneiberg (\cite{snei}, see also
 \cite[Lemma~4.16]{auschmem} or \cite{vigna}) says that the set of parameters
 $\tau$ for which \eqref{e-scael} is a topological isomorphism, is open. Since
 $\frac12$ is contained in this set, it cannot be empty.
\end{proof}

\begin{rem} \label{r-notlarg}
 \begin{enumerate} 
 \item Again interpolation shows that the values $p$, for which
	\eqref{e-topiso} is a topological isomorphism, form an interval $I$.
	Due to the Sneiberg result, this interval is an open one.
 \item If $\mu$ takes real, symmetric matrices as values, then
	\begin{equation} \label{e-op}
	  - \nabla \cdot \mu \nabla + 1 : W^{1,p'}_D(\Omega) \to
		W^{-1,p'}_D(\Omega)
	\end{equation} 
	is the adjoint to
	\begin{equation} \label{e-op'}
	  - \nabla \cdot \mu \nabla + 1 : W^{1,p}_D(\Omega) \to
		W^{-1,p}_D(\Omega)
	\end{equation}
	with respect to the sesquilinear pairing. Hence, \eqref{e-op} is a
	topological isomorphism iff \eqref{e-op'} is. Thus, the interval $I$
	must be of the form $I = {]\frac{q}{q-1},q[}$ for some $q > 2$.
 \item It is well-known that the interval $I$ depends on the domain $\Omega$
	(see \cite{jer/ke}, \cite{dauge}) as well as on $\mu$ (see \cite{mey}
	or \cite{mazelresc}), and on $D$ (see \cite{mitr}). The most important
	point is that the length of $I$ may be arbitrarily small, see
	\cite[Ch.~4]{e/r/s} for a striking example. Even in smooth situations
	it cannot be expected that $4$ belongs to $I$, as the pioneering
	counterexample in \cite{shamir} shows.
 \item \label{r-notlarg:iv} If $\mathcal M$ is a set of coefficient functions
	$\mu$ with a common $L^\infty$ bound and a common ellipticity constant,
	then one can find a common open interval $I_\mathcal M$ around $2$,
	such that \eqref{e-topiso} is a topological isomorphism for all $\mu
	\in \mathcal M$ and all $p \in I_\mathcal M$. Finally, one has
	\[ \sup_{ p \in I_\mathcal M} \sup_{\mu \in \mathcal M} \bigl\|
		(-\nabla \cdot \mu \nabla + 1)^{-1}
		\bigr\|_{\mathcal L(W^{-1,p}_D; W^{1,p}_D)} < \infty.
	\]
	The proof of this is completely analogous to \cite[Thm.~1]{groe/reh}.
\item It is interesting to observe that in the case of two space dimensions
	Theorem~\ref{t-groegerrepr} immediately implies the H\"older
	continuity of the solution as long as the right hand side belongs to a
	space $W^{-1,p}_D(\Omega)$ with $p > 2$. The question arises whether
	this remains true in higher dimensions, for $p$ exceeding the
	corresponding space dimension -- despite the fact that the gradient of
	the solution does only admit integrability a bit more than $2$ in
	general. We will prove -- by entirely different methods -- in a
	forthcoming paper \cite{Jo} that this is indeed the case.
\end{enumerate}
\end{rem}

\begin{corollary} \label{c-spec}
 Let Assumptions~\ref{a-1}, \ref{a-extendgeneral} and \ref{a-coeff} be
 satisfied and let $I$ denote the interval guaranteed by
 Theorem~\ref{t-groegerrepr}. Then the following holds
 \begin{enumerate} 
 \item The operator
	\begin{equation} \label{e-topisolam}
	  - \nabla \cdot \mu \nabla + \lambda : W^{1,p}_D(\Omega) \to
		W^{-1,p}_D(\Omega)
	\end{equation}
	is a topological isomorphism for all $p \in I \cap [2,\infty[$, if
	$- \lambda \in \C$ is not an eigenvalue of $-\nabla \cdot \mu \nabla$.
 \item If $0$ is the only constant function in the space $W^{1,2}_D(\Omega)$,
then $0$ is not an eigenvalue of $-\nabla \cdot \mu \nabla$.
\end{enumerate}
\end{corollary}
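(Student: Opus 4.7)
The plan for part (i) is a Fredholm-type argument. I would start by decomposing
\[
-\nabla \cdot \mu \nabla + \lambda = (-\nabla \cdot \mu \nabla + 1) + (\lambda - 1) J_p,
\]
where $J_p : W^{1,p}_D(\Omega) \to W^{-1,p}_D(\Omega)$ is the canonical embedding, and then show that $J_p$ is compact. The compactness follows by factoring as $W^{1,p}_D(\Omega) \hookrightarrow L^p(\Omega) \hookrightarrow W^{-1,p}_D(\Omega)$: the first arrow is compact by Rellich--Kondrachov, available here via the extension operator from Assumption~\ref{a-extendgeneral} (see Remark~\ref{r-remain}\,(iii)), and the second is continuous by duality from the embedding $W^{1,p'}_D(\Omega) \hookrightarrow L^{p'}(\Omega)$. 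Since Theorem~\ref{t-groegerrepr} makes $-\nabla \cdot \mu \nabla + 1$ a topological isomorphism for $p \in I$, the operator $-\nabla \cdot \mu \nabla + \lambda$ is a compact perturbation of an isomorphism, hence Fredholm of index zero. Isomorphism is therefore equivalent to injectivity.

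For the injectivity step, suppose $u \in W^{1,p}_D(\Omega)$ satisfies $(-\nabla \cdot \mu \nabla + \lambda) u = 0$. Since $\Omega$ is bounded and $p \geq 2$, an approximating sequence from $C^\infty_D(\Omega)$ in the $W^{1,p}$-norm also converges in the $W^{1,2}$-norm, so $u \in W^{1,2}_D(\Omega)$. Reading the equation inside $W^{-1,2}_D(\Omega)$, $u$ would be an eigenfunction of $-\nabla \cdot \mu \nabla$ associated with the eigenvalue $-\lambda$; the hypothesis forces $u = 0$.

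For part~(ii), I would test directly: if $u \in W^{1,2}_D(\Omega)$ satisfies $-\nabla \cdot \mu \nabla u = 0$, then inserting $w = u$ into the sesquilinear form of Definition~\ref{d-opera} yields $\int_\Omega \mu \nabla u \cdot \nabla \overline{u} \, d\mathrm{x} = 0$. Taking real parts and invoking the ellipticity constant $\mu_\bullet$ from Assumption~\ref{a-coeff} forces $\nabla u = 0$ almost everywhere, so $u$ is constant on the connected domain $\Omega$; the hypothesis then gives $u = 0$. The only point that merits care is the $p \geq 2$ restriction in (i), which is precisely what allows me to embed $W^{1,p}_D(\Omega)$-solutions into $W^{1,2}_D(\Omega)$ and thus connect the hypothesis to the Fredholm reduction; everything else is routine bookkeeping.
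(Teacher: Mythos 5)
Your proof is correct and follows essentially the same route as the paper. For part (i), the paper also relies on the compactness of $W^{1,p}_D(\Omega) \hookrightarrow L^p(\Omega) \hookrightarrow W^{-1,p}_D(\Omega)$ (Remark~\ref{r-remain}\,(iii)) and invokes Riesz--Schauder theory (citing Kato, Ch.~III.6.8) to reduce to injectivity, which is precisely the compact-perturbation/Fredholm-index-zero argument you spell out; and the paper makes the same observation that, for $p \geq 2$ and $\Omega$ bounded, a $W^{1,p}_D$-eigenfunction is automatically a $W^{1,2}_D$-eigenfunction, which is why the restriction $p \in I \cap [2,\infty[$ appears. For part (ii) the paper tests the equation against the eigenfunction and uses ellipticity to force $\nabla w = 0$, exactly as you do.
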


\begin{proof}
 \begin{enumerate}
 \item According to Remark~\ref{r-remain}~\ref{r-remain:iii}, the embeddings
	$W^{1,p}_D(\Omega) \hookrightarrow L^p(\Omega) \hookrightarrow
	W^{-1,p}_D(\Omega)$ are compact. Thus \eqref{e-topisolam} can only fail to be an
	isomorphism, if $-\lambda$ is an eigenvalue for $-\nabla \cdot \mu
	\nabla$, according to the Riesz-Schauder theory,
	cf.\@ \cite[Ch.~III.6.8]{kato}.	Observe that an eigenvalue for
	$- \nabla \cdot \mu \nabla$, when considered on $W^{-1,p}_D(\Omega)$
	for $p > 2$ is automatically an	eigenvalue when $-\nabla \cdot \mu
	\nabla$ is considered on $W^{-1,2}_D(\Omega)$.
 \item Assume that this is false, and let $w \in \dom_{W^{-1,p}_D(\Omega)}
	(\nabla \cdot \mu \nabla) \subset W^{1,2}_D(\Omega)$ be the
	corresponding eigenfunction. Then, testing the equation $-\nabla \cdot
	\mu \nabla w = 0$ by $w$, one gets
	\[ 0 = \langle -\nabla \cdot \mu \nabla w, w \rangle \ge c \int_\Omega
		\| \nabla w \|^2 \; \mathrm{d} \mathrm x,
	\]
	thanks to the ellipticity of $\mu$. Hence, $w$ has to be constant on
	$\Omega$, and, consequently, must be $0$ by supposition. 
	\qedhere
\end{enumerate}
\end{proof}
\begin{rem} \label{c-relevant}
$W^{1,2}_D(\Omega)$ does not contain any nonzero constant function if $D$ is a
$(d-1)$-set and the boundary around only one point in $D$ possesses a 
bi-Lipschitzian chart around.
\end{rem}
\subsection{Analytic semigroups}

In the sequel we are going to show how to exploit the elliptic regularity
result for proving resolvent estimates for the operators $-\nabla \cdot \mu
\nabla$, which assure the generator property for an analytic semigroup on
suitable spaces $W^{-1,p}_D(\Omega)$. It is well known that this property
allows to solve parabolic equations like
\[ u' - \nabla \cdot \mu \nabla u = f; \quad u(0) = u_0,
\]
where the right hand side $f$ depends H\"older continuously on time (or even
suitably on the solution $u$ itself), see \cite{luna} or \cite{henry}. Since
we proceed very similar to \cite{groe/reh} we do not point out all details but
refer to that paper.

\begin{theorem} \label{t-analythg}
 Let Assumptions~\ref{a-1}, \ref{a-extend} and \ref{a-coeff} be satisfied.
 Suppose, additionally, that $\overline \Omega \subset \R^d$ is a $d$-set.
 Then the following assertions hold true.
 \begin{enumerate} 
  \item \label{t-analythg:i} There is an open interval $J$ containing $2$, such
	that the operator $\nabla \cdot \mu \nabla$ generates an analytic
	semigroup on $W^{-1,p}_D(\Omega)$, as long as $p \in J$.
  \item \label{t-analythg:ii} If $\mathcal M$ is a set of coefficient functions
	$\mu$ with common $L^\infty$ bound and common ellipticity constant, one
	can find -- in the spirit of \ref{t-analythg:i} -- a common interval
	$J_\mathcal M$ for all these $\mu \in \mathcal M$.
  \item \label{t-analythg:iii} There is an open interval $J_\mathcal M$
	containing $2$ such that for all $p \in J_\mathcal M$ one has resolvent
	estimates like
	\begin{equation} \label{e-resestim}
	  \| \bigl( -\nabla \cdot \mu \nabla + 1 +\lambda \bigr)^{-1}
		\|_{\mathcal L(W^{-1,p}_D(\Omega))} \le \frac{c}{1+|\lambda|},
	\end{equation}
	which are uniform in $\mu \in \mathcal M$, $p \in J_\mathcal M$ and
	$\lambda \in \C_+ := \{\vartheta \in \C : \mathrm{Re}(\vartheta) \ge
	0\}$, i.e.\@ the same constant $c$ may be taken for all these
	parameters.
 \end{enumerate}
\end{theorem}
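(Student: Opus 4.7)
The plan is to obtain part~\ref{t-analythg:iii} first by reducing to the sectorial form estimate at $p=2$ and extrapolating in $p$ via Sneiberg's theorem; parts~\ref{t-analythg:i} and~\ref{t-analythg:ii} will then follow from the standard generation criterion for analytic semigroups. The crucial device is a $\lambda$-weighted renorming of the Banach scales, designed to keep every relevant constant uniform in $\lambda\in\mathbb C_+$ and $\mu\in\mathcal M$. As a preparatory step, I would note that Assumption~\ref{a-extend} together with Theorem~\ref{t-domainjones} produces a $p$-uniform extension operator $W^{1,p}_D(\Omega)\to W^{1,p}_D(\mathbb R^d)$, so that Assumption~\ref{a-extendgeneral} is in force; Theorem~\ref{t-groegerrepr} together with Remark~\ref{r-notlarg}~\ref{r-notlarg:iv} then yields an open interval $I_{\mathcal M}\ni 2$ on which $-\nabla\cdot\mu\nabla+1:W^{1,p}_D(\Omega)\to W^{-1,p}_D(\Omega)$ is a topological isomorphism with bounds uniform in $\mu\in\mathcal M$. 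The $d$-set hypothesis on $\overline\Omega$ supplies in addition the Rellich--Kondrachov-type compactness one wants downstream.

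For $p=2$ and $\lambda\in\mathbb C_+$, the sesquilinear form $a_\mu^\lambda[u,v]:=\int_\Omega \mu\nabla u\cdot\nabla\bar v+(1+\lambda)\int_\Omega u\bar v$ on $W^{1,2}_D(\Omega)$ is bounded and sectorial. Equipping $W^{1,2}_D(\Omega)$ with the $\lambda$-graph norm $\|\nabla u\|_{L^2}+(1+|\lambda|)^{1/2}\|u\|_{L^2}$ and $W^{-1,2}_D(\Omega)$ with the corresponding dual norm, Lax--Milgram turns $1+\lambda-\nabla\cdot\mu\nabla$ into a uniform isomorphism between these rescaled spaces, with operator norm and inverse norm bounded by constants depending only on the opening of a sector containing $\mathbb C_+$, on $\mu_\bullet$, and on $\|\mu\|_\infty$.

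I would then extrapolate in $p$ by Sneiberg. For each $\lambda$ I would define weighted scales $X_\tau^\lambda:=W^{1,(1-\tau)^{-1}}_D(\Omega)$ with norm $\|\nabla u\|_{L^{(1-\tau)^{-1}}}+(1+|\lambda|)^{1/2}\|u\|_{L^{(1-\tau)^{-1}}}$ and $Y_\tau^\lambda$ the corresponding dual. Because $(1+|\lambda|)^{1/2}$ is a positive scalar, complex interpolation of the $L^p$-contribution transfers unchanged, and these weighted families remain Banach scales with interpolation constants independent of $\lambda$. The operators $T_{\mu,\lambda}:=1+\lambda-\nabla\cdot\mu\nabla:X_\tau^\lambda\to Y_\tau^\lambda$ have uniformly bounded norms for $\tau$ in a neighbourhood of $1/2$, $\mu\in\mathcal M$ and $\lambda\in\mathbb C_+$, and at $\tau=1/2$ they are uniform isomorphisms by the previous step. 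Sneiberg's theorem then produces a common interval $J_{\mathcal M}\ni 2$ on which $T_{\mu,\lambda}$ remains an isomorphism with uniformly bounded inverse, and undoing the rescaling yields~\eqref{e-resestim}.

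The main obstacle will be the quantitative bookkeeping in the Sneiberg step: one has to verify that the $\lambda$-weighted norms genuinely yield Banach scales whose interpolation constants are controlled only by the uniform constants produced at $p=2$, so that the openness radius in Sneiberg's theorem does not shrink as $|\lambda|\to\infty$ or as $\mu$ varies in $\mathcal M$. This is a careful but routine task, closely parallel to~\cite{groe/reh}, which is why the authors refer to that paper for the full details.
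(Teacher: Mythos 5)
Your route is genuinely different from the paper's, and I do not think it closes as stated.

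The paper proves part~\ref{t-analythg:iii} via the Gr\"oger--Rehberg dimension-lift: it passes from $\Omega$ to $\widetilde\Omega=\Omega\times{]0,1[}$ with $\widetilde\Gamma=\Gamma\times{]0,1[}$, and for each $\lambda\in\C_+$ it encodes $\lambda$ into a $(d+1)\times(d+1)$ coefficient tensor $\widetilde\mu$ (formula~\eqref{e-coeffi}) with $L^\infty$ bound and ellipticity constant uniform in $\lambda$. One then applies the already established elliptic result (Theorem~\ref{t-groegerrepr} together with Remark~\ref{r-notlarg}~\ref{r-notlarg:iv}) to the family $\{-\nabla\cdot\widetilde\mu\nabla+1\}$ on $\widetilde\Omega$ at a fixed interval of exponents, and transfers the uniform bound back to $\Omega$. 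This is exactly why the extra hypothesis ``$\overline\Omega$ is a $d$-set'' appears: it is needed so that $\widetilde D=(\overline\Omega\times\{0,1\})\cup(D\times[0,1])$ is a $d$-set and the whole interpolation/regularity machinery applies on the lifted domain. Your proposed proof never uses that hypothesis; that alone should have prompted suspicion, because either you have silently proved a stronger theorem or something is missing.

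The missing piece is precisely the step you flag as ``careful but routine.'' You claim that the $\lambda$-weighted scales $X^\lambda_\tau$, with norm $\|\nabla u\|_{L^p}+(1+|\lambda|)^{1/2}\|u\|_{L^p}$, interpolate (by the complex method) with constants independent of $\lambda$, on the grounds that $(1+|\lambda|)^{1/2}$ is a positive scalar. That reasoning would be valid if the scalar multiplied the \emph{entire} norm, but it only weights the $L^p$-contribution; the weighted space is the intersection $W^{1,p}_D\cap\big((1+|\lambda|)^{-1/2}L^p\big)$, and complex interpolation of intersections does not commute with intersecting in general. The paper's own mechanism for the interpolation identity $\bigl[W^{1,p_0}_D,W^{1,p_1}_D\bigr]_\theta=W^{1,p}_D$ runs through the Jonsson--Wallin projection $\mathcal P=1-\mathcal E_D\mathcal R_D$ (Theorem~\ref{t-1interpol}), which is bounded on $W^{1,p}(\R^d)$ but \emph{not} on $L^p(\R^d)$, since $\mathcal R_D$ loses $1-\tfrac1p$ derivatives and $\mathcal E_D$ regains them. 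Consequently $\mathcal P$ is not bounded on the $\lambda$-weighted space with a $\lambda$-independent constant, and the complemented-subspace argument underlying the interpolation identity degenerates as $|\lambda|\to\infty$. One can check that the one-sided inclusion $\bigl[X^\lambda_{\tau_0},X^\lambda_{\tau_1}\bigr]_\theta\hookrightarrow X^\lambda_{\tau}$ holds with a uniform constant (by Krein--Petunin-type inequalities), but the reverse inclusion --- the one actually needed to apply a quantitative Sneiberg argument uniformly in $\lambda$ --- is not established by your argument, and I see no easy fix short of reconstructing something like the dimension-lift.

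A smaller point: your $p=2$ step invokes Lax--Milgram for the form $a^\lambda_\mu$, but coercivity alone does not give the $\lambda$-uniform bound in the $\lambda$-graph norm, since for $\lambda$ near the imaginary axis one only has $\mathrm{Re}\,\lambda\ge 0$ while $|\lambda|$ is large. What is really needed is the numerical-range (sectoriality) estimate $|a^\lambda_\mu[u,u]|\gtrsim\|\nabla u\|_2^2+(1+|\lambda|)\|u\|_2^2$, which follows from the sector bound $|\mathrm{Im}\,a_\mu[u,u]|\le\tan\theta\cdot\mathrm{Re}\,a_\mu[u,u]$; you gesture at sectoriality but then fall back on the Lax--Milgram phrasing. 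That is repairable, unlike the interpolation gap.
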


\begin{proof}
 Assertion~\ref{t-analythg:iii} implies points \ref{t-analythg:i} and
 \ref{t-analythg:ii}, so we concentrate on this.
 Concerning the $p$'s above $2$ one proceeds exactly as in \cite{groe/reh}:
 Assumption~\ref{a-extend} provides an extension operator $\mathfrak E$ which
 acts continuously between the spaces $W^{1,p}_D(\Omega)$ and
 $W^{1,p}_D(\R^d)$, cf.\@ Theorem~\ref{t-domainjones}. This leads, via
 Corollary~\ref{c-spec}, to a (nontrivial) interval $I_0 := [2,p_0]$, such that
 \eqref{e-topisolam} is a topological isomorphism for all $p \in I_0$ and all
 $\lambda \in \C_+$. In a next step, to the operators $-\nabla \cdot \mu
 \nabla$ we will associate operators on the extended domain $\widetilde \Omega
 := \Omega \times {]0,1[}$. The 'extended' boundary part $\widetilde \Gamma$ we
 define as $\widetilde \Gamma := \Gamma \times {]0,1[}$, thus obtaining
 \begin{equation} \label{e-Rand}
   \widetilde D := \partial \widetilde \Omega \setminus \widetilde \Gamma =
	\bigl( \overline{\Omega} \times \{0,1\} \bigr) \cup \bigl( D \times
	{]0,1[} \bigr) = \bigl( \overline{\Omega} \times \{0,1\} \bigr) \cup
	\bigl( D \times [0,1] \bigr).
 \end{equation}
 Since $\overline{\Omega}$ is a $d$-set and $D$ is a $(d-1)$-set by
 supposition, it is clear that $\widetilde D$ is a $d$-set. Moreover, it is not
 hard to see that $\widetilde{\Gamma}$ satisfies (mutatis mutandis) the
 condition in Assumption~\ref{a-extend}.

 The following considerations can be carried out in detail in exactly the
 same way as in \cite{groe/reh}, and we give here only a short summary of the
 main steps. As in \cite{groe/reh}, for every $\lambda \in \C_+$ and $\mu \in
 \mathcal M$, one defines a coefficient function $\widetilde \mu$ on
 $\widetilde \Omega$ in the following manner: Let $\mu^\bullet$ be the
 $L^\infty$ bound for the coefficient function $\mu$ and $\mu_\bullet$ its
 ellipticity constant. Then we introduce the coefficient function for the
 auxiliary divergence operator on $\widetilde \Omega$ by
 \begin{equation} \label{e-coeffi}
   \widetilde \mu_{j,k} (\mathrm x,t) = \begin{cases}
	  (1 - \frac{\mu_\bullet}{2\mu ^\bullet} \sign(\mathrm{Im}(\lambda)) i)
		\mu_{j,k} (\mathrm x), & \text {if } j,k \in \{1, \dots, d\},
		\\
	  0 & \text{if } j = d + 1 \text{ or } k = d + 1, \\
	  \frac{\lambda}{|\lambda|} \bigl( \mu^\bullet - \frac{\mu_\bullet}{2}
		 \sign(\mathrm{Im}(\lambda)) i \bigr), & \text{if } j = k =
		d + 1.
	\end{cases}
 \end{equation}

 One easily observes that all these coefficient functions admit $L^\infty$
 bounds  and ellipticity constants that are uniform in $\lambda$. Thus,
 Remark~\ref{r-notlarg}~\ref{r-notlarg:iv} applies to the operators $- \nabla
 \cdot \widetilde \mu \nabla + 1$. This gives an interval $I_1 := [2, p_1]$
 such that the norms of the operators $( -\nabla \cdot \widetilde \mu \nabla +
 1 )^{-1} : W^{-1,p}_{\widetilde D}(\widetilde \Omega) \to
 W^{1,p}_{\widetilde D}(\widetilde \Omega)$ are bounded, uniformly in $\lambda
 \in \C_+ $ and in $p \in I_1$.

 One associates to the problem $(-\nabla \cdot \mu \nabla + 1 + \lambda )u =
 f$ a problem $(-\nabla \cdot \widetilde \mu \nabla + 1) u_\lambda =
 f_\lambda$ and exploits the (uniform) regularity properties of the operators
 $-\nabla \cdot \widetilde \mu \nabla + 1$ for an estimate
 \begin{equation} \label{e-inverscont}
   \|u\|_{W^{1,p}_D(\Omega)} \le c \|f \|_{W^{-1,p}_D(\Omega)},
 \end{equation}
 where
 $c$ is independent from $f$ and $\lambda \in \C_+$. We already know 
the isomorphism property
 \[ - \nabla \cdot \mu \nabla + 1+\lambda : W^{1,p}_D(\Omega) \to
	W^{-1,p}_D(\Omega),
 \]
 thus \eqref{e-inverscont} may be expressed as
 \begin{equation} \label{e-opnorm}
   \sup_{\lambda \in \C_+} \bigl\| ( -\nabla \cdot \mu \nabla + 1 + \lambda
	)^{-1} \bigr\|_{\mathcal L(W^{-1,p}_D(\Omega); W^{1,p}_D(\Omega))} <
	\infty
 \end{equation}
 for all $p \in I_0 \cap I_1$.

 Finally, \eqref{e-opnorm} allows us to deduce the estimate
 \begin{align*}
   & \sup_{\lambda \in \C_+} |\lambda| \bigl\| ( -\nabla \cdot \mu \nabla
	+ 1 + \lambda )^{-1} \bigr\|_{\mathcal L (W^{-1,p}_D(\Omega))} \\
   =\strut& \sup_{\lambda \in \C_+} \bigl\| \lambda (-\nabla \cdot \mu \nabla
	+ 1 + \lambda )^{-1} \bigr\|_{\mathcal L (W^{-1,p}_D(\Omega))} \\
   =\strut& \sup_{\lambda \in \C_+} \bigl\| 1 - ( -\nabla \cdot \mu \nabla
	+ 1 ) ( -\nabla \cdot \mu \nabla + 1 + \lambda )^{-1}
	\bigr\|_{\mathcal L(W^{-1,p}_D(\Omega))} \\
   \le\strut&  1 + \| {-\nabla} \cdot \mu \nabla + 1
	\|_{\mathcal L(W^{1,p}_D(\Omega); W^{-1,p}_D(\Omega))}
	\sup_{\lambda \in \C_+} \bigl\| ( -\nabla \cdot \mu \nabla + 1 +
	\lambda )^{-1}
	\bigr\|_{\mathcal L(W^{-1,p}_D(\Omega); W^{1,p}_D(\Omega))} \\
   <\strut& \infty
 \end{align*}
 for all $p \in I_0 \cap I_1$.

 The case $p<2$ can be treated as follows: first, one gets the following
 resolvent estimate on $W^{1,p}_D(\Omega)$ for $p>2$:
 \begin{align*}
   & \bigl\| ( -\nabla \cdot \mu \nabla + 1 + \lambda )^{-1}
	\bigr\|_{\mathcal L(W^{1,p}_D(\Omega))} \\
   =\strut& \bigl\| ( -\nabla \cdot \mu \nabla  + 1 )^{-1} ( -\nabla \cdot \mu
	\nabla + 1 + \lambda  )^{-1} ( -\nabla \cdot \mu \nabla + 1 )
	\bigr\|_{\mathcal L(W^{1,p}_D(\Omega))} \\
   \le\strut& \bigl\| ( -\nabla \cdot \mu \nabla + 1 )^{-1}
	\bigr\|_{\mathcal L(W^{-1,p}_D(\Omega); W^{1,p}_D(\Omega))}
	\bigl\| ( -\nabla \cdot \mu \nabla + 1 + \lambda )^{-1}
	\bigr\|_{\mathcal L(W^{-1,p}_D(\Omega))} \\
   &\hspace{6cm} \times
	\| {-\nabla \cdot \mu \nabla + 1}
	\|_{\mathcal L(W^{1,p}_D(\Omega);W^{-1,p}_D(\Omega))}.
 \end{align*}
 Since the first and third factor are finite, one can use \eqref{e-resestim}.
 Then, considering the adjoint of $( -\nabla \cdot \mu \nabla + 1 + \lambda
 )^{-1}$, which is nothing else but $( -\nabla \cdot \mu^* \nabla + 1 +
 \overline \lambda )^{-1}$ on $W^{-1,p'}_D(\Omega)$, one obtains the assertion
 for $p<2$.
\end{proof}

\begin{rem} \label{r-agmon}
        One could take the suppositions in Theorem~\ref{t-analythg} even more
	general. What in fact is needed is that also the spaces
	$W^{1,p}_{\widetilde D}(\widetilde \Omega)$ possess extension
	operators. This follows in case of Assumption~\ref{a-extend} in a
	peculiarly simple way since it is self-reproducing when passing to the
	set $\Omega \times {]0,1[}$.
\end{rem}


\section{Elliptic regularity for systems}
\label{sec_systems}
In this section we apply the interpolation property of the
$W^{1,p}$-spaces in order to derive $p$-estimates for linear elliptic
operators acting on vector-valued functions. Here, for each component a
different Dirichlet boundary might be prescribed. To be more precise, we
assume the following
\begin{itemize}
 \item[(A1)] $\Omega\subset\R^d$ is a bounded domain and for $1 \le i \le m$
	the sets $D_i \subset \partial\Omega$ are closed $(d-1)$-sets. Let
	$D := \bigcap_{i=1}^m D_i$ and $\Gamma := \partial\Omega \setminus D$.
	It is assumed that $\Omega$ and $\Gamma$ satisfy
	Assumption~\ref{a-extendgeneral}, cf. Subsection 4.2.
\end{itemize}
For $p\in [1,\infty)$ we introduce the space 
\begin{align*}
 \bbW_D^{1,p}(\Omega)= \prod_{i=1}^m W^{1,p}_{D_i}(\Omega)
\end{align*}
and its dual $\bbW^{-1,p'}_D(\Omega)$ for $\frac{1}{p} + \frac{1}{p'}=1$,
Furthermore, we define the operator
$\calL_p : \bbW_D^{1,p}(\Omega) \to L^p(\Omega; \C^m \times \C^{m\times d })$
by $\calL_p(u)=(u,\nabla u)$. Given a complex valued coefficient function
$\bbA \in L^\infty(\Omega; \text{Lin}\,(\C^{m}\times \C^{m\times d}, \C^{m}
\times \C^{m\times d}))$, we investigate differential operators of the type
\begin{align*}
 \calA : \bbW_D^{1,p}(\Omega) \to \bbW_D^{-1,p}(\Omega),\quad \calA =
	\calL_{p'}^* \bbA \calL_p.
\end{align*}
The corresponding weak formulation on $\bbW^{1,2}_D(\Omega)$ reads $\langle
\calA(u), v \rangle = \int_\Omega \bbA \left( \begin{smallmatrix} u \\ \nabla u
\end{smallmatrix} \right) : \overline{ \left( \begin{smallmatrix} v \\ \nabla v
\end{smallmatrix} \right)} \; d \mathrm{x}$ for $u,v \in \bbW^{1,2}_D(\Omega)$,
where 
\[ (b_1,B_1):(b_2,B_2) = \sum_{i=1}^m b_1^i b_2^i + \sum_{j=1}^m \sum_{k=1}^d
	B_1^{jk} B_2^{jk}
\]
for $(b_1,B_1) , (b_2,B_2)\in \C^m\times \C^{m \times d}$. It is assumed that
the operator $\calA$ is elliptic. More precisely, we assume that
\begin{itemize}
\item[(A2)] There is a constant $\kappa > 0$, such that for all $v \in
  \bbW^{1,2}_D(\Omega)$ it holds $\mathrm{Re} \langle \calA v, v\rangle \geq
  \kappa\norm{v}^2_{\bbW^{1,2}(\Omega)}$.
\end{itemize}
\begin{rem}
We recall that in the case of systems of partial differential equations the
positivity property formulated in (A2) in general does not imply that the
coefficient tensor belonging to the principle part of the differential operator
is positive definite. In general, this coefficient tensor only satisfies the
weaker Legendre-Hadamard condition, cf.\@ \cite{var:GH96a}:
Assume that (A2) is satisfied for $\bbA = \left( \begin{smallmatrix}
  \bbA_{11} & \bbA_{12} \\
  \bbA_{21} & \bbA_{22} \end{smallmatrix}\right)$, where $\bbA_{22} \in
\mathrm{Lin}\,(\C^{m \times d}, \C^{m \times d})$ corresponds to the principal
part of the operator $\calA$. Then there exists a constant $c_\kappa > 0$, such
that  for all $\xi \in \C^m$ and $\eta \in \C^d$ it holds
\begin{align} \label{dk_e_posdef}
  \mathrm{Re} \big( \bbA_{22} \xi\otimes \eta : \overline{\xi \otimes \eta}
	\big) \geq c_\kappa \abs{\xi}^2 \abs{\eta}^2.
\end{align}
\end{rem} 
\begin{theorem} \label{dk_thm1}
 Let (A1) and (A2) be satisfied. Then there exists an open interval $J$
 containing $2$, such that for all $q \in J$ the operator $\calA :
 \bbW_D^{1,q}(\Omega) \to \bbW_D^{-1,q}(\Omega)$ is a topological isomorphism.
\end{theorem}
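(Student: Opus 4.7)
The plan is to mirror the proof of Theorem~\ref{t-groegerrepr}, replacing the scalar Sobolev spaces by their Cartesian products and invoking the coercivity assumption (A2) at the single exponent $p=2$. The three ingredients are: an interpolation scale for the product spaces, continuity of $\calA$ on each rung of the scale, and invertibility at the distinguished exponent $2$, after which Sneiberg's isomorphism theorem closes the argument.

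First I would verify that the interpolation identity of Theorem~\ref{t-1interpolgen} passes to each factor $W_{D_i}^{1,p}(\Omega)$: since $D=\bigcap_i D_i\subseteq D_i$, one has $\overline{\partial\Omega\setminus D_i}\subseteq\overline\Gamma$, so the geometric hypotheses supplying Assumption~\ref{a-extendgeneral} for $(\Omega,\Gamma)$ also yield, via Theorem~\ref{l-extension} together with Corollary~\ref{c-extendtrace}, continuous extension operators $W_{D_i}^{1,p}(\Omega)\to W_{D_i}^{1,p}(\R^d)$ uniformly in $p$. Taking Cartesian products and using that complex (respectively real) interpolation commutes with finite direct sums then gives
\begin{equation*}
 \bigl[\bbW_D^{1,p_0}(\Omega),\bbW_D^{1,p_1}(\Omega)\bigr]_\theta = \bbW_D^{1,p}(\Omega),\qquad \tfrac{1}{p}=\tfrac{1-\theta}{p_0}+\tfrac{\theta}{p_1},
\end{equation*}
and the analogous identity for the dual family $\{\bbW_D^{-1,p}(\Omega)\}_p$ via Corollary~\ref{c-remaintrue-}.

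Mimicking Definition~\ref{d-scaleW} and Lemma~\ref{l-scale}, I would set $\bbX_\tau:=\bbW_D^{1,(1-\tau)^{-1}}(\Omega)$ and $\bbY_\tau:=\bbW_D^{-1,(1-\tau)^{-1}}(\Omega)$; these form Banach scales on every compact subinterval of $(0,1)$. The operator $\calA=\calL_{p'}^{*}\bbA\calL_p$ is continuous from $\bbX_\tau$ into $\bbY_\tau$ for each $\tau\in(0,1)$, with norm controlled by $\norm{\bbA}_{L^\infty}$, since $\calL_p$ is bounded and $\bbA$ is essentially bounded. At the exponent $p=2$, that is $\tau_0=\tfrac12$, assumption (A2) furnishes exactly the coercivity required by the complex Lax-Milgram lemma, so $\calA:\bbX_{1/2}\to\bbY_{1/2}$ is a topological isomorphism.

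Sneiberg's theorem then asserts that the set of parameters $\tau\in(0,1)$ for which the continuous family $\calA:\bbX_\tau\to\bbY_\tau$ is invertible is open; since it contains $\tfrac12$, it contains an open neighborhood, which in the $p$-coordinate is an open interval $J\ni 2$ on which $\calA:\bbW_D^{1,q}(\Omega)\to\bbW_D^{-1,q}(\Omega)$ is a topological isomorphism. The main conceptual point, anticipated in the introduction, is that, unlike in the scalar case of Theorem~\ref{t-groegerrepr}, the principal coefficient tensor of a system need not be pointwise positive definite (it only satisfies a Legendre-Hadamard condition as in \eqref{dk_e_posdef}), so the pointwise Gr\"oger-type arguments are unavailable; the interpolation/Sneiberg approach is attractive precisely because it uses only coercivity of the whole operator at $p=2$ together with the scale structure on both sides.
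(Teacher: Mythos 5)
Your proposal is correct and takes the same route as the paper: the paper's proof of Theorem~\ref{dk_thm1} is literally the single sentence ``Exactly the same arguments as in the proof of Theorem~\ref{t-groegerrepr} can be applied,'' and you have simply unpacked that reference — interpolation scale for the product spaces, continuity of $\calA$ on each rung, Lax--Milgram at $p=2$ via (A2), then Sneiberg. Your extra care in checking that interpolation holds factor-by-factor for $W_{D_i}^{1,p}(\Omega)$ (using $\overline{\partial\Omega\setminus D_i}\subseteq\overline\Gamma$ to transport the local extension property) is a genuine detail the paper elides; it does rely on reading (A1) as positing the geometric conditions of Subsection~4.2 rather than merely the abstract Assumption~\ref{a-extendgeneral} for $D$ alone, but that is exactly the reading the paper's ``cf.\ Subsection 4.2'' invites.
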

\begin{proof}
 Exactly the same arguments as in the proof of Theorem \ref{t-groegerrepr} can
 be applied.
\end{proof}
If in addition the operator $\calA$  satisfies a certain symmetry relation,
then the interval $J$ can be determined uniformly for classes of coefficient
tensors satisfying uniform upper bounds and ellipticity properties.
\begin{itemize}
\item[(A3)] 
For all $u,v\in \bbW^{1,2}_D(\Omega)$ it holds
 $\langle\calA u,v \rangle =\overline{\langle\calA
  v,u\rangle}$.
\end{itemize}
\begin{theorem}
\label{dk_thm2}
Let (A1) be satisfied and let $\calM$ be a set of coefficient tensors
fulfilling (A2) and (A3) with a uniform upper $L^\infty$-bound and a common
lower bound for the ellipticity constant $\kappa$ in (A2). Then, there exists
an open interval $J_\calM$ containing $2$, such that for all $p \in J_\calM$
and all $\bbA\in \calM$ the corresponding operator $\calA$ is a topological
isomorphism between $\bbW^{1,p}_D(\Omega)$ and $\bbW_D^{-1,p}(\Omega)$.
Moreover, there exists a constant $c_\calM>0$ such that for all $f\in
\bbW_D^{-1,p}(\Omega)$ we have 
\begin{align}
\label{dk_est_thm1}
\sup 
\left\{\norm{\calA^{-1}(f)}_{\bbW^{1,p}(\Omega)}\, ;\, p\in
  J_\calM,\,\bbA\in \calM
 \right\} \leq c_\calM\norm{f}_{\bbW_D^{-1,p}(\Omega)}.
\end{align} 
\end{theorem}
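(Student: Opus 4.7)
The plan is to rerun the proof of Theorem~\ref{dk_thm1} while carefully tracking all constants, so as to extract uniformity over the family $\mathcal M$. The underlying mechanism is the quantitative form of Sneiberg's isomorphism theorem: the radius of the interval of parameters for which an operator on a Banach scale remains invertible is controlled explicitly by the operator norm and by the norm of its inverse at a single interior reference point.

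First I would record uniform bounds at the reference exponent $p=2$. The common upper $L^\infty$-bound on the coefficient tensors in $\mathcal M$ produces
\[
 \sup_{\mathbb A \in \mathcal M} \|\mathcal A\|_{\mathcal L(\bbW^{1,2}_D(\Omega); \bbW^{-1,2}_D(\Omega))} \le C_1,
\]
while (A2) combined with the Lax--Milgram lemma, using the common ellipticity constant $\kappa$, gives
\[
 \sup_{\mathbb A \in \mathcal M} \|\mathcal A^{-1}\|_{\mathcal L(\bbW^{-1,2}_D(\Omega); \bbW^{1,2}_D(\Omega))} \le \kappa^{-1}.
\]
Both bounds depend only on the data fixing the class $\mathcal M$.

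Second, I would invoke the quantitative version of Sneiberg's theorem (the same one applied in the scalar analogue, cf.\ Remark~\ref{r-notlarg}~\ref{r-notlarg:iv} and \cite{groe/reh}) on the complex Banach scales $\{\bbW^{1,p}_D(\Omega)\}_{p\in (1,\infty)}$ and $\{\bbW^{-1,p}_D(\Omega)\}_{p\in(1,\infty)}$, which are furnished componentwise by Theorem~\ref{t-1interpolgen} and Corollary~\ref{c-remaintrue-}. Because the radius of the resulting isomorphism interval depends only on the two bounds from the previous step, there exist $\delta>0$ and $C_2>0$ such that for all $p \in (2-\delta, 2+\delta)$ and all $\mathbb A \in \mathcal M$ the operator $\mathcal A : \bbW^{1,p}_D(\Omega) \to \bbW^{-1,p}_D(\Omega)$ is a topological isomorphism with $\|\mathcal A^{-1}\| \le C_2$. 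This already yields a common interval $J_\mathcal M$ and the uniform estimate.

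Finally, I would use the Hermitian symmetry (A3) to observe that the interval $J_\mathcal M$ can be arranged to be of the conjugate-symmetric form $(q/(q-1),q)$: with respect to the antidual pairing, (A3) identifies $\mathcal A : \bbW^{1,p}_D(\Omega) \to \bbW^{-1,p}_D(\Omega)$ with the adjoint of $\mathcal A : \bbW^{1,p'}_D(\Omega) \to \bbW^{-1,p'}_D(\Omega)$, so invertibility and the norm bound transfer from $p$ to $p'$ (and do so uniformly in $\mathbb A \in \mathcal M$). The main obstacle is genuinely located in Step~2, namely accessing the quantitative form of Sneiberg's theorem rather than the bare openness statement; this refinement is implicit in \cite{snei,vigna} and has already been made explicit in the scalar case in \cite{groe/reh}, so once the interpolation results for the spaces $\bbW^{1,p}_D(\Omega)$ are at hand the systems case requires only a bookkeeping of constants.
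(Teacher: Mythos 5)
Your approach via a quantitative version of Sneiberg's theorem is correct but genuinely different from the paper's proof. The paper instead adapts Gr\"oger's contraction-mapping argument: setting $\calP = \calL^*\calL$ and $\calQ_t = \calP^{-1}(\calP - t\calA)$, the symmetry hypothesis (A3) makes $\calQ_t$ self-adjoint on $\bbW^{1,2}_D(\Omega)$, so that $\norm{\calQ_t}_{\text{op},2}$ can be read off from the numerical range. With $t_0 = 2/(\kappa+M)$ one obtains $\norm{\calQ_{t_0}}_{\text{op},2} \le (M-\kappa)/(M+\kappa) < 1$; interpolation propagates the strict contraction to exponents $p$ near $2$, and a fixed-point argument delivers the uniform isomorphism property and the bound \eqref{dk_est_thm1}. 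Your route bypasses all of this by recording the uniform norm bound on the scale and the uniform inverse bound at $p=2$ and feeding them into a quantitative Sneiberg lemma.

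Two remarks, however. First, note that your argument does not use (A3) at all for the main conclusion; you only invoke it at the end to arrange a conjugate-symmetric interval, which the theorem does not ask for. This is in tension with the paper's remark following Theorem~\ref{dk_thm2}, which states that the authors ``do not see how the proof can be generalised'' without (A3) under mere Legendre-Hadamard positivity. The resolution is that the paper is comparing variants of the Gr\"oger pointwise/contraction argument, not the Sneiberg route, and the latter never needs pointwise positivity or symmetry. If your quantitative Sneiberg lemma is available, your argument is in fact stronger than the paper's theorem as stated. Second, your attribution of the quantitative Sneiberg estimate to \cite{groe/reh} is inaccurate: Gr\"oger and Rehberg execute exactly the contraction argument sketched above, not a Sneiberg-type stability estimate, and the present paper cites Sneiberg only in its qualitative (openness) form via \cite{snei,vigna,auschmem}. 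To make your proof self-contained you would need to state and cite (or prove) a quantitative Sneiberg lemma in which both the width of the isomorphism interval and the norm of the inverse depend only on the scale, the uniform operator bound, and the inverse bound at the interior reference point; this is not contained in the references you invoke and must be supplied separately.
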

\begin{rem}
In the case of scalar equations, i.e.\@ $m=1$, the previous theorem is also
valid for operators $\calA$ which do not satisfy (A3), (see
Remark~\ref{r-notlarg}~\ref{r-notlarg:iv}). Similar arguments as in the scalar
case can be applied to the vectorial case without assuming (A3)
provided that the coefficient tensor $\bbA_{22}$ satisfies \eqref{dk_e_posdef}
for all $B\in \C^{m\times d}$ and not only for $B=\xi\otimes \eta$. 
%
In this case, the proof of the uniform bound \eqref{dk_est_thm1} 
 relies on 
certain estimates that are derived using the positivity of the
coefficient-tensors 
 (see \cite{groe/reh}). In the general non-symmetric vector valued case, 
we do not see how the proof can be generalised,
  if only the weaker positivity \eqref{dk_e_posdef} is assumed. 
In the case studied in Theorem \ref{dk_thm2} 
 we  derive estimates for the corresponding
operators directly (and not pointwise for the coefficients) 
 and  use  the fact that for self-adjoint
operators on a Hilbert space $\bbH$ the operator norm is 
given by $\norm{T}_\text{op} = \sup \left\{\abs{\langle T a,a\rangle}\, ;
    \, a\in \bbH,\, \norm{a}\leq 1\right\}$. 
%
\end{rem}
\begin{proof}
Let $\calP:\bbW^{1,2}_D(\Omega) \rightarrow \bbW^{-1,2}_D(\Omega)$ be
defined by $\calP=\calL^*\calL$. 
 Due to Theorem \ref{dk_thm1} there exist
$q_0^*<2<q_1^*$ such that for all $p\in [q_0^*,q_1^*]$ the operator
$\calP$ is a topological isomorphism between $\bbW^{1,p}_D(\Omega)$
and $\bbW_D^{-1,p}(\Omega)$. This implies that for all $t>0$ and $p\in
[q_0^*,q_1^*]$ the operator $\calQ_t$, given by $\calQ_t =\calP^{-1}(\calP -
t\calA)$ is a bounded linear operator from $\bbW^{1,p}_D(\Omega)$ to $\bbW^{1,p}_D(\Omega)$. In a first step, we will show that there exist $t_0 >
0$ and $q_0,q_1\in [q_0^*,q_1^*]$ with $q_0<2<q_1$, such that
\begin{align}
\label{dk_est_p1}
\sup_{p\in [q_0,q_1]}\norm{\calQ_{t_0}}_{\text{op},p} \leq \iota<1,
\end{align}
where $\norm{\calQ_{t_0}}_{\text{op},p}$ denotes the operator norm 
with respect to the space $\bbW^{1,p}_{D}(\Omega)$. 
In the second step, the uniform estimate \eqref{dk_est_thm1}
 is derived from \eqref{dk_est_p1}.  

We start the investigation with $p=2$. 
Observe that the standard inner product on $\bbW^{1,2}_D(\Omega)$
satisfies $(u,v)_{1,2}= (\calL u, \calL
v)_{0,2} 
=\langle \calP(u),v\rangle $. Hence, by (A3)  
the following identities are valid for $u,v\in
\bbW^{1,2}_D(\Omega)$:
\begin{multline*}
(\calQ_t u, v)_{1,2}= \langle ( \calP - t \calA)u,v\rangle 
= \overline{\langle ( \calP - t \calA)v,u\rangle}
=\overline{(\calP^{-1} ( \calP - t \calA) v,u)_{1,2}} = (u, \calQ_t
  v)_{1,2}. 
\end{multline*}
This shows that  $\calQ_t$ is self adjoint on $\bbW^{1,2}_D(\Omega)$. 
Moreover, taking into account the 
 upper bound $M$ of the coefficient matrix $\bbA$ and the uniform
 ellipticity property, the following
 estimates are valid for all $u\in \bbW^{1,2}_D(\Omega)$:
\begin{align*}
(\calQ_t u,u)_{1,2} &= \langle (\calP - t\calA) u,u\rangle \geq
(1-tM)\norm{u}^2_{\bbW^{1,2}(\Omega)}\\
(\calQ_t u,u)_{1,2} &\leq 
(1-t\kappa)\norm{u}^2_{\bbW^{1,2}(\Omega)}.
\end{align*}
Thus, the operator norm  $\norm{\calQ_t}_{\text{op},2}$ with respect to
$\bbW^{1,2}_D(\Omega)$ can be estimated as
\begin{align*}
\norm{\calQ_t}_{\text{op},2} 
&= \sup\{ \abs{\langle \calQ_t u,u\rangle}\,;\, u\in
\bbW^{1,2}_D(\Omega),\, \norm{u}_{\bbW^{1,2}(\Omega)}\leq 1 \}\\
&\leq \max\{\abs{1- tM}, \abs{1-t\kappa}\}.  
\end{align*}
%
Hence,  the operator $\calQ_t$ is a strict 
contraction provided that $t\in {]0, 2 /M[}$. We choose now $t_0
=2/(\kappa +M)$ and define $\wt 
\iota = 1 - t_0\kappa=(M-\kappa)/(M+\kappa)$. With this, we have 
$\norm{\calQ_{t_0}}_{\text{op},2}\leq \wt \iota<1$. 

For $p\in [2,q_1^*]$, interpolation theory  gives  the estimate 
$\norm{\calQ_{t_0}}_{\text{op},p}\leq \wt\iota^{1-\theta} 
\norm{\calQ_{t_0}}_{\text{op},q_1^*}^\theta
$, 
where $1/p =(1-\theta)/2 + \theta/q_1^*$. Hence, there exist
$\iota_1\in {]0,1[}$ and $q_1\in {]2,q_1^*]}$ such that for all $p\in [2,q_1]$
it holds  $\norm{\calQ_{t_0}}_{\text{op},p}\leq \iota_1$. Similar arguments
applied to the interval $[q_0^*,2]$ finally imply \eqref{dk_est_p1}. 

Now, we  proceed analogously to the arguments in the proof of Theorem 1 in
\cite{groeger89}: 
Since the operator $\calQ_{t_0}$ is a contraction on
$\bbW^{1,p}_D(\Omega)$, for every $f\in \bbW_D^{-1,p}(\Omega)$ the
operator $v\mapsto \calQ_{t_0}(v)  + t_0\calP^{-1}f$ has a unique fixed point
$u_f$. Observe that $u_f$ satisfies $\calA u_f=f$. Hence, for all
$p\in[p_0,p_1]$ the operator $\calA$ is a topological isomorphism with
respect to $\bbW_D^{1,p}(\Omega)$. Finally, since 
\begin{align*}
\norm{u_f}_{\bbW^{1,p}(\Omega)} 
=\norm{\calQ_{t_0} u_f + t_0\calP^{-1}f}_{\bbW^{1,p}(\Omega)} 
\leq \iota \norm{u_f}_{\bbW^{1,p}(\Omega)} + t_0
c_{q_0^*,q_1^*}\norm{f}_{\bbW_D^{-1,p}(\Omega)},
\end{align*}
the operator norm of $\calA^{-1}$ is uniformly bounded on $[q_0,q_1]$,
which is \eqref{dk_est_thm1}. 
\end{proof}


\begin{exmp}
The equations of linear elasticity as well as the Cosserat-model fit
into this framework. 
In the case of linear elasticity, the vector-function
$u:\Omega\rightarrow \R^d$ (i.e.\@ $m=d$) denotes the displacement
field. Typically, the Dirichlet-boundary is the same for all
components of $u$. Hence, we define 
 $\bbW^{1,p}_D(\Omega)= \prod_{i=1}^d
W_D^{1,p}(\Omega)$, where $D\subset\partial\Omega$ is a closed
$(d-1)$-set. 
The operator of linear elasticity is defined through the
 form $\langle \calA u,v \rangle =\int_\Omega \bfC e(u) : e(v) \; \mathrm{d}
 \mathrm{x}$
 for $u,v\in \bbW^{1,2}_D(\Omega)$, Here, $e(u)=\frac{1}{2}(\nabla
 u + \nabla u^\top)$ is the symmetrised gradient and $\bfC\in
 L^\infty(\Omega; \text{Lin} (\R^{d\times d}_\text{sym},\R^{d\times
   d}_\text{sym}))$ denotes the fourth order elasticity tensor. 
It is assumed that $\bfC$ is symmetric and positive definite on the
symmetric matrices:  for all $F_1,F_2\in \R^{d\times
  d}_\text{sym}$ it holds 
\begin{align*}
\bfC F_1:F_2 =\bfC F_2:F_1,\quad \bfC F_1 :F_1\geq c_\kappa
\abs{F_1}^2.
\end{align*} 
In order to have Korn's second inequality at our disposal, in addition
to (A1) we assume that $\Omega$ is a Lipschitz domain.  Korn's second inequality states that 
the standard norm in $\bbW^{1,2}_D(\Omega)$ and the norm
$\abs{\norm{u}}:=\norm{u}_{L^2(\Omega)} + \norm{e(u)}_{L^2(\Omega)}$
are equivalent, cf.\@ \cite{ell:GS86} and the references therein. 
 Moreover, if $\mathcal H_{d-1}(D)>0$, then standard arguments
relying on the compact embedding of $\bbW^{1,2}_D(\Omega)$ in
$L^2(\Omega)$ show that also  Korn's first inequality is
valid and assumption (A2) is satisfied. 
 Hence, 
Theorems \ref{dk_thm1}  and \ref{dk_thm2} are applicable. 

In the Cosserat models, additionally to the displacement fields the
skew symmetric Cosserat-microrotation-tensor $R\in \R^{3\times
  3}_\text{skew}$  plays a role. 
 Via the relation 
\begin{align*}
\axl R:= \axl \left(\begin{smallmatrix} 0 & r_1 &r_2\\
    -r_1 & 0& r_3\\ -r2&-r3&0\end{smallmatrix}\right):=
\left(\begin{smallmatrix} -r_3 \\r_2 \\-r_1\end{smallmatrix}\right), 
\end{align*}
$\R^{3\times 3}_{\text{skew}}$ is identified with $\R^3$.  
Assume that $D_\text{el},\,
D_\text{R}\subset \partial\Omega$ are nonempty, closed $2$-sets
describing the Dirichlet boundary for the displacements and the tensor
$R$, respectively.  
The state space is defined as $\bbW^{1,p}_D(\Omega)= \prod_{i=1}^3
W_{D_\text{el}}^{1,p}(\Omega) \times \prod_{i=1}^3
W_{D_\text{R}}^{1,p}(\Omega)$.   
A typical differential operator occurring in the theory of Cosserat
models is given by the following weak form for $(u,R),(v,Q)\in
\bbW^{1,2}_D(\Omega)$: 
\begin{multline*}
\langle \calA \left(\begin{smallmatrix} u \\R\end{smallmatrix}
\right),  \left(\begin{smallmatrix} v \\ Q\end{smallmatrix}
\right)\rangle 
 = \int_\Omega 2\mu e(u ): e( v ) + \lambda \dive u \dive v
\\
+ 2\mu_c \dskew (\nabla u -R):\dskew (\nabla v -Q) 
+ \gamma  \nabla \axl R :   \nabla \axl Q \dx. 
\end{multline*}
If in addition to (A1) the domain is a Lipschitz domain and 
if for the  Lam{\'e}-constants $\lambda,\mu$, the Cosserat-couple modulus
$\mu_c$ and the parameter $\gamma$ it holds $\mu>0$, $2\mu + 3\lambda
>0$, $\mu_c\geq 0$ and $\gamma>0$, then condition (A2) is
satisfied, see  \cite{neff06,kne:NK08}, 
 where also more general situations are discussed. Obviously, (A3) is
 satisfied as well, and hence Theorems 
\ref{dk_thm1}  and \ref{dk_thm2} are applicable. 
\end{exmp}

\begin{rem}
We finally remark that on the basis of the previous example the
results from \cite{HMW11} for nonlinear elasticity models 
 can be extended to the situation discussed here by repeating the arguments in
\cite[Section 3]{HMW11}. 
\end{rem}

\section{Applications} \label{sec-appl}
In this chapter we intend to indicate possible applications, which were the
original motivation for this work.

It is more or less clear that the results of this paper cry for applications
 primarily in spatially two-dimensional elliptic/parabolic 
problems. We suggest that in almost all applications resting on
\cite{groeger89} the geometric conditions can be relaxed to those of this
paper, and the results still hold, (see e.g. \cite{liu/cheng/naka},
\cite{de/ge/ju}, \cite{juen}, \cite{miel}, 
\cite{con/mun}, \cite{fa/ga}, \cite{gli/hue}, \cite {ga/gr}, \cite{k/n/r},
 \cite{thermist} to name only a few).

Moreover, the generator property for an analytic semigroup gives the
opportunity to deal also with parabolic problems. When employing the main
result from \cite{TomJo} and then applying the classical semilinear theory,
see e.g.\@ \cite[Ch.~3]{henry}, one should be able to treat also semilinear
ones. Generally, the $W^{-1,q }_D$-calculus allows for right hand sides of the
equations which contain distributional objects as e.g.\@ surface densities
which still belong to the space $W^{-1,q}_D(\Omega)$. In particular, in the
$2d$-case one may even admit functions in time which take their values in the
space of Borel measures, since the space of these measures then continuously
embeds into any space $W^{-1,q}_D(\Omega)$ with $q<2$, compare also
\cite{Amannmeas}.

Moreover, the elliptic regularity result enables a simpler treatment of
problems which include quadratic gradient terms: the a priori knowledge
$\nabla u \in L^q$ with $q>2$ improves the standard information $|\nabla u|^2
\in L^1$ to $|\nabla u|^2 \in L^r$ with $r>1$ -- what makes the analysis of
such problems easier, compare \cite{thermist,KnRoZan11}.

At the end, let us sketch an idea how one can exploit the gain in elliptic
regularity in a rather unexpected direction: Let $q > 2$ be a number such that
\eqref{e-topiso} is a topological isomorphism and \eqref{e-topiso} is also a
topological isomorphism if $\mu$ is there replaced by the adjoint coefficient
function, then providing the adjoint operator in $L^2(\Omega)$. We abbreviate
$A := \nabla \cdot \mu \nabla|_{L^2(\Omega)} $. As in \cite{TomJo}
one can prove that the semigroup operators $e^{tA}$ possess kernels which admit upper
Gaussian estimates. Obviously, these kernels are bounded, and, consequently,
all semigroup operators are Hilbert-Schmidt and even nuclear. Consequently,
$e^{\frac {t}{3}A}:L^2(\Omega) \to L^2(\Omega)$ admits a representation
\[ e^{\frac{t}{3} A} \psi = \sum_j \lambda_j \langle \psi, f_j
	\rangle_{L^2(\Omega)} \; g_j
\]
with $\|f_j\|_{L^2(\Omega)} = \|g_j\|_{L^2(\Omega)} = 1$ and $\sum_j
|\lambda_j| < \infty$, see \cite[Thm.~1.b.3]{koenig}. Hence, $e^{{t}A}$ admits
the following representation via an integral kernel.
\begin{equation} \label{e-integrakern}
  e^{tA} = \sum_j \lambda_j \langle e^{\frac{t}{3} A} \cdot, f_j
	\rangle_{L^2(\Omega)} \; e^{\frac{t}{3} A} g_j = \sum_j \lambda_j
	\; e^{\frac{t}{3} A} g_j \otimes \overline{e^{\frac{t}{3} A^\star}f_j}.
\end{equation}
Let us estimate the $W^{1,q}$-norm of the elements $e^{\frac{t}{3} A} g_j$ and ${e^{\frac{t}{3} A^\star}f_j}$, respectively:
\[ \bigl\| e^{\frac{t}{3} A} g_j \bigr\|_{W^{1,q}_D(\Omega)} \le \|
	(-A + 1)^{-1} \|_{\mathcal L(L^q(\Omega);W^{1,q}_D(\Omega))} \bigl\|
	e^{\frac{t}{6} A} \bigr\|_{\mathcal L(L^2(\Omega);L^q(\Omega))} \bigl\|
	(-A + 1) e^{\frac{t}{6} A} \bigr\|_{\mathcal L (L^2(\Omega))},
\]
since $\|g_j\|_{L^2(\Omega)}=1$. Let us discuss the factors on the right hand
side: the first is finite due to our supposition on $q$ and the embedding
$L^q(\Omega) \hookrightarrow W^{-1,q}_D(\Omega)$. The second is finite because
the semigroup operators are integral operators with bounded kernels. The third
factor is bounded because $A$ generates an analytic semigroup on
$L^2(\Omega)$.

The estimate for $e^{\frac{t}{3} A^\star} f_j$ is quite analogous, this time
investing the continuity of $(-A^*+1)^{-1} : L^q(\Omega) \to
W^{1,q}_D(\Omega)$. Bringing now into play the summability of the series
$\sum_j |\lambda_j|$, one obtains the convergence of the series $\sum_j
\lambda_j \; e^{\frac{t}{3} A} g_j \otimes \overline{e^{\frac{t}{3} A^\star}
f_j}$ in the space $W^{1,q}(\Omega \times \Omega)$. Thus, the semigroup
operators have kernels which are even from $W^{1,q}(\Omega \times \Omega)$
with $q > 2$. We will discuss the consequences of this in a forthcoming paper.

\section{Concluding remarks}
\begin{rem} \label{r-anwend}
 \begin{enumerate}
  \item As the examples in Figure~\ref{fig-Meissel-1} and \ref{fig-Meissel}
	suggest, admissible constellations for the domain $\Omega$ are finite
	unions of (suitable) Lipschitz domains, whose closures have nonempty
	intersections. Thus, generically, the boundary is the finite union of
	$(d-1)$-dimensional Lipschitz manifolds with the corresponding
	boundary measures.
 \item The $W^{1,p}$-regularity result is also of use for the analysis of
	four-dimensional elliptic equations with right hand side from
	$W^{-1,p}_D(\Omega)$, $p >4$. Namely, the information that the
	solution a priori belongs to a space $W^{1,q}_D$ with $q > 2$, allows
	to localise the elliptic problem within the same class of right hand
	sides, cf.\@ \cite{h/m/r/s}.
 \end{enumerate}
\end{rem}
{\bf Acknowledgment}
In an ealier version of this paper we used Assumption \ref{a-extend} in order
to establish our geometric setting in view of the extension operator, cf.
\cite{TomJo}.
However, we learnt from a referee report on this paper that it should be possible to
extend this to more general settings.\\
Secondly, we wish to thank Moritz Egert (Darmstadt) for pointing out to us
Lemma \ref{l-moritz}.


\end{document}